\newtheorem{lemma}{Lemma}
\newtheorem{prop}{Proposition}
\newtheorem{theorem}{Theorem}
\newtheorem{corollary}{Corollary}
\newtheorem{remark}[theorem]{Remark}
\newtheorem{definition}{Definition}
\numberwithin{equation}{section}
\journal{}
\begin{document}
\nocite{*}
\begin{frontmatter}
\title{Solution space characterisation of perturbed linear discrete and continuous stochastic Volterra convolution equations: the $\ell^p$ and $L^p$ cases}

\cortext[cor1]{Corresponding author}

\author[1]{John A. D. Appleby\fnref{fn1}}
\ead{john.appleby@dcu.ie}

\author[2]{Emmet Lawless\corref{cor1}\fnref{fn2}}
\ead{elawless@umich.edu}

\affiliation[1]{organization={School of Mathematical Sciences, Dublin City University},
city={Dublin},
country={Ireland}
}
\affiliation[2]{organization={Department of Mathematics, University of Michigan},
city={Ann Arbor},
country={United States}
}

% \address{DCU Glasnevin Campus, Collins Avenue, Dublin 9, Ireland.}
\journal{ArXiv}

\fntext[fn1]{JA is supported by the RSE Saltire Facilitation Network on Stochastic Differential Equations: Theory, Numerics and Applications (RSE1832).}

\fntext[fn2]{This research was carried out while EL was affiliated with Dublin City University and was supported by Science Foundation Ireland (16/IA/4443). This author's current affiliation is the Department of Mathematics, University of Michigan, Ann Arbor, United States.}

\begin{abstract}
In this article, we are concerned with characterising when solutions of perturbed linear stochastic Volterra summation equations are almost surely $p$-summable and when their continuous time counterparts, perturbed linear stochastic Volterra integro-differential equations, are almost surely $p$-integrable. In the discrete case, we find it necessary and sufficient that perturbing functions are $p$-summable in order to ensure paths of the discrete equation are almost surely $p$-summable, while in the continuous case, it transpires one can have almost surely $p$-integrable sample paths with non-integrable perturbation functions. For the continuous equation, the main converse is clinched by considering an appropriate discretisation and applying results from the discrete case. We also conduct a thorough study of the asymptotic behaviour of the trajectories of solutions to the continuous equation in the regime of $p$-integrable paths and provide a characterisation of almost sure convergence to zero in the case of diagonal noise.  Additionally, we highlight how all proof methods can be applied to obtain stronger results for stochastic functional differential equations. 
\end{abstract}
\begin{keyword}
    Volterra Equation \sep Stochastic \sep Perturbations \sep Solution Space \sep Lebesgue Space 
    \MSC[2010]{60G17 \sep 60H10 \sep 45M05 \sep 39A50 \sep 34K50}
\end{keyword}
\end{frontmatter}

\section{Introduction}
Over the last four decades, a substantial literature has been developed concerning the qualitative behaviour of both deterministic and stochastic dynamical systems with memory. For systems with finite memory, the deterministic literature is summarised in classical texts such as \cite{Diekmann,KolMysh:1999(FDE)} while for the stochastic literature recent monographs by Mao and Shaikhet \cite{Mao:2008(Book),shaik:2013} have appeared. When considering deterministic Volterra equations, one can consult \cite{Cor90b,GLS}; however, to the best of our knowledge such a monograph for stochastic Volterra equations is yet to appear. In this article we endeavour to extend the results in \cite{AL:2023(AppliedMathLetters)} to the stochastic case which leads us to the following $\mathbb{R}^d$-valued stochastic Volterra integro-differential equation
\begin{equation} \label{eq. continuous SVE Intro}
    dX(t)  = \left(f(t)+\int_{[0,t]}\nu(ds)X(t-s)\right)dt+\sigma(t)dB(t), \quad t \geq 0.
\end{equation}
Here $\nu$ is understood to be an $\mathbb{R}^{d \times d}$-valued finite signed Borel measure, $\sigma$ an $\mathbb{R}^{d \times m}$-valued continuous function, $f$ an $\mathbb{R}^d$-valued continuous function, and $B$ an $\mathbb{R}^m$-valued standard Brownian motion.

The asymptotic behaviour of solutions to equations of type \eqref{eq. continuous SVE Intro} was first considered by Chan and Williams \cite{ChanWilliams:1989}. Following on from this work, an abundance of literature has appeared considering various types of behaviour both qualitative and quantitative. Asymptotic stability is considered in \cite{AP:2002(ECP),ACR:2011(DCDS),ApRie:2006(SAA)}, while subexponential and exponential asymptotic stability is explored in \cite{AP:2004(SubExpItoVol),AF:2003(EJP)}. Equations with multiplicative noise (both linear and non-linear) have also received due study \cite{AP:2021,Mao:2000(SAA),MaoRie:2006(SCL),ShaiRob:2011(DCDIS)}. Our primary goal is to provide necessary and sufficient conditions on the forcing terms $f$ and $\sigma$ in order to guarantee the trajectories of solutions to \eqref{eq. continuous SVE Intro} are almost surely $p$-integrable functions in time. Indeed, sufficient conditions are easily obtained, but to the best of our knowledge a complete characterisation has up to this point remained unknown.

The first barrier at hand is to characterise when solutions to the underlying deterministic equation (i.e., when $\sigma=0$) are elements of $L^p(\mathbb{R}_+;\mathbb{R}^d)$. This was carried out in the scalar case (with hints regarding the finite dimensional equation) by the authors in \cite{AL:2023(AppliedMathLetters)}. The condition generated does not impose any restrictions directly on $f$ but rather on a continuous linear functional applied to $f$, namely we must have for each component
\begin{equation} \label{eq. integral average lp f intro}
    t\mapsto \int_t^{t+\theta} f_i(s)ds \in L^p(\mathbb{R}_+;\mathbb{R}) \quad \text{ for all } \theta >0,
\end{equation}
to ensure solutions to the underlying deterministic equation are $p$-integrable for $p\geq1$. This extension to finite dimensions follows as a simple corollary to Theorem \ref{thm. L^p theorem continuous SVE} below. Turning our attention to $\sigma$, we see a dichotomy of cases; for $p\geq2$ one needs each component of the matrix to satisfy
\begin{equation} \label{eq. condition on sigma intro p>2}
    t \mapsto\int_{t}^{t+\theta} \sigma_{ij}^2(s)ds \in L^{\frac{p}{2}}(\mathbb{R}_+;\mathbb{R}) \quad \text{ for all } \theta>0,
\end{equation}
while for $p \in [1,2)$ we require
\begin{equation} \label{eq. condition on sigma intro p<2}
    n \mapsto\int_{n}^{n+1} \sigma_{ij}^2(s)ds \in \ell^{\frac{p}{2}}(\mathbb{N};\mathbb{R}).
\end{equation}
The border case $p=2$ arises from It\^o's isometry. If we make the standard assumption that the underlying resolvent of the measure $\nu$ is integrable\footnote{Recall this is equivalent to $\det[zI_{d\times d}-\hat{\nu}(z)]\neq 0 $ for $\text{Re}(z)\geq 0$, where $z \in \mathbb{C}$ and $\hat{\nu}$ denotes the Laplace transform of the measure $\nu$.}, then our main result shows that for $p\geq 1$ (together with any norm on $\mathbb{R}^d$), the solution to \eqref{eq. continuous SVE Intro} satisfies
\begin{equation} \label{eq. main thm intro}
   \|X(\cdot)\| \in L^p(\mathbb{R}_+;\mathbb{R})  \quad a.s. 
\end{equation}
if and only if $f$ satisfies \eqref{eq. integral average lp f intro} (componentwise) and each component of $\sigma$ obeys either \eqref{eq. condition on sigma intro p>2} or \eqref{eq. condition on sigma intro p<2}, depending on the value of $p$.

The utility of this characterisation is the fact that the perturbation functions may be very ill-behaved and yet conditions \eqref{eq. integral average lp f intro}, \eqref{eq. condition on sigma intro p>2} and \eqref{eq. condition on sigma intro p<2} may still be satisfied; some explicit examples were given by the authors in \cite{AL:2023(AppliedNumMath)} and \cite{AL:2023(AppliedMathLetters)} while a new example is provided in Section \ref{sec. Examples}. In the context of equation \eqref{eq. continuous SVE Intro}, characterising when a process has almost surely $p$-integrable sample paths is of interest due to the fact it is equivalent to the $p$-th mean of the process being integrable, i.e., \eqref{eq. main thm intro} is equivalent to $\int_0^\infty \mathbb{E}\|X(s)\|^pds < \infty$ (see Theorem \ref{thm. L^p theorem continuous SVE} below). Such a condition is often a first stop on the way to proving $\mathbb{E}\|X(t)\|^p \to 0$ as $t \to \infty$, which is of great interest in all domains where such models are employed in practice. Pairing this integrability condition with a pathwise regularity result on the mapping $t \mapsto \mathbb{E}\|X(t)\|^p$ such as Lipschitz (or uniform) continuity yields convergence to zero. For stochastic functional differential equations the classical approach is via construction of an appropriate Lyapunov functional which in turn gives the integrability of the $p$-th mean, this is outlined in \cite[Chapter 4]{KolNos:1986(SFDE)}.

The drawbacks of this method lie in the difficulty of constructing such a Lyapunov functional and the reward for such hard work is a sufficient condition for $p$-th mean integrability; necessary conditions are not easily obtained. The approach in this article is simpler and provides characterisations of $p$-integrable paths rather than generating sufficient conditions, our analysis is direct and focuses on the conditions \eqref{eq. integral average lp f intro}, \eqref{eq. condition on sigma intro p>2}, and \eqref{eq. condition on sigma intro p<2}. Additionally, we obtain precise information about the almost sure asymptotic behaviour of the paths (Theorem \ref{thm. pathwise behaviour of X}) without needing to analyse the path regularity of the $p$-th mean. In particular, we characterise almost sure convergence to zero in the special case of a diagonal diffusion matrix (Theorem \ref{thm. characterising convergence of X a.s. diagonal sigma}).

Although in this paper we are only concerned with an additive noise equation, the same methodology can be applied to study the mean square behaviour of equations with multiplicative noise where once again the use of Lyapunov functionals can be circumvented, see \cite{AL:2023(AppliedNumMath)}. To the best of our knowledge, the analysis of integrals of perturbation functions over compact intervals in the study of solutions to perturbed dynamical systems was first carried out in the papers of Strauss and Yorke \cite{SY67a,SY67b}. Our own first encounter with such objects was in the monograph by Gripenberg, Londen, and Staffans \cite[Lemma 15.9.2]{GLS}.

We make the observation that all results proven for equation \eqref{eq. continuous SVE Intro} are also true if one considers instead stochastic functional differential equations (SFDEs) of the form
\begin{align} \label{eq. SFDE Intro}
    dX(t)  &= \left(f(t)+\int_{[-\tau,0]}\mu(ds)X(t+s)\right)dt+\sigma(t)dB(t),  & t \geq 0; \nonumber\\  \quad X(t)&=\psi(t),  &t\in [-\tau,0].
\end{align}
Here, the measured-valued kernel $\mu$ is concentrated on the compact set $[-\tau,0]$ where $\tau >0$ is the fixed delay parameter and the initial data is a $C([-\tau,0];\mathbb{R}^d)$-valued random variable. We shall often write $X(t,\psi)$ when referring to solutions of \eqref{eq. SFDE Intro} to emphasize the dependence on the random initial function and to distinguish it from the solution of the Volterra equation \eqref{eq. continuous SVE Intro}. In fact, in this case we do not need any assumption regarding the integrability of the resolvent; this comes out as a necessary condition in the proof.

It transpires that in order to prove our main result, we require an auxiliary lemma regarding the summability of sequences of certain random variables which is lifted directly from the analysis of the discrete time analogue of \eqref{eq. continuous SVE Intro}. By this, we mean the $\mathbb{R}^d$-valued stochastic Volterra summation equation\footnote{We use the notation $X$ to represent solutions to both \eqref{eq. continuous SVE Intro} and \eqref{eq. Volterra difference equation Intro}, it should be clear by the context to which equation we are referring.}
\begin{equation} \label{eq. Volterra difference equation Intro}
    X(n+1) =X(n)+\sum_{j=0}^nK(n-j)X(j)+f(n) +\sigma(n)\xi(n+1), \quad n \in \mathbb{N},
\end{equation}
where $(\xi(n))_{n\geq1}$ is an independent and identically distributed (i.i.d.) sequence of $\mathbb{R}^m$-valued random vectors, $(f(n))_{n\geq0}$ and $(\sigma(n))_{n\geq0}$ are $\mathbb{R}^d$ and $\mathbb{R}^{d \times m}$-valued deterministic sequences respectively, and the kernel $(K(n))_{n\geq0}$ is a $\mathbb{R}^{d \times d}$-valued sequence with entries in $\ell^1(\mathbb{N};\mathbb{R})$.

Equations of this type have also drawn much attention from the literature. For essential background on the deterministic theory we refer the reader to \cite{Raff:2018(QVDE)}. Like in the stochastic case, there are many variations of \eqref{eq. Volterra difference equation Intro} one can study. For work focused on a fixed number of time lags, often mean square behaviour for systems with both additive and multiplicative noise is a priority \cite{AL:2023(ICDEA),Shai:1997(AML)}. The monograph by Shaikhet \cite{shaik:2011} provides an excellent overview for such equations. In the Volterra case asymptotic behaviour of the trajectories is considered for both linear deterministic equations \cite{AP:2017,Diblik:2011(AAA),GyoriReynolds:2010(periodicSolutions)}, and stochastic equations \cite{Ap-Rie-Rod:2009,BerRod:2006(JDEA)}.

Our results are of a qualitative flavour, i.e., in order for solutions of \eqref{eq. Volterra difference equation Intro} to obey $\|X\| \in \ell^p(\mathbb{N};\mathbb{R})$ almost surely, it is necessary and sufficient to have $\|f\|,\|\sigma\| \in \ell^p(\mathbb{N};\mathbb{R})$. There is a trade-off between what assumptions one makes about the noise $\xi$ and the structure one imposes on the matrix $\sigma$. For instance, one can allow the components of $\xi$ to follow essentially any arbitrary distribution (this is made precise by Definition \ref{def. large class of r.v's}) and depend on one another in any way one pleases, but this forces $\sigma$ to be diagonal. If one wishes to have a general matrix $\sigma$ then for the above result to hold we need to restrict ourselves to Gaussian sequences with independent components.

The layout of this article is as follows. In Section \ref{subsec. Notation and mathematical preliminaries} we discuss the notation and definitions necessary to study the discrete equation \eqref{eq. Volterra difference equation Intro}. Section \ref{subsec. Results for general noise} focuses on a general noise sequence without componentwise independence while Section \ref{subsec. Gaussian noise} handles the case of Gaussian noise. We then move on to study the continuous time equation, introducing notation in Section \ref{subsec. Notation}, the main results in Section \ref{subsec. Main Results}, and additional results regarding the asymptotic behaviour of sample paths in Section \ref{subsec. Almost sure asymptotic behaviour}.  In Section \ref{sec. SFDEs} we discuss stochastic functional equations before providing examples in Section \ref{sec. Examples} to emphasize the utility of theoretical results. Finally, we conclude with a discussion of results in Section \ref{sec. Conclusions}.

\section{Discrete results}

\subsection{Notation and mathematical preliminaries} \label{subsec. Notation and mathematical preliminaries}

We denote the standard basis vectors in $\mathbb{R}^d$ by $(\textbf{e}_1),\ldots,(\textbf{e}_d)$. The sum of all basis vectors in $\mathbb{R}^d$ will be denoted $\textbf{e}_{[d]}$. The standard inner product on $\mathbb{R}^d$ is denoted by $\langle \cdot,\cdot\rangle$. We say the $d\times m$ matrix-valued sequence $a:\mathbb{N} \to \mathbb{R}^{d\times m},\,  n \mapsto a(n)$ obeys $a \in \ell^p(\mathbb{N};\mathbb{R}^{d\times m})$ if each component sequence, $\left(a(n)\right)_{i,j} \in \ell^p(\mathbb{N};\mathbb{R})$ for $i=1,\ldots,d$ and $j=1,\ldots,m$, where $\left(a(n)\right)_{i,j} \coloneqq \langle \textbf{e}_i,a(n)\textbf{e}_j \rangle$. For scalar-valued objects we shall use the notation $|\cdot |$ to represent the standard euclidean norm on $\mathbb{R}$. We will use the common short hand $X \overset{\text{$d$}}{=} Y$, to denote when two random variables $X$ and $Y$ are equal in distribution and the notation $\chi$ to denote indicator random variables/functions. The abbreviation i.i.d. stands for independent and identically distributed.

We introduce the following class of real valued random variables which will be used in the proofs of a large proportion of converse results throughout this section.
\begin{definition} \label{def. large class of r.v's}
Let $\xi$ be an  $\mathbb{R}$-valued random variable. We say $\xi \in \mathbb{D}$ if there exists distinct, measurable,  and bounded Borel sets $B_1,B_2$ such that
\begin{itemize}
    \item[(i)]$\mathbb{P}(\xi \in B_i) >0$ for $i \in\{1,2\}$ with $p_1\coloneqq \mathbb{P}(\xi \in B_1)$ and $p_2 \coloneqq \mathbb{P}(\xi \in B_2)$,
    % \item[(ii)] $\mathbb{E}\xi  \chi_{\{\xi \in B_i\}} \neq 0$ for $i =1,2$ with $e_1\coloneqq \mathbb{E}\xi  \chi_{\{\xi \in B_1\}}$ and $e_2\coloneqq \mathbb{E}\xi  \chi_{\{\xi \in B_2\}}$;
    \item[(ii)] $p_2e_1-p_1e_2 \neq 0$,
\end{itemize}
where $e_i\coloneqq \mathbb{E}\xi  \chi_{\{\xi \in B_i\}}$ for $i \in \{1,2\}$.
\end{definition}

\begin{remark}
    The class of random variables introduced in Definition \ref{def. large class of r.v's} is certainly nonstandard. The definition is made in this manner mainly for technical reasons that appear in subsequent proofs, however upon reflection one realises that this class of random variables is very large indeed. It is the authors' opinion that after excluding random variables with singular laws, the only random variables that are not elements of the space $\mathbb{D}$ are the almost surely constant random variables. No proof is provided for this conjecture. Instead, we prove that if a random variable $\xi$ has a distribution with either an absolutely continuous part or at least two isolated atoms then $\xi \in \mathbb{D}$, see Proposition \ref{prop. examples of random variables} below, the proof of which is relegated to the appendix. 
\end{remark}
\begin{prop} \label{prop. examples of random variables}
Let $\xi$ be an $\mathbb{R}$-valued random variable whose distribution has either an absolutely continuous part or at least two isolated atoms (with non-zero probability), then $\xi \in \mathbb{D}$.
\end{prop}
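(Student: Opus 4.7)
The plan is to construct the two sets $B_1,B_2$ explicitly according to which hypothesis on the law of $\xi$ holds, noting throughout that condition (iii) is equivalent to the strict inequality of conditional means $\mathbb{E}[\xi \mid \xi \in B_1] \neq \mathbb{E}[\xi \mid \xi \in B_2]$, since
\[
p_2 e_1 - p_1 e_2 \;=\; p_1 p_2 \bigl(\mathbb{E}[\xi \mid \xi \in B_1] - \mathbb{E}[\xi \mid \xi \in B_2]\bigr).
\]
So the game is to pick $B_1,B_2$ so that these two conditional means differ while both $e_i$ remain nonzero.

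\textbf{Case 1 (two isolated atoms).} Let $a_1 \neq a_2$ be atoms with masses $q_i := \mathbb{P}[\xi = a_i] > 0$. If both atoms are nonzero, the singleton choice $B_i = \{a_i\}$ yields $p_i = q_i$, $e_i = a_i q_i \neq 0$, and $p_2 e_1 - p_1 e_2 = q_1 q_2 (a_1 - a_2) \neq 0$, so all three conditions hold. The only failure mode is when one atom sits at the origin, say $a_1 = 0$. A direct remedy is to choose $B_1 = \{a_2\}$ and $B_2 = \{0, a_2\}$, which are distinct and bounded. A direct computation gives $p_1 = q_2$, $p_2 = q_1 + q_2$, $e_1 = e_2 = a_2 q_2 \neq 0$, and $p_2 e_1 - p_1 e_2 = a_2 q_1 q_2 \neq 0$, so again all three conditions are satisfied.

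\textbf{Case 2 (absolutely continuous part).} Decompose the law $\mu_\xi = \mu_{\mathrm{ac}} + \mu_{\mathrm{s}}$ into its absolutely continuous and singular parts and let $f$ denote the density of $\mu_{\mathrm{ac}}$. Since $\mu_{\mathrm{ac}}(\mathbb{R}) > 0$ and almost every point of $\{f > 0\}$ is a Lebesgue point of $f$, we may select $x_0 \in \{f > 0\} \setminus \{0\}$ that is a Lebesgue point with $f(x_0) > 0$; without loss of generality $x_0 > 0$. Choosing $\varepsilon \in (0, x_0)$ small, both half-intervals $[x_0 - \varepsilon, x_0]$ and $[x_0, x_0 + \varepsilon]$ lie in $(0, \infty)$ and carry positive $\mu_{\mathrm{ac}}$-measure by the Lebesgue differentiation theorem. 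Setting $B_1 = [x_0 - \varepsilon, x_0]$, $B_2 = [x_0, x_0 + \varepsilon]$, conditions (i) and (ii) are immediate since $\mu_\xi \geq \mu_{\mathrm{ac}}$ (so $p_i > 0$) and $\xi > 0$ on $\{\xi \in B_i\}$ (so $e_i > 0$). For (iii), after possibly shrinking $\varepsilon$ further, the $\mu_{\mathrm{ac}}$-mass in each of the outer thirds $[x_0 - \varepsilon, x_0 - \varepsilon/2]$ and $[x_0 + \varepsilon/2, x_0 + \varepsilon]$ is positive, which forces $\mathbb{E}[\xi \mid \xi \in B_1] \leq x_0 - c$ and $\mathbb{E}[\xi \mid \xi \in B_2] \geq x_0 + c$ for some $c > 0$, giving the required strict inequality of conditional means and hence (iii).

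The main obstacles are essentially bookkeeping: in Case 1 one must avoid zero-atoms producing $e_i = 0$, which is handled by absorbing the zero-atom into one of the $B_i$'s while preserving the asymmetry $p_1 \neq p_2$; in Case 2 the subtle point is arranging conditional expectations to differ strictly, which requires a Lebesgue-point argument to witness non-degeneracy of $\mu_{\mathrm{ac}}$ on sub-intervals and the choice $x_0 \neq 0$ to keep $\xi$ sign-definite on $B_1 \cup B_2$. No information about $\mu_{\mathrm{s}}$ beyond the inequality $\mu_\xi \geq \mu_{\mathrm{ac}}$ is required.
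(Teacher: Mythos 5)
Your proof is correct, and it follows the same overall strategy as the paper's: in the atomic case take small sets around the two atoms, and in the absolutely continuous case take two adjacent intervals on one side of the origin inside the support of the density. The execution differs in two worthwhile ways. First, where the paper verifies condition (iii) by contradiction (assuming $p_2e_1=p_1e_2$ and deriving $\alpha=\beta$ in the atomic case, or $\int_a^t(t-x)f(x)\,dx=0$ in the continuous case after normalising so that $\mathbb{P}[\xi\in(a,t)]=\mathbb{P}[\xi\in(t,b)]$), you reformulate (iii) as the strict separation of the conditional means $\mathbb{E}[\xi\mid\xi\in B_1]\neq\mathbb{E}[\xi\mid\xi\in B_2]$ and verify it directly, using a Lebesgue point of the density to guarantee that both half-intervals and their outer portions carry positive mass. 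Second, you explicitly treat the degenerate configurations that threaten condition (ii): an atom sitting at the origin (absorbed into the larger set $B_2=\{0,a_2\}$ while keeping $p_1\neq p_2$) and an interval straddling the origin (avoided by choosing $x_0\neq 0$ and $\varepsilon<|x_0|$ so that $\xi$ is sign-definite on $B_1\cup B_2$). The paper's argument is slightly shorter but silently assumes both atoms are nonzero and does not check that $\mathbb{E}[\xi\chi_{\{\xi\in(a,t)\}}]\neq 0$ when $(a,t)$ may contain the origin; your version closes those gaps at the cost of a little extra bookkeeping.
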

\subsection{Results for general noise} \label{subsec. Results for general noise}

Let $(\Omega, \mathcal{F},\mathbb{P})$ be a complete probability space. We study the following $\mathbb{R}^d$-valued stochastic Volterra summation equation

\begin{equation} \label{eq. Volterra difference equation}
    X(n+1) =X(n)+\sum_{j=0}^nK(n-j)X(j)+f(n) +\sigma(n)\xi(n+1), \quad n \in \mathbb{N}; \quad X(0) = x. 
\end{equation}
Here $x \in \mathbb{R}^d$, $K$ and $\sigma$ are deterministic $\mathbb{R}^{d \times d}$-valued sequences, and $f$ is a deterministic $\mathbb{R}^d$-valued sequence. Each $\xi(n)$ is a random vector in $\mathbb{R}^d$. We make the following standing assumptions throughout this subsection:
\begin{align}
    & (\sigma(n))_{n\geq0} \text{ is a sequence of diagonal matrices,} \label{assum. sigma(n) is diagonal}\\
    & (\xi(n))_{n\geq1} \text{ is an independent and identically distributed sequence of random vectors,} \label{assum. xi is an i.i.d sequence of r.v}\\
    & \text{For all } n \in \mathbb{Z}_+,\, \langle \xi(n), \textbf{e}_j\rangle \overset{\text{$d$}}{=} \eta_j  \text{ where } \eta_j \in \mathbb{D}, \text{ for } j=1,\ldots,d. \label{assum. each component of xi(n) is equal in distribution to xi in D}
\end{align}
We assume our probability space is rich enough to support the sequence of random vectors $(\xi(n))_{n\geq1}$ and the random vector $\eta$  and equip it with the natural filtration generated by $\xi(n)$, i.e.,  $\mathcal{F}_n=\sigma\left(\xi(k): 1\leq k \leq n \right)$ with the convention that $\mathcal{F}_0=\{\emptyset,\mathcal{B},\Omega\}$. Here $\mathcal{B}$ denotes the sigma algebra generated by $\eta$. It is worth noting some common assumptions that we are \emph{not} making. We do not require the random variables $\langle \xi(n), \textbf{e}_j\rangle$ and $\langle \xi(n), \textbf{e}_i\rangle$ for $i\neq j$ to be independent, nor do we require that they have the same distribution. We now introduce an auxiliary object which allows efficient study of equation \eqref{eq. Volterra difference equation}. Let the so-called resolvent be the $\mathbb{R}^{d\times d}$-valued solution to the following matrix equation
\begin{equation} \label{eq. Volterra difference resolvent}
    R(n+1) =R(n)+\sum_{j=0}^nK(n-j)R(j), \quad n \in \mathbb{N}; \quad R(0) =I_{d\times d}. 
\end{equation}
Thus, the well known variation of constants formula gives
\begin{equation} \label{eq. Volterra difference variation of constants}
    X(n) =R(n)X(0)+\sum_{j=1}^nR(n-j)\left[f(j-1) +\sigma(j-1)\xi(j)\right], \quad n \in \mathbb{Z}_+. 
\end{equation}
In the sequel we will always assume
\begin{equation} \label{assum. summability of kernel and resolvent}
K \in \ell^1(\mathbb{N};\mathbb{R}^{d\times d}); \quad R \in \ell^1(\mathbb{N};\mathbb{R}^{d\times d}).   
\end{equation}
This is a standard assumption one needs to make in order to infer any reasonable convergence results for solutions to \eqref{eq. Volterra difference equation}, see \cite{Ap-Rie-Rod:2009}.

\begin{theorem} \label{thm. lp characterisation of Volterra difference equation}
Let $p\in [1,\infty)$. Suppose $(\xi(n))_{n\geq1}$ obeys \eqref{assum. xi is an i.i.d sequence of r.v}-\eqref{assum. each component of xi(n) is equal in distribution to xi in D} and additionally that $\langle \xi(n),\textnormal{$\textbf{e}_j$} \rangle \in L^p(\Omega)$ for $j=1,\ldots,d$. Let $K$ and $R$ obey \eqref{assum. summability of kernel and resolvent}, $\sigma$ obey \eqref{assum. sigma(n) is diagonal}, and let $X$ be the solution of \eqref{eq. Volterra difference equation}. Then the following are equivalent:

\begin{itemize}
    \item[(i)] $f \in \ell^p(\mathbb{N},\mathbb{R}^d)$ and $\sigma \in \ell^p(\mathbb{N};\mathbb{R}^{d \times d})$,
    \item[(ii)] $X \in \ell^p(\mathbb{N},\mathbb{R}^d)$ almost surely.
\end{itemize}
\end{theorem}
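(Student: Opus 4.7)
For $(i)\Rightarrow(ii)$ I will use the variation of constants formula \eqref{eq. Volterra difference variation of constants}. Since $R\in\ell^1\subset\ell^p$ for $p\ge 1$, the term $R(n)\xi$ lies in $\ell^p$ almost surely, and the discrete Young convolution inequality gives $R*f\in\ell^p$ from $f\in\ell^p$. For the stochastic convolution, set $\eta(j)=\sigma(j-1)\xi(j)$; diagonality of $\sigma$ together with $\langle\xi(j),\textbf{e}_i\rangle\in L^p(\Omega)$ yields $\mathbb{E}\|\eta(j)\|^p\le C\|\sigma(j-1)\|^p$, so $\sum_j\|\eta(j)\|^p<\infty$ almost surely by Fubini, and a second application of Young's inequality gives $R*\eta\in\ell^p$ almost surely.

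For the converse direction $(ii)\Rightarrow(i)$, I will rearrange the equation as
\[
U(n):=X(n+1)-X(n)-\sum_{j=0}^n K(n-j)X(j)=f(n)+\sigma(n)\xi(n+1).
\]
If $X\in\ell^p$ a.s.\ then Young's inequality applied to $K*X$ (with $K\in\ell^1$) gives $U\in\ell^p$ a.s. The diagonality of $\sigma$ reduces the problem componentwise to $U_i(n)=f_i(n)+\sigma_{ii}(n)\xi_i(n+1)$, and the sequence $(U_i(n))_n$ is independent because $(\xi(n))_n$ is i.i.d.\ with deterministic coefficients. Hence $\sum_n|U_i(n)|^p<\infty$ almost surely is the convergence of an independent series of nonnegative random variables, and the classical equivalence yields
\[
\sum_{n\in\mathbb{Z}_+}\mathbb{E}\bigl[|U_i(n)|^p\wedge 1\bigr]<\infty.
\]

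The heart of the proof exploits Definition~\ref{def. large class of r.v's} to separate $f_i$ from $\sigma_{ii}$. With $B_1,B_2,p_j,e_j$ as in that definition and $A^j_n:=\{\xi_i(n+1)\in B_j\}$, linearity yields the $2\times 2$ system $\mathbb{E}[U_i(n)\chi_{A^j_n}]=p_j f_i(n)+e_j\sigma_{ii}(n)$ for $j=1,2$, whose determinant $p_1e_2-p_2e_1$ is nonzero by condition (iii); thus $f_i(n)$ and $\sigma_{ii}(n)$ are fixed linear combinations of the two conditional expectations. A preliminary step shows $f_i(n),\sigma_{ii}(n)\to 0$: $U_i(n)\to 0$ in probability together with the non-degeneracy of $\xi_i$ (implicit in $\xi_i\in\mathbb{D}$) rules out any subsequential limits $(a^*,b^*)\neq(0,0)$ for which $a^*+b^*\xi_i\overset{d}{=}0$. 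Once $f_i(n),\sigma_{ii}(n)\to 0$, for all $n$ large the estimate $|U_i(n)|\chi_{A^j_n}\le |f_i(n)|+M_j|\sigma_{ii}(n)|\le 1$, with $M_j=\sup_{x\in B_j}|x|$, gives the pointwise identity $|U_i(n)|^p\chi_{A^j_n}\wedge 1=|U_i(n)|^p\chi_{A^j_n}$. Combining this with the truncated summability above and applying Jensen's inequality yields $\sum_n|\mathbb{E}[U_i(n)\chi_{A^j_n}]|^p<\infty$, and solving the linear system gives $\sum_n(|f_i(n)|^p+|\sigma_{ii}(n)|^p)<\infty$ as required.

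The main obstacle is orchestrating Definition~\ref{def. large class of r.v's} with the truncation so that conditions (i)--(iii) genuinely separate the two unknowns $f_i(n),\sigma_{ii}(n)$ from the single observable $U_i(n)$: (i)--(ii) ensure that two distinct conditioning events each carry nontrivial first moment information, while (iii) is precisely the invertibility of the resulting $2\times 2$ system. A secondary delicate point is the preliminary assertion $f_i(n),\sigma_{ii}(n)\to 0$, which must be inferred from almost sure summability alone and without any a priori moment summability of the $U_i(n)$.
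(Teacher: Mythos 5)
Your proposal is correct and follows essentially the same route as the paper: the forward direction via variation of constants and Young's inequality, and the converse by isolating $f(n)+\sigma(n)\xi(n+1)$ in $\ell^p$ a.s., proving $f_i,\sigma_{ii}\to 0$, truncating on the events $\{\xi_i(n+1)\in B_j\}$, passing to expectations by a Kolmogorov-type theorem for independent series, applying Jensen, and inverting the $2\times 2$ system guaranteed by condition (iii) of Definition \ref{def. large class of r.v's}. The only cosmetic differences are that the paper establishes the null-sequence step by a two-sided Borel--Cantelli argument and a two-outcome evaluation rather than your distributional subsequence argument, and invokes the two-series test on the uniformly bounded truncated variables rather than the $\mathbb{E}[Z_n\wedge 1]$ criterion.
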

An interesting question to ask is whether $p$-summability is ever possible for the solution of \eqref{eq. Volterra difference equation} without the deterministic perturbations $f$ and $\sigma$ being $p$-summable. One might conjecture that this is possible but, upon inspection of the proof of the converse result of Theorem \ref{thm. lp characterisation of Volterra difference equation}, we see that this cannot be the case. We state this explicitly as a corollary whose proof follows directly from the proof of Theorem \ref{thm. lp characterisation of Volterra difference equation}.

\begin{corollary} \label{cor. lp summability converse}
Suppose $(\xi(n))_{n\geq1}$ obeys \eqref{assum. xi is an i.i.d sequence of r.v}-\eqref{assum. each component of xi(n) is equal in distribution to xi in D}. Let $K$ and $R$ obey \eqref{assum. summability of kernel and resolvent}, $\sigma$ obey \eqref{assum. sigma(n) is diagonal}, let $X$ be the solution of \eqref{eq. Volterra difference equation}, and $p \in [1,\infty)$. If $X \in \ell^p(\mathbb{N},\mathbb{R}^d)$ a.s. then $f \in \ell^p(\mathbb{N},\mathbb{R}^d)$ and $\sigma \in \ell^p(\mathbb{N};\mathbb{R}^{d \times d})$.
\end{corollary}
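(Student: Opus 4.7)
The corollary is stated as an immediate consequence of the proof of Theorem \ref{thm. lp characterisation of Volterra difference equation}, so the plan is to isolate the converse implication (ii)$\Rightarrow$(i) from that proof and verify it makes no use of the $L^p(\Omega)$ moment hypothesis on the noise components. This is precisely why Definition \ref{def. large class of r.v's} is built from two \emph{bounded} Borel sets $B_1, B_2$ rather than from integrability properties of $\xi$: bounded truncation supplies all the moment control one needs for free.

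I would proceed in three stages. First, rearrange \eqref{eq. Volterra difference equation} to isolate the data,
\[
f(n) + \sigma(n)\xi(n+1) \;=\; X(n+1) - X(n) - \sum_{j=0}^{n} K(n-j) X(j).
\]
Under the hypothesis $X \in \ell^p(\mathbb{Z}_+;\mathbb{R}^d)$ a.s., the shifted sequences $X(\cdot+1)$ and $X(\cdot)$ are a.s.\ in $\ell^p$, while the convolution $K\ast X$ is a.s.\ in $\ell^p$ by Young's inequality and the assumption $K \in \ell^1$ from \eqref{assum. summability of kernel and resolvent}. Consequently the combined sequence $n \mapsto f(n) + \sigma(n)\xi(n+1)$ lies in $\ell^p(\mathbb{Z}_+;\mathbb{R}^d)$ a.s. Second, the diagonality assumption \eqref{assum. sigma(n) is diagonal} decouples the problem componentwise: for each $i \in \{1,\ldots,d\}$,
\[
n \mapsto f_i(n) + \sigma_{ii}(n)\,\langle \xi(n+1),\mathbf{e}_i\rangle \in \ell^p(\mathbb{Z}_+;\mathbb{R}) \ \text{ a.s.,}
\]
with the real-valued noise terms $\langle \xi(n+1),\mathbf{e}_i\rangle$ i.i.d. and equidistributed with some $\xi_i \in \mathbb{D}$ by \eqref{assum. xi is an i.i.d sequence of r.v}--\eqref{assum. each component of xi(n) is equal in distribution to xi in D}.

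Third, and this is where the structure of $\mathbb{D}$ does all the work, I would disentangle $f_i$ and $\sigma_{ii}$ from the mixed sequence using the two sets $B_1, B_2$. Define the truncated auxiliary sequences
\[
V_n^{(k)} \;:=\; \bigl(f_i(n) + \sigma_{ii}(n)\,\langle \xi(n+1),\mathbf{e}_i\rangle\bigr)\,\chi_{\{\langle \xi(n+1),\mathbf{e}_i\rangle \in B_k\}}, \qquad k=1,2.
\]
These are pathwise dominated by the $\ell^p$-a.s.\ compound sequence, they are bounded on the event where they are nonzero (since $B_k$ is bounded), and the family $\{V_n^{(k)}\}_n$ is independent in $n$. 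A direct computation gives $\mathbb{E}[V_n^{(k)}] = p_k f_i(n) + e_k \sigma_{ii}(n)$, yielding a $2 \times 2$ linear system in $(f_i(n),\sigma_{ii}(n))$ whose determinant is $p_1 e_2 - p_2 e_1 \neq 0$ by condition (iii). Pairing the bounded-truncation summability with a Kolmogorov three-series / independence argument to upgrade a.s.\ $\ell^p$ control to summability of $|\mathbb{E}[V_n^{(k)}]|^p$ (or a suitable centred variant thereof), the invertible linear system then transfers this to separate $\ell^p$ summability of $f_i$ and $\sigma_{ii}$.

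The main obstacle is precisely this last stage: extracting two independent pieces of information about the deterministic sequences $f_i$ and $\sigma_{ii}$ from a single a.s.\ summability statement on $f_i + \sigma_{ii}\xi_i(\cdot+1)$, without access to any global moment bound on $\xi_i$. The reason the argument succeeds is structural rather than probabilistic: bounded truncation via $B_1, B_2$ restores moment control on the truncated sequences automatically, while the condition $p_2 e_1 - p_1 e_2 \neq 0$ in Definition \ref{def. large class of r.v's} ensures the two resulting truncated problems are not collinear and therefore uniquely determine the two unknown sequences. Since none of these ingredients appeals to the extra moment assumption of Theorem \ref{thm. lp characterisation of Volterra difference equation}, the corollary follows.
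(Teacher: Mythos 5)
Your overall route is the same as the paper's: rearrange the difference equation so that $f(n)+\sigma(n)\xi(n+1)$ is expressed through $X$, use $K\in\ell^1$ and $X\in\ell^p$ a.s.\ to conclude this compound sequence is a.s.\ in $\ell^p$, decouple componentwise via diagonality, and then feed the scalar problem into the truncation-by-$B_1,B_2$ / Kolmogorov / nondegenerate-determinant machinery (this is exactly Lemma \ref{lem. summability converse}, which indeed makes no use of the $L^p(\Omega)$ moment hypothesis, so your observation that the corollary needs no moment assumption is correct).

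However, there is a genuine gap in your third stage. You assert that the truncated variables $V_n^{(k)}$ are ``bounded on the event where they are nonzero (since $B_k$ is bounded)'' and that ``bounded truncation supplies all the moment control one needs for free.'' The truncation only controls the noise factor; it gives $|V_n^{(k)}|\leq |f_i(n)|+|\sigma_{ii}(n)|\sup_{x\in B_k}|x|$, a bound that depends on $n$ and is \emph{not} uniform unless you already know $f_i$ and $\sigma_{ii}$ are bounded sequences. Uniform boundedness is essential for the Kolmogorov two-series (or three-series) step: for independent non-negative summands without a uniform bound, a.s.\ convergence of $\sum_n Z_n$ does not imply $\sum_n\mathbb{E}[Z_n]<\infty$ (take $Z_n=n^2$ with probability $n^{-2}$ and $0$ otherwise). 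The paper's Lemma \ref{lem. summability converse} spends its entire first half closing exactly this hole: a two-sided Borel--Cantelli argument shows $f(n)+\sigma(n)\xi\to 0$ a.s.\ for a fixed copy $\xi$, then evaluating at two outcomes $\omega_1,\omega_2$ with $\xi(\omega_1)\neq\xi(\omega_2)$ forces $\sigma(n)\to 0$ and hence $f(n)\to 0$, so both sequences are bounded and the truncated variables are uniformly bounded. You need to insert this preliminary step (or some substitute for it) before the two-series test can be invoked; everything after that point in your sketch then goes through as in the paper.
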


Corollary \ref{cor. lp summability converse} implies that a ``\emph{stabilisation by noise}" effect cannot occur. It seems the only way such a phenomenon can occur is if one relaxes the identically distributed assumption on the noise sequence to allow the possibility of $\xi(n) \to 0$ in an appropriate sense. Before proving these results we prepare a lemma.

\begin{lemma} \label{lem. summability converse}
    Let $(f(n))_{n\geq0}$ and $(\sigma(n))_{n\geq0}$ be deterministic scalar sequences and $(\xi(n))_{n\geq1}$ be an i.i.d. sequence of scalar random variables with $\xi(n)\overset{\text{$d$}}{=} \eta$ where $\eta \in \mathbb{D}$. If for any $p \in [1,\infty)$
    \begin{equation} \label{eq. p summability of perturbation}
        \sum_{n=0}^\infty \left|f(n)+\sigma(n)\xi(n+1) \right|^p < \infty \quad \text{a.s.},
    \end{equation}
    then $f,\sigma \in \ell^p(\mathbb{N};\mathbb{R})$.
\end{lemma}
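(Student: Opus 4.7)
The plan is to denote $Y_n := f(n) + \sigma(n)\xi(n+1)$ and to recover $f(n)$ and $\sigma(n)$ separately by computing, for the two sets $B_1, B_2$ witnessing $\xi \in \mathbb{D}$ and $A_i(n) := \{\xi(n+1) \in B_i\}$,
\begin{equation*}
m_n^{(i)} := \mathbb{E}[Y_n \chi_{A_i(n)}] = f(n)\, p_i + \sigma(n)\, e_i, \quad i = 1, 2.
\end{equation*}
The determinant condition $p_2 e_1 - p_1 e_2 \neq 0$ makes the associated $2 \times 2$ linear system invertible, so once $\sum_n |m_n^{(i)}|^p < \infty$ is established for $i = 1, 2$, the conclusion $f, \sigma \in \ell^p(\mathbb{Z}_+)$ follows by solving for $f(n)$ and $\sigma(n)$ in terms of $m_n^{(1)}, m_n^{(2)}$.

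The main obstacle is that the hypothesis provides only almost-sure summability of $|Y_n|^p$, not any a priori moment bound. To bridge this I would first establish that $(|f(n)| + |\sigma(n)|)$ is a bounded sequence, arguing by contradiction along a subsequence $n_k$ with $|f(n_k)| + |\sigma(n_k)| \to \infty$. If $(\sigma(n_k))$ stays bounded then $|f(n_k)| \to \infty$, so on $A_1(n_k)$ one has $|Y_{n_k}| \geq |f(n_k)| - K M_1 \to \infty$; since $\sum_k \mathbb{P}[A_1(n_k)] = \infty$ and the events are independent across $k$, Borel--Cantelli forces $A_1(n_k)$ to occur infinitely often almost surely, contradicting the a.s.\ conclusion $Y_n \to 0$ drawn from $\sum_n |Y_n|^p < \infty$ a.s. Otherwise $|\sigma(n_k)| \to \infty$; setting $c_k := -f(n_k)/\sigma(n_k)$, the convergence $Y_n \to 0$ in probability yields $\mathbb{P}[|\xi - c_k| > \delta] \to 0$ for every $\delta > 0$. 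Tightness of $\xi$ forces $(c_k)$ bounded, so along a further subsequence $c_k \to c$ and hence $\xi \equiv c$ almost surely; but then $e_i = c\, p_i$, giving $p_2 e_1 - p_1 e_2 = 0$ and contradicting $\xi \in \mathbb{D}$.

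Once $f$ and $\sigma$ are bounded, the variables $X_n^{(i)} := |Y_n|^p \chi_{A_i(n)}$ are independent in $n$, nonnegative, and uniformly bounded (since $B_i$ is bounded). The hypothesis gives $\sum_n X_n^{(i)} < \infty$ almost surely, and Kolmogorov's criterion for independent nonnegative summands ($\sum X_n < \infty$ a.s.\ $\iff$ $\sum \mathbb{E}[X_n \wedge 1] < \infty$) combined with the uniform bound upgrades this to $\sum_n \mathbb{E}[X_n^{(i)}] < \infty$.

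Finally, conditional Jensen on the positive-probability event $A_i(n)$ yields
\begin{equation*}
\mathbb{E}\bigl[|Y_n|^p \chi_{A_i(n)}\bigr] = p_i \, \mathbb{E}\bigl[|Y_n|^p \bigm| A_i(n)\bigr] \geq p_i \bigl| \mathbb{E}[Y_n \mid A_i(n)] \bigr|^p = p_i^{1-p} |m_n^{(i)}|^p,
\end{equation*}
so summing gives $\sum_n |m_n^{(i)}|^p < \infty$ for $i = 1, 2$, and inverting the linear system delivers $f, \sigma \in \ell^p(\mathbb{Z}_+)$. The crux is the boundedness step, which hinges precisely on the characterising properties of $\mathbb{D}$; the passage from pathwise summability to a controllable moment via Kolmogorov is then essentially routine once the uniform bound is in place.
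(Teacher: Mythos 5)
Your proposal is correct and follows essentially the same route as the paper: truncate $|f(n)+\sigma(n)\xi(n+1)|^p$ on the events $\{\xi(n+1)\in B_i\}$, upgrade almost-sure summability to summability of expectations via Kolmogorov's criterion for independent, uniformly bounded, nonnegative summands, apply Jensen to extract $\sum_n|p_i f(n)+e_i\sigma(n)|^p<\infty$, and invert the $2\times 2$ system using $p_2e_1-p_1e_2\neq0$. The only divergence is the preliminary boundedness step, where the paper deduces $\sigma(n)\to0$ (and hence $f(n)\to0$) directly by evaluating the a.s.\ limit $f(n)+\sigma(n)\xi\to0$ at two outcomes with distinct values of $\xi$, whereas you run a subsequence/tightness contradiction ending in $\xi$ being a.s.\ constant and the determinant vanishing; both are valid.
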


\begin{proof}[Proof of Lemma \ref{lem. summability converse}]
First we show that \eqref{eq. p summability of perturbation} implies both $f$ and $\sigma$ are null sequences and hence bounded. We note that \eqref{eq. p summability of perturbation} implies that $f(n)+\sigma(n)\eta  \overset{n \to \infty}{\longrightarrow} 0$ almost surely, to see this consider the following argument.

By \eqref{eq. p summability of perturbation} we have for all $\varepsilon > 0$, $\mathbb{P}\left[ |f(n)+\sigma(n)\xi(n+1)|> \varepsilon, \text{ i.o.}\right]=0$. Now fix an $\varepsilon >0 $ and suppose $\mathbb{P}\left[ |f(n)+\sigma(n)\eta|> \varepsilon, \text{ i.o.}\right]>0$. The first Borel-Cantelli lemma implies
\[
\sum_{n=0}^\infty \mathbb{P}\left[ |f(n)+\sigma(n)\eta|> \varepsilon \right] = \infty,
\]
which by the equality in distribution of $\xi(n)$ and $\eta$, forces
\[
\sum_{n=0}^\infty \mathbb{P}\left[ |f(n)+\sigma(n)\xi(n+1)|> \varepsilon \right] = \infty.
\]
Then leveraging the independence of the $\xi(n)$'s, we apply the second Borel-Cantelli lemma to give
\[
\mathbb{P}\left[ |f(n)+\sigma(n)\xi(n+1)|> \varepsilon, \text{ i.o.}\right]=1,
\]
which is a contradiction and so $\mathbb{P}\left[ |f(n)+\sigma(n)\eta|> \varepsilon, \text{ i.o.}\right]=0$. As $\varepsilon$ was arbitrary, this yields $f(n)+\sigma(n)\eta  \overset{n \to \infty}{\longrightarrow} 0$ almost surely. Fix $\omega_1$ and $\omega_2$ such that $\eta(\omega_1)\neq \eta(\omega_2)$, which can always be done as $\eta \in \mathbb{D}$. Thus,
\begin{align*}
    (f(n)+\sigma(n)\eta(\omega_1))-(f(n)+\sigma(n)\eta(\omega_2)) \to 0,
\end{align*}
which yields
\begin{equation*}
    \sigma(n)(\eta(\omega_1)-\eta(\omega_2)) \to 0.
\end{equation*}
As $\eta(\omega_1)-\eta(\omega_2) \neq 0$, this implies $\sigma$ is a null sequence which forces $f$ to be a null sequence and so they are both bounded. Now let $B_1$ and $B_2$ be the two Borel sets which come from Definition \ref{def. large class of r.v's} and introduce the sequences
\begin{align*}
    X_n & \coloneqq \left|f(n)+\sigma(n)\xi(n+1) \right|^p  \chi_{\{\xi(n+1) \in B_1\}},\\
    Y_n & \coloneqq \left|f(n)+\sigma(n)\xi(n+1) \right|^p  \chi_{\{\xi(n+1) \in B_2\}}.
\end{align*}
The boundedness of $f$ and $\sigma$ along with the truncation of the noise sequence $\xi(n)$ means $X_n$ and $Y_n$ are in fact almost surely uniformly bounded random variables. Clearly from \eqref{eq. p summability of perturbation} we have that $X_n, Y_n \in \ell^1(\mathbb{N})$ almost surely, this along with uniform boundedness allows us to apply Kolmogorov's two-series test which yields
\begin{equation*}
    \sum_{n=0}^\infty\mathbb{E}X_n < \infty, \quad \sum_{n=0}^\infty\mathbb{E}Y_n < \infty.
\end{equation*}
Now focusing on the first series we see
\begin{align*}
    \infty & > \sum_{n=0}^\infty\mathbb{E}\left[\left|f(n)+\sigma(n)\xi(n+1) \right|^p  \chi_{\{\xi(n+1) \in B_1\}}\right]\\
    & = \sum_{n=0}^\infty\mathbb{E}\left[\left|f(n)+\sigma(n)\eta \right|^p  \chi_{\{\eta \in B_1\}}\right]\\
    & \geq \sum_{n=0}^\infty\left| \mathbb{E}\left[f(n) \chi_{\{\eta \in B_1\}}+\sigma(n)\eta  \chi_{\{\eta \in B_1\}}\right]\right|^p\\
    & = \sum_{n=0}^\infty \left|p_1f(n)+e_1\sigma(n) \right|^p,
\end{align*}
where the inequality comes from Jensen as $p\geq 1$ and $p_1,e_1$ are constants from Definition \ref{def. large class of r.v's}. An identical argument using $\sum_{n=0}^\infty\mathbb{E}Y_n$ also yields
\begin{equation*}
    \sum_{n=0}^\infty \left|p_2f(n)+e_2\sigma(n) \right|^p < + \infty.
\end{equation*}
Next we leverage the fact that $p_2e_1-p_1e_2 \neq 0$. Consider
\begin{align*}
    \sigma(n)(p_2e_1-p_1e_2) & = \sigma(n)(p_2e_1-p_1e_2)+p_1p_2f(n)-p_1p_2f(n)\\
    & = p_2(p_1f(n)+e_1\sigma(n))-p_1(p_2f(n)+e_2\sigma(n)).
\end{align*}
Thus,
\begin{equation*}
    |p_2e_1-p_1e_2|^p \sum_{n=0}^\infty |\sigma(n)|^p \leq |p_2|^p\sum_{n=0}^\infty\left|p_1f(n)+e_1\sigma(n) \right|^p + |p_1|^p \sum_{n=0}^\infty \left|p_2f(n)+e_2\sigma(n) \right|^p.
\end{equation*}
As both series on the right are finite we have $\sigma \in \ell^p(\mathbb{N};\mathbb{R})$. A similar argument also shows $f \in \ell^p(\mathbb{N};\mathbb{R})$ and the lemma is proven.
\end{proof}

\begin{proof}[Proof of Theorem \ref{thm. lp characterisation of Volterra difference equation}]
To prove the forward implication it is enough to use \eqref{eq. Volterra difference variation of constants} and show $f(n)+\sigma(n)\xi(n+1)$ is an element of $\ell^p(\mathbb{N};\mathbb{R}^d)$ almost surely, as $\ell^p(\mathbb{N};\mathbb{R}^d)$ is closed under convolution with $\ell^1(\mathbb{N};\mathbb{R}^{d\times d})$ sequences (and $R \in \ell^1(\mathbb{N};\mathbb{R}^{d\times d})$ by Assumption \eqref{assum. summability of kernel and resolvent}). This amounts to showing the component sequence $\langle f(n)+\sigma(n)\xi(n+1),\textbf{e}_i \rangle \in \ell^p(\mathbb{N};\mathbb{R})$ almost surely for an arbitrary $i \in \{1,\ldots,d\}$. The triangle inequality and $\langle \xi(n),\textbf{e}_i \rangle \in L^p(\Omega)$ yield
\[
\mathbb{E}\left[\left| \langle f(n)+\sigma(n)\xi(n+1),\textbf{e}_i \rangle \right|^p\right]=\mathbb{E}\left|f_i(n)+\sigma_{ii}(n)\xi_i(n+1) \right|^p \leq \left|f_i(n)\right|^p+C_p\left|\sigma_{ii}(n)\right|^p.
\]
Invoking $(i)$ yields
\[
\sum_{n=0}^\infty \mathbb{E}\left[\left| \langle f(n)+\sigma(n)\xi(n+1),\textbf{e}_i \rangle \right|^p\right] < \infty,
\]
which implies
\begin{equation*}
  \sum_{n=0}^\infty \left| \langle f(n)+\sigma(n)\xi(n+1),\textbf{e}_i \rangle \right|^p < \infty \quad \text{a.s}.  
\end{equation*}
For the reverse implication we use the identity
\[
X(n+1)-X(n)-\sum_{j=0}^nK(n-j)X(j) = f(n) +\sigma(n)\xi(n+1).
\]
As $K \in \ell^1(\mathbb{N};\mathbb{R}^{d\times d})$ and $X \in \ell^p(\mathbb{N};\mathbb{R}^{d})$ almost surely, the right hand side must also be an element of $\ell^p(\mathbb{N};\mathbb{R}^{d})$ almost surely. Thus, for each $i \in \{1,\ldots,d\}$
\[
\sum_{n=0}^\infty \left|f_i(n)+\sigma_{ii}(n)\xi_i(n+1) \right|^p < \infty \quad \text{ a.s}.
\]
Thus, Lemma \ref{lem. summability converse} implies $f_i, \sigma_{ii} \in \ell^p(\mathbb{N};\mathbb{R})$ and so $f \in \ell^p(\mathbb{N},\mathbb{R}^d)$ and $\sigma \in \ell^p(\mathbb{N};\mathbb{R}^{d \times d})$.
    
\end{proof}

\subsection{Gaussian noise} \label{subsec. Gaussian noise}
If we specialise to the case where $(\xi(n))_{n\geq1}$ is an i.i.d. sequence of Gaussian random vectors with zero mean and independent components, then we can relax the assumption of $\sigma$ being a diagonal matrix. From now on $\sigma$ is an $\mathbb{R}^{d\times m}$ valued sequence and the noise vector $\xi$ will take values in $\mathbb{R}^m$ for some $m\geq d$.  Introduce the following assumptions

\begin{align}
    & (\xi(n))_{n\geq1} \text{ is an i.i.d sequence of } \mathbb{R}^m \text{-valued Gaussian random vectors with zero mean.} \label{assum. xi is an i.i.d sequence of r.v gaussian case}\\
    & \text{For all } n \in \mathbb{N},\,\langle \xi(n),\textbf{e}_i \rangle \text{ is independent of } \langle \xi(n),\textbf{e}_j \rangle \text{ for all } i\neq j. \label{assum. each component of xi(n) is independent gaussian case}
\end{align}

\begin{theorem} \label{thm. lp characterisation of Volterra difference equation gaussian noise case}
Let $p \in [1,\infty)$. Suppose $(\xi(n))_{n\geq1}$ obeys \eqref{assum. xi is an i.i.d sequence of r.v gaussian case}-\eqref{assum. each component of xi(n) is independent gaussian case}, $K$ and $R$ obey \eqref{assum. summability of kernel and resolvent}, and $X$ is the solution of \eqref{eq. Volterra difference equation}. Then the following are equivalent:

\begin{itemize}
    \item[(i)] $f \in \ell^p(\mathbb{N},\mathbb{R}^d)$ and $\sigma \in \ell^p(\mathbb{N};\mathbb{R}^{d \times m})$,
    \item[(ii)] $X \in \ell^p(\mathbb{N},\mathbb{R}^d)$ almost surely.
\end{itemize}
\end{theorem}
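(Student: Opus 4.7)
The plan is to follow the same skeleton as the proof of Theorem \ref{thm. lp characterisation of Volterra difference equation}, with the key new ingredient being a reduction of the vector noise $\sigma(n)\xi(n+1)$ to a scalar Gaussian so that Lemma \ref{lem. summability converse} may be applied row by row. For the implication (i)$\Rightarrow$(ii), I would invoke the variation of constants formula \eqref{eq. Volterra difference variation of constants} together with the closedness of $\ell^p$ under convolution with $\ell^1$ sequences, so that it suffices to verify $f(n)+\sigma(n)\xi(n+1)\in\ell^p(\mathbb{Z}_+;\mathbb{R}^d)$ almost surely. Fixing a row $i$, the sum $\eta_i(n+1):=\sum_{k=1}^m \sigma_{ik}(n)\xi_k(n+1)$ is a centred Gaussian of variance $s_i(n)^2:=\sum_{k=1}^m \sigma_{ik}^2(n)\,\text{Var}(\xi_k(1))$, so its $p$-th absolute moment is a universal constant multiple of $s_i(n)^p$. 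Because $\sigma\in\ell^p$, elementary power-mean inequalities (splitting $p\geq 2$ and $1\leq p<2$ according to convexity or subadditivity of $x\mapsto x^{p/2}$ on $[0,\infty)$) force $s_i\in\ell^p(\mathbb{Z}_+)$; combining with $f_i\in\ell^p$ and Fubini then delivers the required almost sure summability.

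For the converse (ii)$\Rightarrow$(i), I start from the identity
\begin{equation*}
X(n+1)-X(n)-\sum_{j=0}^n K(n-j)X(j)=f(n)+\sigma(n)\xi(n+1),
\end{equation*}
whose left-hand side sits in $\ell^p(\mathbb{Z}_+;\mathbb{R}^d)$ almost surely by hypothesis and $K\in\ell^1$; projecting on row $i$ gives
\begin{equation*}
\sum_{n=0}^\infty\Bigl|f_i(n)+\sum_{k=1}^m\sigma_{ik}(n)\xi_k(n+1)\Bigr|^p<\infty\quad\text{a.s.}
\end{equation*}
The crucial step is to represent $\eta_i(n+1)=s_i(n)Z_i(n+1)$ for a sequence $(Z_i(n+1))_{n\geq 0}$ of standard normals which is i.i.d.\ in $n$ (the degenerate indices $s_i(n)=0$ are padded with auxiliary independent standard normals on an enlarged probability space; since $s_i(n)=0$ the pad value does not affect the equality above). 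Because the standard Gaussian law is absolutely continuous, Proposition \ref{prop. examples of random variables} places it in $\mathbb{D}$, so Lemma \ref{lem. summability converse} applied to the scalar sequence $f_i(n)+s_i(n)Z_i(n+1)$ yields $f_i\in\ell^p(\mathbb{Z}_+;\mathbb{R})$ and $s_i\in\ell^p(\mathbb{Z}_+;\mathbb{R})$.

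It remains to peel off the individual entries $\sigma_{ik}$ from the summed variance. Assuming without loss of generality that $\text{Var}(\xi_k(1))>0$ for each $k$ (degenerate columns of $\xi$ do not appear in the equation and may be discarded), monotonicity of $x\mapsto x^{p/2}$ on $[0,\infty)$ gives
\begin{equation*}
\text{Var}(\xi_k(1))^{p/2}|\sigma_{ik}(n)|^p=\bigl(\text{Var}(\xi_k(1))\,\sigma_{ik}^2(n)\bigr)^{p/2}\leq s_i(n)^p,
\end{equation*}
and summing in $n$ gives $\sigma_{ik}\in\ell^p$ for every $k$, hence $\sigma\in\ell^p(\mathbb{Z}_+;\mathbb{R}^{d\times m})$. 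The only genuinely non-routine step is the Gaussian reduction producing the i.i.d.\ standard normal sequence $(Z_i(n+1))$ and thereby enabling Lemma \ref{lem. summability converse} in the off-diagonal setting; once that reduction is in place the vector case collapses to the scalar problem already handled in the proof of Theorem \ref{thm. lp characterisation of Volterra difference equation}.
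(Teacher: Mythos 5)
Your proposal is correct and follows essentially the same route as the paper: the Gaussian reduction $\sum_{k}\sigma_{ik}(n)\xi_k(n+1)=s_i(n)Z_i(n+1)$ with an i.i.d.\ standard normal sequence (including the padding of the degenerate indices $s_i(n)=0$) is precisely the content of the paper's Lemma \ref{lem. summability converse with gaussian noise}, after which Lemma \ref{lem. summability converse} applies because the standard normal law is absolutely continuous and hence lies in $\mathbb{D}$ by Proposition \ref{prop. examples of random variables}. The only cosmetic difference is that you peel off the individual entries $\sigma_{ik}$ from $s_i$ by termwise monotonicity of $x\mapsto x^{p/2}$ rather than by equivalence of norms on a finite-dimensional space, which changes nothing of substance.
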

For a proof of Theorem \ref{thm. lp characterisation of Volterra difference equation gaussian noise case} follow the proof of Theorem \ref{thm. lp characterisation of Volterra difference equation} verbatim, the only difference being in the converse invoke Lemma \ref{lem. summability converse with gaussian noise} instead of Lemma \ref{lem. summability converse}.

\begin{lemma} \label{lem. summability converse with gaussian noise}
  Let $(f(n))_{n\geq0}$ and $(\sigma(n))_{n\geq0}$ be deterministic, $\mathbb{R}^d$ and $\mathbb{R}^{d \times m}$-valued sequences respectively. Let $(\xi(n))_{n\geq1}$ obey \eqref{assum. xi is an i.i.d sequence of r.v gaussian case}-\eqref{assum. each component of xi(n) is independent gaussian case}. Then for $p \in [1,\infty)$, if
  \[
  f+\sigma \xi \in \ell^p(\mathbb{N};\mathbb{R}^d) \quad \text{ a.s.}, 
  \]
  then $f \in \ell^p(\mathbb{N};\mathbb{R}^d)$ and $\sigma \in \ell^p(\mathbb{N};\mathbb{R}^{d \times m})$.
\end{lemma}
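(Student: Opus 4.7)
The plan is to reduce the claim to a componentwise application of Lemma \ref{lem. summability converse}, exploiting the fact that sums of independent Gaussians are again Gaussian. Without loss of generality, we may assume each component of $\xi(1)$ has unit variance: if $\tau_j^2 := \mathbb{E}[\langle \xi(1),\textbf{e}_j\rangle^2]>0$, the transformation $\tilde\sigma_{ij}(n) := \tau_j \sigma_{ij}(n)$ and $\tilde\xi_j(n) := \xi_j(n)/\tau_j$ preserves the identity $\sigma(n)\xi(n+1) = \tilde\sigma(n)\tilde\xi(n+1)$ and preserves membership in $\ell^p$.

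Fix an arbitrary row index $i \in \{1,\ldots,d\}$. By hypothesis the scalar sequence
\[
Z_i(n) := f_i(n) + \sum_{j=1}^m \sigma_{ij}(n)\,\xi_j(n+1)
\]
lies in $\ell^p(\mathbb{Z}_+;\mathbb{R})$ almost surely. By \eqref{assum. each component of xi(n) is independent gaussian case}, $Z_i(n)$ is Gaussian with mean $f_i(n)$ and variance
\[
s_i^2(n) := \sum_{j=1}^m \sigma_{ij}^2(n).
\]
Whenever $s_i(n)>0$ set $\eta_i(n+1) := s_i(n)^{-1}\sum_{j=1}^m \sigma_{ij}(n)\xi_j(n+1)$, and otherwise take $\eta_i(n+1)$ to be an independent standard normal drawn from a suitable enlargement of the probability space. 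Then $(\eta_i(n))_{n \ge 1}$ is i.i.d.\ standard normal by \eqref{assum. xi is an i.i.d sequence of r.v gaussian case}, and the scalar identity $Z_i(n) = f_i(n) + s_i(n)\eta_i(n+1)$ holds for every $n$. Since the standard normal distribution has an absolutely continuous part, Proposition \ref{prop. examples of random variables} gives $\eta_i(1) \in \mathbb{D}$, so Lemma \ref{lem. summability converse} applies and yields $f_i \in \ell^p(\mathbb{Z}_+;\mathbb{R})$ and $s_i \in \ell^p(\mathbb{Z}_+;\mathbb{R})$.

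Finally, because $s_i^2(n)$ is a sum of squares we have $\sigma_{ij}^2(n) \le s_i^2(n)$ for every $j$, whence $|\sigma_{ij}(n)|^p \le s_i(n)^p$ and consequently $\sigma_{ij} \in \ell^p(\mathbb{Z}_+;\mathbb{R})$. As $i$ and $j$ were arbitrary, this delivers $f \in \ell^p(\mathbb{Z}_+;\mathbb{R}^d)$ and $\sigma \in \ell^p(\mathbb{Z}_+;\mathbb{R}^{d\times m})$. I do not anticipate a serious obstacle here: once Lemma \ref{lem. summability converse} is in place, the only genuinely new input is the stability of the Gaussian family under linear combinations. The one point requiring care is the possibility that $s_i(n)=0$ for some $n$, but on such indices $\sigma_{ij}(n)=0$ for every $j$, so the choice of the auxiliary variable $\eta_i(n+1)$ is immaterial and the identity $Z_i(n) = f_i(n) + s_i(n)\eta_i(n+1)$ remains valid trivially.
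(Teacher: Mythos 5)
Your proposal is correct and follows essentially the same route as the paper: collapse the row sum $\sum_j \sigma_{ij}(n)\xi_j(n+1)$ into a single Gaussian of standard deviation $s_i(n)$, apply Lemma \ref{lem. summability converse} to the resulting scalar sequence, and then recover each $\sigma_{ij}$ from $s_i \in \ell^p$. Your final step (the pointwise bound $|\sigma_{ij}(n)| \le s_i(n)$) is a slightly cleaner way to close than the paper's appeal to equivalence of norms, and your handling of the degenerate indices $s_i(n)=0$ matches the paper's footnote; both arguments share the same implicit nondegeneracy assumption on the variances of the components of $\xi(1)$.
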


\begin{proof}[Proof of Lemma \ref{lem. summability converse with gaussian noise}]
For each $i \in \{1,\ldots,d\}$
\[
\sum_{n=0}^\infty \left|f_i(n)+\sum_{j=1}^m \sigma_{ij}(n)\xi_j(n+1)\right|^p < \infty \quad \text{ a.s}.
\]
Define
\[
\zeta_i(n) \coloneqq \sum_{j=1}^m \sigma_{ij}(n)\xi_j(n+1).
\]
Then $\zeta_i(n)$ is a sequence of independent and normally distributed random variables with zero mean. Now consider
\[
\zeta_i(n)=\sqrt{\mathbb{E}|\zeta_i(n)|^2}  \dfrac{\zeta_i(n)}{\sqrt{\mathbb{E}|\zeta_i(n)|^2}}=\sqrt{\mathbb{E}|\zeta_i(n)|^2}\, \tilde{\zeta}_i(n),
\]
where $\tilde{\zeta}_i(n)$ is an i.i.d sequence of standard normal random variables\footnote{We need not worry that $\sqrt{\mathbb{E}|\zeta_i(n)|^2}=0$, as if this is the case then $\zeta_i(n)=0$ almost surely and so this term will have no contribution to the series. Thus, in this instance we define $\tilde{\zeta}_i(n)\coloneqq\zeta_i(n)$.}. The series then becomes
\[
\sum_{n=0}^\infty \left|f_i(n)+\sqrt{\mathbb{E}|\zeta_i(n)|^2} \tilde{\zeta}_i(n)\right|^p < \infty \quad \text{ a.s}.
\]
Invoking Lemma \ref{lem. summability converse} yields
\[
\sum_{n=0}^\infty \left|f_i(n)\right|^p + \sum_{n=0}^\infty\left|\sqrt{\mathbb{E}|\zeta_i(n)|^2}\right|^p < \infty.
\]
Recall that
\[
\mathbb{E}|\zeta_i(n)|^2=\sum_{j=1}^m\sigma_{ij}^2(n)\mathbb{E}|\xi_j(n+1)|^2=\sum_{j=1}^mC_j  \sigma_{ij}^2(n).
\]
Thus,
\[
\infty \geq \sum_{n=0}^\infty\left|\sqrt{\mathbb{E}|\zeta_i(n)|^2}\right|^p =  \sum_{n=0}^\infty \left(\sum_{j=1}^mC_j \sigma_{ij}^2(n)\right)^{p/2} \geq C\sum_{n=0}^\infty \sum_{j=1}^m |\sigma_{ij}(n)|^p,
\]
for some $C>0$. The last inequality follows from the fact that norms on finite dimensional vector spaces are equivalent and $p\geq1$. Thus, $f_i,\sigma_{ij} \in \ell^p(\mathbb{N};\mathbb{R})$ for $i \in \{1,\ldots,d\}, j \in \{1,\ldots,m\}$. The claim is proven.
    
\end{proof}
\section{Continuous Results} \label{sec. Continuous results}

\subsection{Notation} \label{subsec. Notation}
We say a function $f:\mathbb{R}_+\to \mathbb{R}^{d \times m}$ obeys $f \in L^p(\mathbb{R}_+;\mathbb{R}^{d \times m})$ if each component function $(f(t))_{i,j} \in L^p(\mathbb{R}_+;\mathbb{R})$ for $i=1,\ldots,d$ and $j=1,\ldots,m$, where $\left(f(t)\right)_{i,j} \coloneqq \langle \textbf{e}_i,f(t)\textbf{e}_j \rangle$. We make this non-standard choice of notation to emphasize the fact that all proofs are carried out componentwise. This definition is equivalent to $\|f\| \in L^p(\mathbb{R}_+;\mathbb{R})$ for any chosen norm on $\mathbb{R}^{d\times m}$. Let $M(E;\mathbb{R}^{d\times d})$ denote the space of $d \times d$ matrix valued, finite, signed Borel measures on $E\subset \mathbb{R}$. Given $\nu \in M(E;\mathbb{R}^{d\times d})$ the total variation measure of each component is denoted $|\nu_{ij}|$. The space of continuous functions $f:\mathbb{R}_+ \to \mathbb{R}^{d\times m}$ is denoted by $C(\mathbb{R}_+;\mathbb{R}^{d\times m})$, while $BC_0(\mathbb{R}_+;\mathbb{R}^{d\times m})$ is the space of bounded continuous functions which vanish at infinity. If $X$ is an $\mathbb{R}^d$-valued stochastic process then once again we note the statement $\mathbb{E}|X_i(\cdot)|^p \in L^p(\mathbb{R}_+;\mathbb{R})$ for each $i \in \{1,\ldots,d\}$ is equivalent to  $\mathbb{E}\|X(\cdot)\|^p \in L^p(\mathbb{R}_+;\mathbb{R})$ for any chosen norm on $\mathbb{R}^d$. When there is no scope for ambiguity to arise, $\|\cdot \|$ will denote an arbitrary norm on the given finite dimensional vector space under consideration. For scalar-valued objects we shall use the notation $|\cdot |$ to represent the standard euclidean norm on $\mathbb{R}$. If $a,b \in \mathbb{R}$, $a\wedge b$ and $a\vee b$ denote $\min$ and $\max$ of the two numbers respectively.

\subsection{Main Results} \label{subsec. Main Results}
Consider a complete filtered probability space $(\Omega,\mathcal{F},\mathcal{F}_t,\mathbb{P})$ supporting an $m$-dimensional standard Brownian motion, $(B_t)_{t\geq0}$. We study the following $\mathbb{R}^d$-valued stochastic Volterra integro-differential equation

\begin{equation} \label{eq. continuous SVE}
    dX(t)  = \left(f(t)+\int_{[0,t]}\nu(ds)X(t-s)\right)dt+\sigma(t)dB(t), \quad t \geq 0; \quad    X(0)  = x \in \mathbb{R}^d.
\end{equation}
Here $f \in C(\mathbb{R}_+;\mathbb{R}^d)$, $\nu \in M(\mathbb{R}_+;\mathbb{R}^{d\times d})$ and $\sigma \in C(\mathbb{R}_+;\mathbb{R}^{d\times m})$. Note that, as $B$ is a standard  $m$-dimensional Brownian motion, $ \langle B(t),\textbf{e}_i \rangle$ and $ \langle B(t),\textbf{e}_j \rangle$ are independent for all $i\neq j$. We write precisely the conditions on the forcing functions $f$ and $\sigma$ which we shall be primarily concerned with throughout the rest of this paper.

\begin{align}
    & \text{For } p \in [1,\infty), \quad \int_{\cdot}^{\cdot+\theta} f_i(s)ds \in L^p(\mathbb{R}_+;\mathbb{R}) \quad \forall\, \theta>0, \quad  i \in \{1,\ldots,d\}, \label{cond. f}\\
    & \text{For } p \in [2,\infty), \int_{\cdot}^{\cdot+\theta} \sigma_{ij}^2(s)ds \in L^{\frac{p}{2}}(\mathbb{R}_+;\mathbb{R})\quad \forall\,  \theta>0,\,\,  i \in \{1,\ldots,d\},\,j \in \{1,\ldots,m\}, \label{cond. sigma p geq 2}\\
    & \text{For } p \in [1,2), \int_{\cdot}^{\cdot+1} \sigma_{ij}^2(s)ds \in \ell^{\frac{p}{2}}(\mathbb{N};\mathbb{R} ), \quad i \in \{1,\ldots,d\},\,j \in \{1,\ldots,m\}. \label{cond. sigma p<2}
\end{align}

Next we introduce the continuous time counterpart of \eqref{eq. Volterra difference resolvent}. The differential resolvent for \eqref{eq. continuous SVE} is the $d\times d$ matrix-valued solution to the equation

\begin{equation} \label{eq. continuous resolvent}
  \dot{r}(t)=\int_{[0,t]}\nu(ds)r(t-s), \quad t>0; \quad r(0)=I_{d\times d}. 
\end{equation}
In the sequel we shall employ the following standing assumption

\begin{align} \label{assum. continuous resolvent is L1}
    r \in L^1(\mathbb{R}_+;\mathbb{R}^{d\times d}).
\end{align}
For the reader's convenience we recall a classical characterisation of condition \eqref{assum. continuous resolvent is L1}. If $\nu \in M(\mathbb{R}_+;\mathbb{R}^{d \times d})$ then
\begin{align*}
    r \in L^1(\mathbb{R}_+;\mathbb{R}^{d\times d}) \iff \text{det}[zI_{d \times d}+ \int_{\mathbb{R}_+}e^{-zt}\nu(dt)] \neq 0, \quad \text{Re}(z)\geq 0.
\end{align*}
For a thorough study of equation \eqref{eq. continuous resolvent} we refer the reader to \cite{GLS}. With preliminaries dispensed with we now present our main result.

\begin{theorem} \label{thm. L^p theorem continuous SVE}
    Let $p\in [1,\infty)$, $r$ obey \eqref{assum. continuous resolvent is L1}, and $X$ be the solution of  \eqref{eq. continuous SVE}. Then we have the following dichotomy:
    \begin{enumerate}
        \item[(\textbf{A})] If $p \in [2,\infty)$, the following are equivalent:
    \begin{itemize}
        \item[(i)] $\mathbb{E}\|X(\cdot)\|^p \in L^1(\mathbb{R}_+;\mathbb{R})$,
        \item[(ii)]$X(\cdot) \in L^p(\mathbb{R}_+;\mathbb{R}^d)$ almost surely,
        \item[(iii)] $f$ obeys \eqref{cond. f} and $\sigma$ obeys \eqref{cond. sigma p geq 2}.
    \end{itemize}

    \item[(\textbf{B})] If $p \in [1,2)$, the following are equivalent:
    \begin{itemize}
        \item[(i)] $\mathbb{E}\|X(\cdot)\|^p \in L^1(\mathbb{R}_+;\mathbb{R})$,
        \item[(ii)]$X(\cdot) \in L^p(\mathbb{R}_+;\mathbb{R}^d)$ almost surely,
        \item[(iii)]$f$ obeys \eqref{cond. f} and $\sigma$ obeys \eqref{cond. sigma p<2}.
    \end{itemize}
    \end{enumerate}
\end{theorem}
% \begin{proof}[Proof of Theorem \ref{thm. L^p theorem continuous SVE}]
% Theorem \ref{thm. L^p theorem for continuous OU} along with Lemma \ref{lem. L^p of SVE and OU are equivalent} yield the claim.
    
% \end{proof}
Before we discuss the proof of Theorem \ref{thm. L^p theorem continuous SVE} we mention the assumption $r \in L^1(\mathbb{R}_+;\mathbb{R}^{d\times d})$ is in a certain sense necessary. By this we mean a converse result of the type given by Theorem 3.9 in \cite{GLS} also holds in the stochastic case even though its proof is essentially a trivial consequence of the deterministic theorem. Nonetheless we state it here for completeness.

\begin{theorem} \label{thm. SVE r L1 converse}
Let $p\in [1,\infty)$ and $X$ be the solution of  \eqref{eq. continuous SVE}. Then the following statements are true:
     \begin{enumerate}
        \item[(\textbf{A})] If $p \in [2,\infty)$, the following are equivalent:
    
    \begin{itemize}
        \item[(i)] $r \in L^1(\mathbb{R}_+;\mathbb{R}^{d\times d})$,
        \item[(ii)]  For all $x \in \mathbb{R}^d$, $f$ and $\sigma$ satisfying \eqref{cond. f} and \eqref{cond. sigma p geq 2} respectively, we have\\ $\mathbb{E}\|X(\cdot)\|^p \in L^1(\mathbb{R}_+;\mathbb{R})$,
        \item[(iii)]   For all $x \in \mathbb{R}^d$, $f$ and $\sigma$ satisfying \eqref{cond. f} and \eqref{cond. sigma p geq 2} respectively, we have \\$X(\cdot) \in L^p(\mathbb{R}_+;\mathbb{R}^d)$ almost surely.
    \end{itemize}

    \item [(\textbf{B})] If $p \in [1,2)$, the following are equivalent:
    \begin{itemize}
        \item[(i)]$r \in L^1(\mathbb{R}_+;\mathbb{R}^{d\times d})$,
        \item[(ii)]  For all $x \in \mathbb{R}^d$, $f$ and $\sigma$ satisfying \eqref{cond. f} and \eqref{cond. sigma p<2} respectively, we have\\ $\mathbb{E}\|X(\cdot)\|^p \in L^1(\mathbb{R}_+;\mathbb{R})$,
        \item[(iii)]   For all $x \in \mathbb{R}^d$, $f$ and $\sigma$ satisfying \eqref{cond. f} and \eqref{cond. sigma p<2} respectively, we have\\ $X(\cdot) \in L^p(\mathbb{R}_+;\mathbb{R}^d)$ almost surely.
    \end{itemize}
    \end{enumerate}
\end{theorem}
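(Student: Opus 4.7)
My plan is to derive $(i) \Rightarrow (ii), (iii)$ directly from Theorem \ref{thm. L^p theorem continuous SVE} and to handle the two converses by switching off the noise and invoking the classical deterministic Volterra converse. For the forward direction, once $r \in L^1$ is assumed, Theorem \ref{thm. L^p theorem continuous SVE} supplies for each individual admissible choice of data $(\xi, f, \sigma)$ the equivalence between $\mathbb{E}[\|X(\cdot)\|^p] \in L^1(\mathbb{R}_+;\mathbb{R})$, a.s.\ $L^p$ sample paths, and the prescribed integrability of $f$ and $\sigma$. Universally quantifying over $(\xi, f, \sigma)$ then immediately yields items (ii) and (iii) of the current statement, and this covers both cases \textbf{A} and \textbf{B}.

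The substantive step is the converse $(ii) \Rightarrow (i)$ (and, by the same reasoning, $(iii) \Rightarrow (i)$), which I would handle by turning off the noise and the initial condition. Specifically, take $\sigma \equiv 0$, which vacuously satisfies both \eqref{cond. sigma p geq 2} and \eqref{cond. sigma p<2}, and $\xi = 0$. Then \eqref{eq. continuous SVE} collapses to the deterministic integrodifferential equation
\begin{equation*}
    \dot{x}(t) = f(t) + \int_{[0,t]} \nu(ds)\, x(t-s), \qquad x(0) = 0,
\end{equation*}
whose unique solution is purely deterministic. Accordingly, both (ii) and (iii) reduce to the single assertion $x \in L^p(\mathbb{R}_+;\mathbb{R}^d)$. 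The deterministic variation-of-constants formula then gives $x = r * f$, and so the hypothesis becomes: $r * f \in L^p(\mathbb{R}_+;\mathbb{R}^d)$ for every continuous $f$ obeying \eqref{cond. f}. In particular, this holds for every $f \in C(\mathbb{R}_+;\mathbb{R}^d) \cap L^p(\mathbb{R}_+;\mathbb{R}^d)$, since such $f$ automatically satisfy \eqref{cond. f} by H\"older's inequality applied to the sliding window average.

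The conclusion $r \in L^1(\mathbb{R}_+;\mathbb{R}^{d\times d})$ then follows from the classical deterministic Volterra converse \cite[Theorem 3.9]{GLS}, which states precisely that if convolution with the resolvent $r$ maps a sufficiently rich class of $L^p$ forcings into $L^p$, then $r \in L^1$. The class $C \cap L^p$ is rich enough for this purpose (it already contains all compactly supported continuous functions), so the appeal is clean. I do not anticipate any serious technical obstacle here; the proof is essentially bookkeeping on top of Theorem \ref{thm. L^p theorem continuous SVE} together with a trivial reduction to the deterministic case, which is exactly why the authors advertise the result as a simple corollary of the deterministic theory.
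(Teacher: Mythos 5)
Your proposal is correct and takes essentially the same route as the paper: the implication $(i)\Rightarrow(ii),(iii)$ is read off from Theorem \ref{thm. L^p theorem continuous SVE}, and the converse is obtained by switching off the noise ($\sigma=0$) and appealing to the deterministic resolvent characterisation in \cite[Theorem 3.9]{GLS}. The only cosmetic difference is that you verify that $L^p$ forcings satisfy \eqref{cond. f} directly via H\"older's inequality on the sliding window, whereas the paper deduces the same fact from Theorem \ref{thm. L^p theorem continuous SVE} applied with $\nu(dt)=-\delta_0(dt)I_{d\times d}$; both are valid.
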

\begin{proof}[Proof of Theorem \ref{thm. SVE r L1 converse}]
For both statements (\textbf{A}) and (\textbf{B}) the fact that (i) $\implies$ (ii) \& (iii) follows directly from Theorem \ref{thm. L^p theorem continuous SVE}, hence we need only show (iii) $\implies$ (i). We prove this for (\textbf{A}) and (\textbf{B}) simultaneously by setting $\sigma = 0$, hence conditions \eqref{cond. sigma p geq 2} and \eqref{cond. sigma p<2} are automatically satisfied. Now fix $f \in L^p(\mathbb{R}_+;\mathbb{R}^d)$; each component of $f$ must also satisfy \eqref{cond. f}, this follows from Theorem \ref{thm. L^p theorem continuous SVE} with $\nu(dt)=-\delta_{0}(dt)I_{d\times d}$ where $\delta_0$ is a point mass measure at zero. The claim then follows from \cite[Theorem 3.9]{GLS}.
\end{proof}
Although Theorem \ref{thm. SVE r L1 converse} is not entirely satisfactory it seems one cannot do better. One would ideally like to prove that $r \in L^1(\mathbb{R}_+;\mathbb{R}^{d\times d})$ starting from the hypothesis that for all initial conditions $x \in \mathbb{R}^d$ the solution obeyed $X(\cdot) \in L^p(\mathbb{R}_+;\mathbb{R}^d)$ almost surely. However, even in the deterministic case (i.e., with $\sigma=0$) such a result is unavailable. One case in which a much stronger result can be proven is that of finite memory, this is achieved by Theorem \ref{thm. L^p theorem for continuous SFDE} in Section \ref{sec. SFDEs}. In this situation one relies heavily on an asymptotic expansion for the resolvent which is far more explicit than what one would get in the Volterra setting.

Next we turn to the task of proving Theorem \ref{thm. L^p theorem continuous SVE}. In order to efficiently study equation \eqref{eq. continuous SVE} we introduce a simpler equation which is embedded in \eqref{eq. continuous SVE}, namely if one sets $\nu(dt)=-\delta_{0}(dt)I_{d\times d}$ then
\begin{equation} \label{eq. continuous OU process}
    dY(t)  = \left(f(t)-Y(t)\right)dt+\sigma(t)dB(t), \quad t \geq 0; \quad    Y(0)  = 0.
\end{equation}
Next we introduce a lemma which shows the $p$-integrability of solutions to \eqref{eq. continuous SVE} is equivalent to the $p$-integrability of solutions to \eqref{eq. continuous OU process}. This is a very convenient result as \eqref{eq. continuous OU process} admits an explicit solution and is much more tractable from an analytic point of view.

\begin{lemma} \label{lem. L^p of SVE and OU are equivalent}
Let $p\in [1,\infty)$, $r$ obey \eqref{assum. continuous resolvent is L1}, $X$ be the solution of  \eqref{eq. continuous SVE}, and $Y$ be the solution of \eqref{eq. continuous OU process}. Then
\begin{itemize}
    \item[(i)] $\mathbb{E}\|X(\cdot)\|^p \in L^1(\mathbb{R}_+;\mathbb{R}) \iff \mathbb{E}\|Y(\cdot)\|^p \in L^1(\mathbb{R}_+;\mathbb{R})$,
    \item[(ii)] $X(\cdot) \in L^p(\mathbb{R}_+;\mathbb{R}^d)$ almost surely $\iff$ $Y(\cdot) \in L^p(\mathbb{R}_+;\mathbb{R}^d)$ almost surely.
\end{itemize}
\end{lemma}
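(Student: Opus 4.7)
My plan is to exploit the fact that the difference $D := X - Y$ satisfies a purely deterministic (pathwise) linear Volterra equation, because the stochastic integrals in \eqref{eq. continuous SVE} and \eqref{eq. continuous OU process} are driven by the same $\sigma(t)dB(t)$ and therefore cancel upon subtraction. Writing both SDEs in integral form and subtracting yields
\[
\dot D(t) \;=\; \int_{[0,t]}\nu(ds)\,D(t-s) \;+\; Y(t) \;+\; (\nu * Y)(t), \qquad D(0)=\xi,
\]
where $(\nu * Y)(t):=\int_{[0,t]}\nu(ds)Y(t-s)$. Applying the standard variation-of-constants formula for the differential resolvent $r$ from \eqref{eq. continuous resolvent} gives the pathwise identity
\[
X(t)-Y(t) \;=\; r(t)\xi \;+\; \int_0^t r(t-s)\bigl[\,Y(s)+(\nu * Y)(s)\,\bigr]\,ds.
\]

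From this I would deduce both implications of part (ii) via Young's convolution inequality. Since $r\in L^1(\mathbb{R}_+;\mathbb{R}^{d\times d})$ by \eqref{assum. continuous resolvent is L1} and $r$ is continuous (hence bounded on $\mathbb{R}_+$, because a continuous $L^1$ solution of \eqref{eq. continuous resolvent} with bounded derivative is bounded), we have $r\in L^1\cap L^\infty\subset L^p$ for every $p\in[1,\infty)$. Combining this with the fact that convolution with the finite signed measure $\nu$ maps $L^p$ into $L^p$ with norm at most $|\nu|(\mathbb{R}_+)$, Minkowski's integral inequality yields a pathwise bound of the form
\[
\|X-Y\|_{L^p(\mathbb{R}_+)} \;\leq\; \|r\|_{L^p}\,\|\xi\| \;+\; \|r\|_{L^1}\bigl(1+|\nu|(\mathbb{R}_+)\bigr)\,\|Y\|_{L^p(\mathbb{R}_+)}.
\]
Hence $Y(\cdot)\in L^p$ a.s.\ forces $X(\cdot)\in L^p$ a.s. For the reverse direction I would repeat the argument with the roles swapped: $E := Y-X$ satisfies the scalar ODE $\dot E + E = -X - \nu*X$ with $E(0)=-\xi$, whose resolvent is simply $e^{-t}I_{d\times d}\in L^1\cap L^\infty$, yielding an analogous bound of $\|Y\|_{L^p}$ in terms of $\|X\|_{L^p}$ and $\|\xi\|$.

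For part (i) I would raise the pathwise inequalities to the $p$-th power (using $(a+b)^p\leq 2^{p-1}(a^p+b^p)$), take expectations, and apply Fubini's theorem to convert $\mathbb{E}\|X\|_{L^p}^p=\int_0^\infty\mathbb{E}[\|X(t)\|^p]\,dt$; the resulting two-sided bound between $\int_0^\infty\mathbb{E}[\|X(t)\|^p]\,dt$ and $\int_0^\infty\mathbb{E}[\|Y(t)\|^p]\,dt$ delivers the equivalence. There is no genuinely hard step here: the substantive content is algebraic, and the only point requiring a little care is that the stochastic integrals really do cancel (which is why one starts from the integral form of the equations rather than the differential shorthand) and that $r$ is indeed bounded, which follows from its continuity together with $r\in L^1$ and the integral equation it satisfies. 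I would expect this mild regularity check on $r$ to be the only non-mechanical ingredient.
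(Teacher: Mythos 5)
Your proposal is correct and follows essentially the same route as the paper: both directions rest on the cancellation of the stochastic integrals, the variation-of-constants representation of $X-Y$ via the resolvent $r$ in one direction and via $e^{-t}I_{d\times d}$ in the other, and $L^1$-convolution estimates (the paper does these componentwise and in expectation first, you do them pathwise via Young's inequality and then take expectations, which is only a cosmetic difference). The only point to tidy is your justification that $r$ is bounded: rather than ``bounded derivative,'' note that $\dot r=\nu\ast r\in L^1(\mathbb{R}_+;\mathbb{R}^{d\times d})$ since $\nu$ is finite and $r\in L^1$, so $r(t)=I_{d\times d}+\int_0^t\dot r(s)\,ds$ is bounded.
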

\begin{remark}
    Lemma \ref{lem. L^p of SVE and OU are equivalent} has deep consequences. Essentially this lemma tells us that the path dependence of the Volterra equation has no bearing whatsoever on whether the paths are integrable functions in time, this is purely dictated by the perturbation functions. A priori there seems no reason to suspect this is the case, this result suggests in order for the path dependence to have significant impact on the qualitative behaviour of the solution one needs stronger memory, i.e., one could consider instead sigma finite measures for the kernel. 
\end{remark}

\begin{proof}[Proof of Lemma \ref{lem. L^p of SVE and OU are equivalent}]
    Suppose $\mathbb{E}\|X(\cdot)\|^p \in L^1(\mathbb{R}_+;\mathbb{R})$. Rewrite \eqref{eq. continuous SVE} as
    \[
    dX(t)=\left(f(t)+Q(t)-X(t)\right)dt+\sigma(t)dB(t),
    \]
    where $Q(t)=X(t)+\int_{[0,t]}\nu(ds)X(t-s)$. For the $i^{th}$ component of $Q$ we have using standard estimates
    \[
    \mathbb{E}|Q_i(t)|^p \leq C\mathbb{E}|X_i(t)|^p+C\sum_{j=1}^d \left(|\nu_{ij}| \ast \mathbb{E}|X_j(\cdot)|^p\right)(t),
    \]
    where $C>0$. Then by our supposition and the fact $|\nu_{ij}|$ is a finite measure, the right hand side must be integrable and so $\mathbb{E}|Q_i(t)|^p$ is integrable. Next we write down $X$ in terms of $Y$ and $Q$. The $i^{th}$ component of $X(t)$ is
    \begin{align*}
        X_i(t)& =x_i e^{-t}+\int_0^t e^{-(t-s)}(f_i(s)+Q_i(s))ds+ \sum_{j=1}^m\int_0^te^{-(t-s)}\sigma_{ij}(s)dB_{j}(s)\\
        & = x_ie^{-t} + \int_0^t e^{-(t-s)}Q_i(s)ds + Y_i(t).
    \end{align*}
    Rearranging yields
    \[
    Y_i(t)=X_i(t)-x_ie^{-t}-\int_0^t e^{-(t-s)}Q_i(s)ds.
    \]
    This representation for $Y_i(t)$ gives the estimate
    \[
    \mathbb{E}|Y_i(t)|^p \leq C\mathbb{E}|X_i(t)|^p+ Ce^{-pt}|x_i|^p+C\int_0^te^{-(t-s)}\mathbb{E}|Q_i(s)|^pds,
    \]
    where $C>0$. The first term on the right hand side is integrable by supposition and the third is integrable as $\mathbb{E}|Q_i(t)|^p$ has been shown to be integrable. Hence,  $\mathbb{E}|Y_i(t)|^p \in L^1(\mathbb{R}_+;\mathbb{R})$ and so $\mathbb{E}\|Y(\cdot)\|^p \in L^1(\mathbb{R}_+;\mathbb{R})$.
    
    Next we show the reverse implication. Suppose $\mathbb{E}\|Y(\cdot)\|^p \in L^1(\mathbb{R}_+;\mathbb{R})$. Introduce the process $Z(t)=X(t)-Y(t)$. For each outcome $\omega$, $Z$ satisfies the equation
    \[
    \dot{Z}(t)= \int_{[0,t]}\nu(ds)Z(t-s)+g(t),
    \]
    where $g(t)=Y(t)+\int_{[0,t]}\nu(ds)Y(t-s)$. This equation is interpreted as in the deterministic literature \cite{GLS} with analysis being carried out path by path. The $i^{th}$ component of $Z$ satisfies
    \[
    Z_i(t) =\sum_{j=1}^d r_{ij}(t)x_j + \sum_{j=1}^d \int_0^tr_{ij}(t-s)g_j(s)ds.
    \]
    This yields the estimate
    \[
    \mathbb{E}|Z_i(t)|^p \leq C\sum_{j=1}^d |r_{ij}(t)x_j|^p+ C\sum_{j=1}^d \int_0^t|r_{ij}(t-s)|\mathbb{E}|g_j(s)|^pds,
    \]
    where $C>0$. Assumption \eqref{assum. continuous resolvent is L1} ensures $r \in L^p(\mathbb{R}_+;\mathbb{R}^{d \times d})$. Thus, we need only show $\mathbb{E}|g_j(t)|^p$ is integrable. By the same argument used to estimate $\mathbb{E}|Q_i(t)|^p$, 
    \[
    \mathbb{E}|g_i(t)|^p \leq C\mathbb{E}|Y_i(t)|^p +C\sum_{j=1}^d \left(|\nu_{ij}| \ast \mathbb{E}|Y_j(\cdot)|^p\right)(t),
    \]
    where $C>0$. Once again as $|\nu_{ij}|$ is a finite measure and $\mathbb{E}\|Y(\cdot)\|^p$ is integrable (by supposition), it follows that $\mathbb{E}|g_i(t)|^p$ is integrable and hence $\mathbb{E}|Z_i(t)|^p$ is integrable. Thus, $\mathbb{E}\|Z(\cdot)\|^p \in L^1(\mathbb{R}_+;\mathbb{R})$. By the definition of $Z$, $X(t)=Z(t)+Y(t)$. Thus, each component satisfies
    \[
    \int_0^\infty\mathbb{E}|X_i(t)|^pdt \leq C\int_0^\infty\mathbb{E}|Z_i(t)|^p+\mathbb{E}|Y_i(t)|^pdt < \infty,
    \]
    where $C>0$. Thus, $\mathbb{E}\|X(\cdot)\|^p \in L^1(\mathbb{R}_+;\mathbb{R})$ as required. The second assertion follows by using the exact same estimates modulo taking expectations. 
\end{proof}
With Lemma \ref{lem. L^p of SVE and OU are equivalent} at hand we can now devote our attention to the study of the OU type process \eqref{eq. continuous OU process}. The proof of the following theorem is the main piece of analysis throughout this work.

\begin{theorem} \label{thm. L^p theorem for continuous OU}
    Let $p\in [1,\infty)$, $r$ obey \eqref{assum. continuous resolvent is L1}, and $Y$ be the solution of  \eqref{eq. continuous OU process}. Then we have the following dichotomy:
    \begin{enumerate}
        \item[(\textbf{A})] If $p \in [2,\infty)$, the following are equivalent:
    \begin{itemize}
        \item[(i)] $\mathbb{E}\|Y(\cdot)\|^p \in L^1(\mathbb{R}_+;\mathbb{R})$,
        \item[(ii)]$Y(\cdot) \in L^p(\mathbb{R}_+;\mathbb{R}^d)$ almost surely,
        \item[(iii)]$f$ obeys \eqref{cond. f} and $\sigma$ obeys \eqref{cond. sigma p geq 2}.
    \end{itemize}

    \item[(\textbf{B})] If $p \in [1,2)$, the following are equivalent:
    \begin{itemize}
        \item[(i)] $\mathbb{E}\|Y(\cdot)\|^p \in L^1(\mathbb{R}_+;\mathbb{R})$,
        \item[(ii)]$Y(\cdot) \in L^p(\mathbb{R}_+;\mathbb{R}^d)$ almost surely,
        \item[(iii)]$f$ obeys \eqref{cond. f} and $\sigma$ obeys \eqref{cond. sigma p<2}.
    \end{itemize}
    \end{enumerate}

\end{theorem}

\begin{proof}[Proof of Theorem \ref{thm. L^p theorem for continuous OU}]
\textbf{(A)}: $(iii) \implies (i).$ The $i^{th}$ component of $Y$ satisfies
\[
Y_i(t)=\int_0^te^{-(t-s)}f_i(s)ds+\sum_{j=1}^m\int_0^te^{-(t-s)}\sigma_{ij}(s)dB_j(s).
\]
This yields the estimate
\[
\mathbb{E}|Y_i(t)|^p \leq C\left| \int_0^te^{-(t-s)}f_i(s)ds\right|^p + C\sum_{j=1}^m \mathbb{E}\left[\left|\int_0^te^{-(t-s)}\sigma_{ij}(s)dB_j(s)\right|^p\right],
\]
where $C>0$. As the authors have shown in \cite[Theorem 1]{AL:2023(AppliedMathLetters)}, $\int_0^te^{-(t-s)}f_i(s)ds \in L^p(\mathbb{R}_+;\mathbb{R})$ if and only if $\int_{\cdot}^{\cdot+\theta} f_i(s)ds \in L^p(\mathbb{R}_+;\mathbb{R})$ for all $\theta>0$. Thus, we need only focus on the second term. For each $j \in \{1,\ldots,m\}$
\[
\dfrac{\int_0^te^{-(t-s)}\sigma_{ij}(s)dB_j(s)}{\left( \int_0^te^{-2(t-s)}\sigma_{ij}^2(s)ds\right)^{1/2}} \sim \mathcal{N}(0,1).
\]
Hence
\[
\mathbb{E}\left[\left|\int_0^te^{-(t-s)}\sigma_{ij}(s)dB_j(s)\right|^p\right]=C_p\left( \int_0^te^{-2(t-s)}\sigma_{ij}^2(s)ds\right)^{p/2},
\]
where $C_p$ is a constant representing the $p$-th moment of a standard normal random variable. Then once again by \cite[Theorem 1]{AL:2023(AppliedMathLetters)}, the left hand side is integrable if and only if $\int_{\cdot}^{\cdot+\theta} \sigma_{ij}^2(s)ds \in L^{\frac{p}{2}}(\mathbb{R}_+;\mathbb{R})$ for all $\theta>0$. Thus, $\mathbb{E}|Y_i(t)|^p$ is integrable and the first claim is proven.\\
\newline
\textbf{(A)}: $(i) \implies (ii).$ An application of Fubini's Theorem yields the claim.\\
\newline
\textbf{(A)}: $(ii) \implies (iii).$ Recall, the $i^{th}$ component of $Y$ obeys
\[
dY_i(t)=(f_i(t)-Y_i(t))dt+ \sum_{j=1}^m \sigma_{ij}(t)dB_j(t).
\]
Integrating this equation over the interval $[t,t+1]$ yields
\[
Y_i(t+1)-Y_i(t)+\int_{t}^{t+1}Y_i(s)ds= \int_{t}^{t+1}f_i(s)ds+\sum_{j=1}^m\int_t^{t+1}\sigma_{ij}(s)dB_j(s).
\]
Condition $(ii)$ implies the three terms on the left hand side are elements of $L^p(\mathbb{R}_+;\mathbb{R})$ almost surely. Thus,
\[
\int_0^\infty\left| \int_{t}^{t+1}f_i(s)ds+\sum_{j=1}^m\int_t^{t+1}\sigma_{ij}(s)dB_j(s) \right|^p dt < \infty \quad a.s.
\]
We break this integral into two series
\begin{multline*}
\sum_{n=0}^\infty \int_{2n}^{2n+1}\left| \int_{t}^{t+1}f_i(s)ds+\sum_{j=1}^m\int_t^{t+1}\sigma_{ij}(s)dB_j(s) \right|^pdt+\\\sum_{n=0}^\infty \int_{2n+1}^{2n+2}\left| \int_{t}^{t+1}f_i(s)ds+\sum_{j=1}^m\int_t^{t+1}\sigma_{ij}(s)dB_j(s) \right|^pdt <\infty \quad a.s.
\end{multline*}
We estimate these series below
\begin{multline*}
\sum_{n=0}^\infty \left|\int_{2n}^{2n+1} \int_{t}^{t+1}f_i(s)dsdt+\sum_{j=1}^m\int_{2n}^{2n+1} \int_t^{t+1}\sigma_{ij}(s)dB_j(s) dt\right|^p+\\\sum_{n=0}^\infty \left|\int_{2n+1}^{2n+2} \int_{t}^{t+1}f_i(s)dsdt+\sum_{j=1}^m\int_{2n+1}^{2n+2}\int_t^{t+1}\sigma_{ij}(s)dB_j(s) dt\right|^p.
\end{multline*}
Introduce the following notation
\begin{align*}
    f_{i,1}(n) & \coloneqq \int_{2n}^{2n+1} \int_{t}^{t+1}f_i(s)dsdt; \quad f_{i,2}(n) \coloneqq \int_{2n+1}^{2n+2} \int_{t}^{t+1}f_i(s)dsdt,\\
    u_{i,j}(n) & \coloneqq \int_{2n}^{2n+1} \int_t^{t+1}\sigma_{ij}(s)dB_j(s) dt; \quad v_{i,j}(n) \coloneqq \int_{2n+1}^{2n+2}\int_t^{t+1}\sigma_{ij}(s)dB_j(s) dt.
\end{align*}
The two series become
\[
\sum_{n=0}^\infty \left| f_{i,1}(n)+\sum_{j=1}^mu_{i,j}(n)\right|^p+\sum_{n=0}^\infty \left| f_{i,2}(n)+\sum_{j=1}^mv_{i,j}(n)\right|^p < \infty \quad a.s.
\]
An application of the stochastic Fubini Theorem yields the following representations for $u_{i,j}(n)$ and $v_{i,j}(n)$ respectively,
\begin{align*}
 u_{i,j}(n) & = \int_{2n}^{2n+2}\left[ s \wedge (2n+1)-2n\vee (s-1) \right] \sigma_{ij}(s)dB_j(s),\\
 v_{i,j}(n) & = \int_{2n+1}^{2n+3}\left[s\wedge(2n+2)-(2n+1)\vee(s-1)\right]\sigma_{ij}(s)dB_j(s).
\end{align*}
We note $u_{i,j}(n)$ and $v_{i,j}(n)$ are sequences of independent Gaussian random variables (with mean zero), this is because each member of the sequences is a Wiener integral where the domains of integration are non-overlapping. It is a general fact that such integrals are independent. Furthermore, we define
\[
U_i(n) \coloneqq \sum_{j=1}^m u_{i,j}(n); \quad V_i(n) \coloneqq \sum_{j=1}^m v_{i,j}(n).
\]
$U_i(n)$ and $V_i(n)$ are sequences of independent Gaussian random variables (which follows from the independence of the components of the $m$-dimensional Brownian motion). These can be rewritten as follows\footnote{As in the discrete case we need not worry that $\mathbb{E}|U_i(n)|^2=0$ or $\mathbb{E}|V_i(n)|^2=0$ for some $n$, in this case we define $\tilde{U}_i(n)\coloneqq U_i(n)$ and $\tilde{V}_i(n)=V_i(n)$ for such values of $n$. If however both $\mathbb{E}|U_i(n)|^2=0$ and $\mathbb{E}|V_i(n)|^2=0$ for all $n$ then care is needed. Using the identity \eqref{eq. Variance of U_i and V_i}, It\^o's isometry will force $\sigma_{ij}(t)=0$ for $t \in [2n,2n+2]$. But this holds for all $n$ and so it forces $\sigma_{ij}(t)=0$ for all $t\geq0$ and so the desired integrability condition will be trivially satisfied.}
\begin{align*}
    U_i(n) & = \sqrt{\mathbb{E}|U_i(n)|^2} \dfrac{U_i(n)}{\sqrt{\mathbb{E}|U_i(n)|^2}} = \sqrt{\mathbb{E}|U_i(n)|^2} \tilde{U}_i(n),\\
    V_i(n) & = \sqrt{\mathbb{E}|V_i(n)|^2} \dfrac{V_i(n)}{\sqrt{\mathbb{E}|V_i(n)|^2}} = \sqrt{\mathbb{E}|V_i(n)|^2} \tilde{V}_i(n).\\
\end{align*}
Now $\tilde{U}_i(n)$ and $\tilde{V}_i(n)$ are i.i.d. sequences of standard normal random variables. This yields the final form of our two series
\begin{equation} \label{eq. Variance of U_i and V_i}
  \sum_{n=0}^\infty \left| f_{i,1}(n)+\sqrt{\mathbb{E}|U_i(n)|^2} \tilde{U}_i(n)\right|^p+\sum_{n=0}^\infty \left| f_{i,2}(n)+\sqrt{\mathbb{E}|V_i(n)|^2} \tilde{V}_i(n)\right|^p < \infty \quad a.s.  
\end{equation}
Invoking Lemma \ref{lem. summability converse} yields
\[
\sum_{n=0}^\infty \left|\sqrt{\mathbb{E}|U_i(n)|^2}\right|^p+\sum_{n=0}^\infty \left|\sqrt{\mathbb{E}|V_i(n)|^2}\right|^p < \infty.
\]
Next we analyse both summands. As $U_i$ and $V_i$ are linear combinations of independent normal random variables (with zero mean) it follows

\begin{align*}
    \mathbb{E}|U_i(n)|^2 =\sum_{j=1}^m\mathbb{E}|u_{i,j}(n)|^2; \quad \mathbb{E}|V_i(n)|^2 =\sum_{j=1}^m\mathbb{E}|v_{i,j}(n)|^2
\end{align*}
As norms on finite dimensional vector spaces are equivalent, we have
\begin{align*}
    \sum_{n=0}^\infty \left|\sqrt{\mathbb{E}|U_i(n)|^2}\right|^p & =\sum_{n=0}^\infty \left(\sum_{j=1}^m\mathbb{E}|u_{i,j}(n)|^2\right)^{\frac{p}{2}} \geq C_1\sum_{n=0}^\infty \left(\sum_{j=1}^m \sqrt{\mathbb{E}|u_{i,j}(n)|^2}\right)^p,\\
    \sum_{n=0}^\infty \left|\sqrt{\mathbb{E}|V_i(n)|^2}\right|^p & =\sum_{n=0}^\infty \left(\sum_{j=1}^m\mathbb{E}|v_{i,j}(n)|^2\right)^{\frac{p}{2}} \geq C_2\sum_{n=0}^\infty \left(\sum_{j=1}^m \sqrt{\mathbb{E}|v_{i,j}(n)|^2}\right)^p,
\end{align*}
with $C_1,C_2>0$. Furthermore as $p\geq1$ and both $\mathbb{E}|u_{i,j}(n)|^2$ and $\mathbb{E}|v_{i,j}(n)|^2$ are non-negative,
\begin{align*}
    \infty > \sum_{n=0}^\infty \left|\sqrt{\mathbb{E}|U_i(n)|^2}\right|^p & \geq C_1\sum_{n=0}^\infty \left(\sum_{j=1}^m \sqrt{\mathbb{E}|u_{i,j}(n)|^2}\right)^p \geq C_1\sum_{n=0}^\infty \sum_{j=1}^m \left(\mathbb{E}|u_{i,j}(n)|^2\right)^{\frac{p}{2}},\\
    \infty > \sum_{n=0}^\infty \left|\sqrt{\mathbb{E}|V_i(n)|^2}\right|^p & \geq C_2\sum_{n=0}^\infty \left(\sum_{j=1}^m \sqrt{\mathbb{E}|v_{i,j}(n)|^2}\right)^p \geq C_2\sum_{n=0}^\infty \sum_{j=1}^m \left(\mathbb{E}|v_{i,j}(n)|^2\right)^{\frac{p}{2}}.
\end{align*}
Thus it must be the case that
\[
\sum_{n=0}^\infty \left(\mathbb{E}|u_{i,j}(n)|^2\right)^{\frac{p}{2}} + \sum_{n=0}^\infty \left(\mathbb{E}|v_{i,j}(n)|^2\right)^{\frac{p}{2}} < \infty,
\]
for each $j \in \{1,\ldots,m\}$. Employing It\^o's isometry for each $j$
\begin{align*}
    \sum_{n=0}^\infty \left(\mathbb{E}|u_{i,j}(n)|^2\right)^{\frac{p}{2}}& = \sum_{n=0}^\infty \left(\int_{2n}^{2n+2}\left[ s \wedge (2n+1)-2n\vee (s-1) \right]^2 \sigma_{ij}^2(s)ds\right)^{\frac{p}{2}} < \infty,\\
    \sum_{n=0}^\infty \left(\mathbb{E}|v_{i,j}(n)|^2\right)^{\frac{p}{2}}& = \sum_{n=0}^\infty \left(\int_{2n+1}^{2n+3}\left[s\wedge(2n+2)-(2n+1)\vee(s-1)\right]^2\sigma_{ij}^2(s)ds\right)^{\frac{p}{2}} < \infty.
\end{align*}
Next we obtain a lower estimate for the summands for both series.
\begin{align*}
    \mathbb{E}|u_{i,j}(n)|^2 & = \int_{2n}^{2n+2}\left[ s \wedge (2n+1)-2n\vee (s-1) \right]^2 \sigma_{ij}^2(s)ds\\
    & = \int_{2n}^{2n+1}(s-2n)^2\sigma_{ij}^2(s)ds+\int_{2n+1}^{2n+2}(2n+2-s)^2\sigma_{ij}^2(s)ds\\
    & \geq \int_{2n+\frac{1}{2}}^{2n+1}\frac{\sigma_{ij}^2(s)}{4}ds+\int_{2n+1}^{2n+\frac{3}{2}}\frac{\sigma_{ij}^2(s)}{4}ds.
\end{align*}
Similarly
\begin{align*}
    \mathbb{E}|v_{i,j}(n)|^2& = \int_{2n+1}^{2n+3}\left[s\wedge(2n+2)-(2n+1)\vee(s-1)\right]^2\sigma_{ij}^2(s)ds\\
    & = \int_{2n+1}^{2n+2}(s-2n-1)^2\sigma_{ij}^2(s)ds+\int_{2n+2}^{2n+3}(2n+3-s)^2\sigma_{ij}^2(s)ds\\
    & \geq \int_{2n+\frac{3}{2}}^{2n+2}\frac{\sigma_{ij}^2(s)}{4}ds+\int_{2n+2}^{2n+\frac{5}{2}}\frac{\sigma_{ij}^2(s)}{4}ds.
\end{align*}
Now consider
\[
\frac{1}{C}\left|\int_{2n}^{2n+2}\sigma_{ij}^2(s)ds\right|^{p/2} \leq \left|\int_{2n}^{2n+\frac{1}{2}}\sigma_{ij}^2(s)ds\right|^{p/2}+\left|\int_{2n+\frac{1}{2}}^{2n+\frac{3}{2}}\sigma_{ij}^2(s)ds\right|^{p/2}+\left|\int_{2n+\frac{3}{2}}^{2n+2}\sigma_{ij}^2(s)ds\right|^{p/2},
\]
where $C>0$. The last two terms on the right hand side are summable by the above estimates on $\mathbb{E}|u_{i,j}(n)|^2$ and $\mathbb{E}|v_{i,j}(n)|^2$. To see that the first term is summable consider
\[
\sum_{n=0}^\infty \left| \int_{2n+2}^{2n+\frac{5}{2}}\sigma_{ij}^2(s)ds \right|^{p/2}.
\]
We know this sum is finite by virtue of the above estimates on $\mathbb{E}|v_{i,j}(n)|^2$. Thus, making the change of indexing variable $n=l-1$ yields
\[
\sum_{l=1}^\infty \left| \int_{2l}^{2l+\frac{1}{2}}\sigma_{ij}^2(s)ds \right|^{p/2}.
\]
Hence
\begin{equation} \tag{$\dagger$}
  \sum_{n=0}^\infty \left|\int_{2n}^{2n+2}\sigma_{ij}^2(s)ds\right|^{p/2} < \infty. 
\end{equation}
Finally,
\begin{align*}
    \int_0^\infty\left|\int_t^{t+1} \sigma_{ij}^2(s)ds\right|^{p/2}dt & = \sum_{n=0}^{\infty}\int_{2n}^{2n+2}\left|\int_t^{t+1} \sigma_{ij}^2(s)ds\right|^{p/2}dt\\
    & \leq \sum_{n=0}^{\infty}\int_{2n}^{2n+2}\left|\int_{2n}^{2n+3} \sigma_{ij}^2(s)ds\right|^{p/2}dt \\
    & = 2 \sum_{n=0}^{\infty}\left|\int_{2n}^{2n+3} \sigma_{ij}^2(s)ds\right|^{p/2}\\
    & \leq C\sum_{n=0}^{\infty}\left|\int_{2n}^{2n+2} \sigma_{ij}^2(s)ds\right|^{p/2} + C\sum_{n=0}^{\infty}\left|\int_{2n+2}^{2n+3} \sigma_{ij}^2(s)ds\right|^{p/2},
\end{align*}
where $C>0$. We have proven the first term is finite which also implies the second is finite after a change of indexing variable. Thus,
\[
\int_\cdot^{\cdot+1} \sigma_{ij}^2(s)ds \in L^{\frac{p}{2}}(\mathbb{R}_+;\mathbb{R}).
\]
We note this is equivalent to $\int_\cdot^{\cdot+\theta} \sigma_{ij}^2(s)ds \in L^{\frac{p}{2}}(\mathbb{R}_+;\mathbb{R})$ for all $\theta>0$. As $i$ and $j$ were arbitrary, this holds for all components of the matrix $\sigma$. Recall
\[
\int_\cdot^{\cdot+\theta} \sigma_{ij}^2(s)ds \in L^{\frac{p}{2}}(\mathbb{R}_+;\mathbb{R}) \text{ for all } \theta>0   \iff  \mathbb{E}\left[\left|\int_0^\cdot e^{-(\cdot-s)}\sigma_{ij}(s)dB_j(s)\right|^p\right] \in L^1(\mathbb{R}_+;\mathbb{R}).
\]
Thus,
\begin{equation} \tag{$\dagger'$}
 \int_0^\infty \left|\int_0^t e^{-(t-s)}\sigma_{ij}(s)dB_j(s)\right|^p dt < \infty \quad a.s.   
\end{equation}
Recall
\[
\int_0^te^{-(t-s)}f_i(s)ds=Y_i(t)-\sum_{j=1}^m \int_0^t e^{-(t-s)}\sigma_{ij}(s)dB_j(s).
\]
As all terms on the right are elements of $L^p(\mathbb{R}_+;\mathbb{R})$ almost surely, it follows the left hand side is also an element of $L^p(\mathbb{R}_+;\mathbb{R})$. But once again by \cite[Theorem 1]{AL:2023(AppliedMathLetters)} this is equivalent to
\[
\int_{\cdot}^{\cdot+\theta} f_i(s)ds \in L^p(\mathbb{R}_+;\mathbb{R}) \text{ for all } \theta >0.
\]
As this holds for all $i \in \{1,\ldots,d\}$, the proof of \textbf{(A)} is complete.\\
\newline
\textbf{(B)}: $(iii) \implies (i).$ Follow the proof from \textbf{(A)} and replace the $L^p$ condition on $\int_t^{t+\theta}\sigma_{ij}^2(s)ds$ with the summability condition $\int_n^{n+1}\sigma_{ij}^2(s)ds \in \ell^{\frac{p}{2}}(\mathbb{N};\mathbb{R})$ and invoke Lemma \ref{lem. pertured ODE in Lp p<1}.\\
\newline
\textbf{(B)}: $(i) \implies (ii).$ Follow the proof from \textbf{(A)}.\\
\newline
\textbf{(B)}: $(ii) \implies (iii).$ Follow the proof from \textbf{(A)} up until $(\dagger)$. Thus,
\[
\sum_{n=0}^\infty \left|\int_n^{n+1}\sigma_{ij}^2(s)ds\right|^\frac{p}{2} < + \infty.
\]
By Lemma \ref{lem. pertured ODE in Lp p<1}
\[
\int_\cdot^{\cdot+1} \sigma_{ij}^2(s)ds \in \ell^{\frac{p}{2}}(\mathbb{N};\mathbb{R})   \iff  \mathbb{E}\left[\left|\int_0^\cdot e^{-(\cdot-s)}\sigma_{ij}(s)dB_j(s)\right|^p\right] \in L^1(\mathbb{R}_+;\mathbb{R}).
\]
Then follow the proof of \textbf{(A)} from $(\dagger')$ onwards.

\end{proof}

\begin{lemma} \label{lem. pertured ODE in Lp p<1}
Let $f$ be a non-negative continuous function and $\beta >0$. Then for $p\in (0,1)$, the following are equivalent:
\begin{itemize}
    \item[(i)] $t \mapsto \int_0^te^{-\beta(t-s)}f(s)ds \in L^p(\mathbb{R}_+;\mathbb{R})$,
    \item[(ii)]$\int_n^{n+1}f(s)ds \in \ell^p(\mathbb{N};\mathbb{R})$. 
\end{itemize}
\end{lemma}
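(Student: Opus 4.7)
The plan is to prove both directions directly, exploiting the subadditivity inequality $(a+b)^p \leq a^p + b^p$ valid for non-negative $a,b$ and $p \in (0,1]$. Write $a_n := \int_n^{n+1} f(s)\,ds$ and $F(t) := \int_0^t e^{-\beta(t-s)} f(s)\,ds$. The easier direction is $(i) \implies (ii)$: for any $t \in [n+1, n+2]$, since $f \geq 0$,
\begin{equation*}
F(t) \geq \int_n^{n+1} e^{-\beta(t-s)} f(s)\,ds \geq e^{-2\beta} a_n,
\end{equation*}
so raising to the $p$-th power and integrating over $[n+1, n+2]$ gives $a_n^p \leq e^{2p\beta} \int_{n+1}^{n+2} F(t)^p \, dt$. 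Summing over $n \geq 0$ and using $(i)$ yields $(ii)$.

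For the main direction $(ii) \implies (i)$, I would fix $t \in [N, N+1]$ and decompose
\begin{equation*}
F(t) = \sum_{k=0}^{N-1} \int_k^{k+1} e^{-\beta(t-s)} f(s)\,ds + \int_N^t e^{-\beta(t-s)} f(s)\,ds.
\end{equation*}
For $s \in [k, k+1]$ with $k \leq N-1$ and $t \in [N, N+1]$, the bound $t - s \geq N - k - 1$ gives the geometric estimate $\int_k^{k+1} e^{-\beta(t-s)} f(s)\,ds \leq e^{-\beta(N-k-1)} a_k$, while the last piece is at most $a_N$. Applying the subadditivity of $x \mapsto x^p$ on $[0,\infty)$ (valid precisely because $p \in (0,1)$) yields
\begin{equation*}
F(t)^p \leq \sum_{k=0}^{N-1} e^{-p\beta(N-k-1)} a_k^p + a_N^p.
\end{equation*}

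Integrating this pointwise bound over $[N, N+1]$, summing over $N \geq 0$, and swapping the order of summation produces
\begin{equation*}
\int_0^\infty F(t)^p\, dt \leq \sum_{N=0}^\infty a_N^p + \sum_{k=0}^\infty a_k^p \sum_{N=k+1}^\infty e^{-p\beta(N-k-1)} = \left(1 + \frac{1}{1 - e^{-p\beta}}\right) \sum_{k=0}^\infty a_k^p,
\end{equation*}
which is finite by $(ii)$. The only mildly delicate point is the subadditivity step: for $p \geq 1$ one would incur a prefactor growing with the number of summands and the argument would fail, so the restriction $p \in (0,1)$ is used essentially exactly where it needs to be, and no further obstacle arises.
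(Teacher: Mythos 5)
Your proof is correct and rests on the same two ingredients as the paper's: the exponential decay of the kernel across unit intervals and the subadditivity $(\sum_i x_i)^p\le\sum_i x_i^p$ for $p\in(0,1)$. The only difference is organisational --- the paper first reduces to showing $v(n)\in\ell^p(\mathbb{Z}_+;\mathbb{R})$ via the difference inequality $v(n+1)\le e^{-\beta}v(n)+\int_n^{n+1}f(s)\,ds$ and then solves it as a discrete convolution, whereas you expand $F(t)$ over unit intervals in one shot and swap the order of summation; unwinding either argument gives essentially the same geometric-series estimate.
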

\begin{proof}[Proof of Lemma \ref{lem. pertured ODE in Lp p<1}]
    $(ii) \implies (i)$: Define $v(t)\coloneqq \int_0^te^{-\beta(t-s)}f(s)ds $. From the definition of $v$, we have for all $t \in [n,n+1]$
    \begin{equation} \label{eq. perturbed ODE alternate representation}
        v(t)=v(n)e^{-\beta(t-n)}+\int_n^te^{-\beta(t-s)}f(s)ds. 
    \end{equation}
    Noting that $v$ is non-negative yields
    \begin{align*}
        v(t) \leq v(n)+\int_n^{n+1}f(s)ds.
    \end{align*}
    Raising both sides to the power of $p$ and integrating
    \[
    \int_n^{n+1}v^{p}(t)dt \leq \int_n^{n+1}\left(v(n)+\int_n^{n+1}f(s)ds\right)^pdt=\left(v(n)+\int_n^{n+1}f(s)ds\right)^p.
    \]
     Summing over $n$ yields
    \[
    \int_0^\infty v^p(t)dt \leq \sum_{n=0}^\infty v^p(n) + \sum_{n=0}^\infty \left(\int_n^{n+1}f(s)ds\right)^p.
    \]
    The second series is finite by assumption. Thus, we need only show $v(n) \in \ell^p(\mathbb{N};\mathbb{R})$. Setting $t=n+1$ in \eqref{eq. perturbed ODE alternate representation} and estimating yields
    \[
    v(n+1) \leq v(n)e^{-\beta}+\int_{n}^{n+1}f(s)ds.
    \]
    Thus,
    \[
    v(n) \leq \sum_{j=1}^ne^{-\beta(n-j)}\left(\int_{j-1}^jf(s)ds\right).
    \]
    Hence
    \[
    v^p(n) \leq \sum_{j=1}^ne^{-\beta p(n-j)}\left(\int_{j-1}^jf(s)ds\right)^p.
    \]
    The right hand side is a convolution of two summable sequences and is thus summable. Hence $v^p(n) \in \ell^1(\mathbb{N};\mathbb{R})$ and we are done.\\

    $(i) \implies (ii)$: Returning once again to \eqref{eq. perturbed ODE alternate representation}, we get the estimate
    \[
    v(t) \geq v(n)e^{-\beta(t-n)} \geq v(n)e^{-\beta}.
    \]
    Thus,
    \[
    \int_n^{n+1}v^p(t)dt \geq v^p(n)e^{-\beta p}.
    \]
    Summing we obtain
    \[
    \infty > \int_0^\infty v^p(t)dt \geq \sum_{n=0}^\infty v^p(n)e^{-\beta p}.
    \]
    Hence $v(n) \in \ell^p(\mathbb{N};\mathbb{R})$. But once again with $t=n+1$ in \eqref{eq. perturbed ODE alternate representation} we obtain
    \begin{align*}
    v(n+1)& =v(n)e^{-\beta}+e^{-\beta(n+1)}\int_n^{n+1}e^{\beta s}f(s)ds\\
    & \geq e^{-\beta}\int_n^{n+1}f(s)ds.
    \end{align*}
    Thus, $\int_n^{n+1}f(s)ds \in \ell^p(\mathbb{N};\mathbb{R})$. The claim is proven.

\end{proof}
The proof of Theorem \ref{thm. L^p theorem continuous SVE} now follows.

\begin{proof}[Proof of Theorem \ref{thm. L^p theorem continuous SVE}]
Theorem \ref{thm. L^p theorem for continuous OU} along with Lemma \ref{lem. L^p of SVE and OU are equivalent} yield the claim.
    
\end{proof}
\subsection{Almost sure asymptotic behaviour} \label{subsec. Almost sure asymptotic behaviour}
When within the regime of $p-$integrable sample paths of solutions to \eqref{eq. continuous SVE}, we show that the asymptotic behaviour of the sample paths is determined by the asymptotic behaviour of the underlying deterministic equation. Throughout this section we use the shorthand notation
\begin{equation} \label{eq. convolution notation}
    (f \ast g)(t)\coloneqq\int_0^tf(t-s)g(s)ds
\end{equation}
to denote the convolution of two functions $f$ and $g$ where the product in the integrand is regular matrix multiplication. If $f$ takes values in $\mathbb{R}^{d \times m}$ and $g$ takes values in $\mathbb{R}^{m \times n}$, then the convolution $f \ast g$ is the $\mathbb{R}^{d \times n}$-valued function with components
\begin{align*}
    (f \ast g)(t)_{i,j}=\int_0^t\langle \textbf{e}_i^\top f(t-s),g(s)\textbf{e}_j\rangle ds.
\end{align*}

\begin{theorem} \label{thm. pathwise behaviour of X}
    Let $p\in[1,\infty)$, $X$ be the solution of \eqref{eq. continuous SVE} and $r \in L^1(\mathbb{R}_+;\mathbb{R}^{d\times d}  )$ be the solution of \eqref{eq. continuous resolvent}. Assume $X \in L^p(\mathbb{R}_+;\mathbb{R}^d)$ almost surely. Then
    \begin{equation} \label{eq. X-r to 0}
        \left\|X(t)-(r \ast f)(t)\right\| \longrightarrow 0 \text{ as } t \to \infty \quad \text{ a.s}. 
    \end{equation}
\end{theorem}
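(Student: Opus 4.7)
The approach rests on the variation-of-constants representation
\[
X(t) = r(t)\xi + (r\ast f)(t) + M(t), \qquad M(t) := \int_0^t r(t-s)\sigma(s)\,dB(s),
\]
from which $X(t)-(r\ast f)(t) = r(t)\xi + M(t)$, so it suffices to show that each summand vanishes almost surely as $t\to\infty$.

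The deterministic term is handled via classical resolvent theory. Differentiating \eqref{eq. continuous resolvent} and using that $\nu$ is a finite measure while $r$ is bounded, $\dot r$ is bounded on $\mathbb{R}_+$; hence $r$ is uniformly continuous. Combined with \eqref{assum. continuous resolvent is L1}, a uniformly continuous integrable function must vanish at infinity, so $r(t)\xi \to 0$ almost surely.

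For the stochastic term my first move is to establish $M\in L^p(\mathbb{R}_+;\mathbb{R}^d)$ almost surely. The hypothesis $X\in L^p$ a.s.\ together with Theorem \ref{thm. L^p theorem continuous SVE} gives \eqref{cond. f} for $f$, from which the deterministic $L^p$ theory of \cite{AL:2023(AppliedMathLetters)} yields $r\ast f\in L^p$. Since $r\in L^1\cap L^\infty \subset L^p$, one has $r\xi\in L^p$ pathwise, so rearranging the decomposition above forces $M\in L^p$ almost surely.

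The main obstacle is upgrading $M\in L^p$ a.s.\ to the pointwise statement $M(t)\to 0$ a.s., since $L^p$ integrability alone does not entail pointwise decay. My plan is a two-step argument that exploits the Gaussianity of $M$. First, by It\^o's isometry and the conditions \eqref{cond. sigma p geq 2}/\eqref{cond. sigma p<2} on $\sigma$, the series $\sum_n \mathbb{E}\|M(n)\|^p$ should be finite, so Borel--Cantelli yields $M(n)\to 0$ a.s.\ along integers. Second, I would decompose the increment
\[
M(t)-M(n) = \int_0^n\bigl[r(t-s)-r(n-s)\bigr]\sigma(s)\,dB(s) + \int_n^t r(t-s)\sigma(s)\,dB(s), \qquad t\in[n,n+1],
\]
and bound the variance of each piece using the uniform continuity of $r$, the $L^2$-bound on $r$, and the sliding-window smallness of $\int_n^{n+1}\sigma_{ij}^2$ inherited from the $\sigma$-conditions; a Gaussian maximal inequality followed by a second Borel--Cantelli then gives $\sup_{n\le t\le n+1}\|M(t)-M(n)\|\to 0$ almost surely. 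Combining both steps delivers the required $M(t)\to 0$ a.s., concluding the proof.
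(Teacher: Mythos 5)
Your overall strategy is sound but genuinely different from the paper's. You work directly with the resolvent stochastic convolution $M(t)=\int_0^t r(t-s)\sigma(s)\,dB(s)$ and try to prove $M\in BC_0$ by controlling $M$ at integers and then its oscillation on $[n,n+1]$. The paper instead introduces the auxiliary Ornstein--Uhlenbeck-type process $dM(t)=-M(t)\,dt+\sigma(t)\,dB(t)$, which has two decisive advantages: (a) on any interval $[a_n,a_{n+1}]$ one has the exact identity $e^tM_i(t)=e^{a_n}M_i(a_n)+\sum_j\int_{a_n}^te^s\sigma_{ij}(s)\,dB_j(s)$, whose stochastic term \emph{is} a martingale in $t$, so Burkholder--Davis--Gundy controls the supremum with no further decomposition; and (b) the points $a_n$ are chosen as minimisers of $t\mapsto\mathbb{E}[|M_i(t)|^p]$ over $[n,n+1]$, so that summability of $\mathbb{E}[|M_i(a_n)|^p]$ follows immediately from integrability of the $p$-th mean, with no need for an It\^o-isometry computation at the integers. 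Having shown $M\in BC_0$ a.s., the paper transfers the conclusion to $X$ through the pathwise deterministic equation for $Z=X-M$ and variation of constants, using only $r\in L^1$, finiteness of $\nu$ and $M\in BC_0$. Your route avoids the auxiliary process but pays for it at exactly the step you identify as "the main obstacle."

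That step, as sketched, has two concrete gaps. First, for the piece $\int_0^n[r(t-s)-r(n-s)]\sigma(s)\,dB(s)$ you propose to use the uniform continuity of $r$; but the resulting variance bound is of the form $\omega(t-n)^2\int_0^n\|\sigma(s)\|^2ds$, and the hypotheses do \emph{not} place $\sigma$ in $L^2(\mathbb{R}_+)$ (only the sliding-window integrals $\int_n^{n+1}\sigma_{ij}^2$ are controlled), so $\int_0^n\|\sigma\|^2$ may diverge and the bound fails. One must instead use $|r(u+\delta)-r(u)|^2\le\delta\int_u^{u+\delta}|\dot r(v)|^2dv$ with $\dot r=\nu\ast r\in L^1\cap L^\infty$, which produces an $L^1$ majorant $\rho$ and reduces the variance to $(\rho\ast\sigma^2)(n)$; a discrete Young-type convolution estimate (an $\ell^1$ sequence convolved with the null/$\ell^{p/2}$ sequence $n\mapsto\int_n^{n+1}\sigma_{ij}^2$) is then needed to show this tends to zero summably. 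Second, the piece $\int_n^t r(t-s)\sigma(s)\,dB(s)$ is \emph{not} a martingale in $t$ (the integrand depends on the upper limit), so neither Doob's inequality nor BDG applies as stated; you would have to peel off $r(0)=I$ and treat the remainder $\int_n^t\bigl(\int_s^t\dot r(u-s)du\bigr)\sigma(s)\,dB(s)$ via stochastic Fubini, or invoke a genuine Gaussian-process maximal inequality with a verified entropy/continuity estimate. Both gaps are repairable, but they are precisely where the work lies, and the paper's choice of the exponential kernel is what makes them disappear.
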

Theorem \ref{thm. pathwise behaviour of X} precisely identifies the almost sure asymptotic behaviour of the sample paths in terms of the resolvent $r$, which could be seen as unsatisfactory. In general within this regime, the asymptotic behaviour of the mapping $t \mapsto (r \ast f)(t)$ cannot easily be determined. If one wishes for solutions to vanish almost surely, it is necessary that $(r \ast f)(t) \to 0$ as $t \to \infty$ which as outlined in \cite{AL:2023(AppliedMathLetters)}, is equivalent to $t \mapsto \int_t^{t+\theta}f(s)ds \to 0$ as $t \to \infty$ for all $\theta >0$. However, it is not evident that this will be the case. In the special case when $X \in L^1(\mathbb{R}_+;\mathbb{R}^d)$ almost surely, one can remove the dependence on the resolvent in \eqref{eq. X-r to 0} and obtain the almost sure asymptotic behaviour of the trajectories purely in terms of the perturbation function $f$.
\begin{corollary} \label{cor. pathwise behaviour of X p=1}
    Let $X$ be the solution of \eqref{eq. continuous SVE} and $r \in L^1(\mathbb{R}_+;\mathbb{R}^{d\times d}  )$ be the solution of \eqref{eq. continuous resolvent}. Assume $X \in L^1(\mathbb{R}_+;\mathbb{R}^d)$ $a.s.$ Then
    \begin{equation}
        \left\|X(t)-\int_0^1\int_{t-u}^tf(s)dsdu\right\| \longrightarrow 0 \text{ as } t \to \infty \quad \text{a.s.},
    \end{equation}
    where $\int_0^1\int_{t-u}^tf(s)dsdu$ is a $d$-dimensional vector whose $i^{th}$ entry is given by $\int_0^1\int_{t-u}^tf_i(s)dsdu$.
    
\end{corollary}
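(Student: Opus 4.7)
Rather than trying to deduce this directly from Theorem \ref{thm. pathwise behaviour of X} (which would require understanding $(r\ast f)(t)$ asymptotically), my plan is to exploit the SDE \eqref{eq. continuous SVE} directly and produce the claimed quantity by averaging the equation over a shrinking horizon. Specifically, I would integrate \eqref{eq. continuous SVE} componentwise from $t-u$ to $t$ for $u \in [0,1]$, then integrate the resulting identity with respect to $u$ over $[0,1]$, for $t \geq 1$. This yields, writing $(\nu\ast X)(s) \coloneqq \int_{[0,s]}\nu(du')X(s-u')$,
\begin{equation*}
X(t) - \int_0^1\int_{t-u}^t f(s)\,ds\,du \;=\; \int_0^1 X(t-u)\,du \;+\; \int_0^1\!\int_{t-u}^t (\nu\ast X)(s)\,ds\,du \;+\; \int_0^1\!\int_{t-u}^t \sigma(s)\,dB(s)\,du,
\end{equation*}
so the task reduces to showing that each of the three terms on the right tends to zero almost surely as $t\to\infty$.

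The first two deterministic-looking terms are handled by soft arguments. A change of variables gives $\int_0^1 X(t-u)\,du = \int_{t-1}^t X(s)\,ds$, which tends to $0$ a.s.\ purely because $X \in L^1(\mathbb{R}_+;\mathbb{R}^d)$ a.s. For the second term, Young's convolution inequality (applied componentwise using $|\nu_{ij}|$) shows that $\nu\ast X \in L^1(\mathbb{R}_+;\mathbb{R}^d)$ a.s., and since $|\int_{t-u}^t (\nu\ast X)(s)\,ds| \leq \int_{t-1}^t \|(\nu\ast X)(s)\|\,ds$ uniformly in $u\in[0,1]$, the same tail argument yields a.s.\ vanishing.

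The genuinely stochastic piece is the main obstacle. I would first apply the stochastic Fubini theorem to swap the order of integration: for each pair $(i,j)$,
\begin{equation*}
\int_0^1\!\int_{t-u}^t \sigma_{ij}(s)\,dB_j(s)\,du \;=\; \int_{t-1}^t (s-t+1)\,\sigma_{ij}(s)\,dB_j(s).
\end{equation*}
Here I need a.s.\ (not just $L^2$) convergence, so I would next observe that the hypothesis $X \in L^1(\mathbb{R}_+;\mathbb{R}^d)$ a.s.\ forces, via Theorem \ref{thm. L^p theorem continuous SVE}(\textbf{B}) with $p=1$, that $n\mapsto \int_n^{n+1}\sigma_{ij}^2(s)\,ds \in \ell^{1/2}(\mathbb{Z}_+;\mathbb{R})$, and since $\ell^{1/2}\subset\ell^{1}$ for non-negative sequences, we get $\sigma_{ij}\in L^2(\mathbb{R}_+;\mathbb{R})$. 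Consequently the martingale $Z_{ij}(t)\coloneqq \int_0^t \sigma_{ij}(s)\,dB_j(s)$ converges a.s.\ to a finite limit $Z_{ij,\infty}$. An application of It\^o's product rule to $(s-t+1)Z_{ij}(s)$ (with $t$ fixed) collapses the weighted stochastic integral to
\begin{equation*}
\int_{t-1}^t (s-t+1)\sigma_{ij}(s)\,dB_j(s) \;=\; Z_{ij}(t) - \int_{t-1}^t Z_{ij}(s)\,ds,
\end{equation*}
and continuity of $Z_{ij}$ together with $Z_{ij}(t)\to Z_{ij,\infty}$ forces the right-hand side to zero a.s., completing the argument. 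The delicate step is really the combination stochastic-Fubini + integration-by-parts + $\sigma\in L^2$, since this is what upgrades the variance-vanishing estimate for the stochastic term into genuine pathwise convergence.
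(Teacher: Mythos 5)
Your proposal is correct, and it takes a genuinely different route from the paper. The paper deduces the corollary from Theorem \ref{thm. pathwise behaviour of X} (so the problem becomes purely deterministic: identify the asymptotics of $(r\ast f)(t)$), and then invokes the splitting $f=f_1+f_2$ with $f_1\in L^1$ and $f_3(\cdot)=\int_0^{\cdot}f_2(s)\,ds\in L^1$ from Lemma 2 of \cite{AL:2023(AppliedMathLetters)}, together with the resolvent identity $(r\ast f)=(r\ast f_1)+f_3+(\nu\ast r\ast f_3)$, to peel off the limit $f_3(t)=\int_0^1\int_{t-u}^tf(s)\,ds\,du$. You instead bypass Theorem \ref{thm. pathwise behaviour of X} and the decomposition lemma entirely by averaging the integrated equation over a unit window; the identity you write down is correct (and your term $\int_0^1 X(t-u)\,du=\int_{t-1}^tX(s)\,ds$, the tail argument for it and for $\nu\ast X\in L^1$ via the finiteness of $|\nu_{ij}|$, and the stochastic-Fubini/integration-by-parts identity $\int_{t-1}^t(s-t+1)\sigma_{ij}(s)\,dB_j(s)=Z_{ij}(t)-\int_{t-1}^tZ_{ij}(s)\,ds$ all check out; in fact the last identity follows even more directly from $\int_{t-u}^t\sigma_{ij}\,dB_j=Z_{ij}(t)-Z_{ij}(t-u)$ and ordinary Fubini in $u$). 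The one place you do lean on the paper's machinery is Theorem \ref{thm. L^p theorem continuous SVE}(\textbf{B}) to extract $\int_{\cdot}^{\cdot+1}\sigma_{ij}^2\in\ell^{1/2}\subset\ell^1$, hence $\sigma_{ij}\in L^2$ and a.s.\ convergence of the martingale $Z_{ij}$; this is a legitimate use of the stated hypotheses (which include $r\in L^1$). What each approach buys: the paper's route is uniform in the sense that it factors through the general $p\geq 1$ statement of Theorem \ref{thm. pathwise behaviour of X} and makes the origin of the averaged limit visible as the $f_3$ of the decomposition; your route is more self-contained, makes transparent \emph{why} the window average of $f$ is the right comparison object, and isolates exactly where $p=1$ is used --- the martingale-convergence step genuinely needs $\ell^{p/2}\subset\ell^1$, i.e.\ $p\leq 2$, and would fail for $p\geq 2$ where $\sigma_{ij}\in L^2$ is not guaranteed, so your argument does not subsume Theorem \ref{thm. pathwise behaviour of X} but is a clean independent proof of this corollary.
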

\begin{proof}[Proof of Corollary \ref{cor. pathwise behaviour of X p=1}]
    By Theorem \ref{thm. pathwise behaviour of X}
    \begin{equation}
        \left\|X(t)-(r \ast f)(t)\right\| \longrightarrow 0 \text{ as } t \to \infty \quad \text{a.s}. 
    \end{equation}
    Hence we need only focus on the asymptotic behaviour of $(r \ast f)(t)$. Theorem \ref{thm. L^p theorem continuous SVE} implies  $\int_{\cdot}^{\cdot+\theta} f_i(s)ds \in L^1(\mathbb{R}_+;\mathbb{R})$ for all $\theta>0$, $i \in \{1,\ldots,d\}$. Hence we invoke Lemma 2 in \cite{AL:2023(AppliedMathLetters)} to claim $f_i=f_{i,1}+f_{i,2}$ where $f_{i,1} \in L^1(\mathbb{R}_+;\mathbb{R})$ and $\int_0^tf_{i,2}(s)ds \in L^1(\mathbb{R}_+;\mathbb{R})$. We write the vector-valued function $f=f_1+f_2$ where $f_1$ and $f_2$ are defined in the obvious way. Now with $f_3(t)\coloneqq \int_0^tf_2(s)ds$ we follow the proof of Theorem 1 from \cite{AL:2023(AppliedMathLetters)} to obtain
    \begin{align} \label{eq. r ast f}
    (r\ast f)(t)& =(r\ast f_1)(t)+f_3(t)+(r' \ast f_3)(t) \nonumber \\ 
    & = (r\ast f_1)(t)+f_3(t)+(\nu \ast r \ast f_3)(t),
    \end{align}
    where the last equality follows from equation \eqref{eq. continuous resolvent}. As $r \in L^1(\mathbb{R}_+;\mathbb{R}^{d \times d})$ we know also that $r \in BC_0(\mathbb{R}_+;\mathbb{R}^{d \times d})$ and so $r \ast f_1 \in BC_0(\mathbb{R}_+;\mathbb{R}^{d})$. Additionally as $\nu \in M(\mathbb{R}_+;\mathbb{R}^{d \times d} )$ and $\nu \ast r \in BC_0(\mathbb{R}_+;\mathbb{R}^{d \times d})$ we have $\nu \ast r \ast f_3 \in BC_0(\mathbb{R}_+;\mathbb{R}^{d})$. Thus,
    \[
    \|X(t)-f_3(t)\| \leq \left\|X(t)-(r \ast f)(t)\right\|+\| (r\ast f_1)(t) \| + \| (\nu \ast r \ast f_3)(t) \|.
    \]
    Hence $\|X(t)-f_3(t)\| \to 0$ as $t \to \infty$ almost surely. The explicit representation of $f_3$ is given in the proof of Lemma 2 in \cite{AL:2023(AppliedMathLetters)}. Namely,
    \[
    f_3(t)= \int_0^1\int_{t-u}^tf(s)dsdu.
    \]
    The claim is proven.
\end{proof}
In the introduction we discussed that often times a strategy to infer information about asymptotic behaviour of solutions is to first prove some kind of integrability result and then infer convergence to zero. We will use Theorem \ref{thm. pathwise behaviour of X} and Corollary \ref{cor. pathwise behaviour of X p=1} to highlight how this line of attack may yield severely suboptimal results for even the simplest equations. We shall momentarily kill the deterministic perturbation $f$ and consider only the scalar equation
\begin{equation} \label{eq. scalar version of Y}
    dY(t)=-Y(t)dt+\sigma(t)dB(t); \quad Y(0)=0.
\end{equation}
Now $\sigma \in C(\mathbb{R}_+;\mathbb{R})$ and $B$ is a scalar Brownian motion. Theorem \ref{thm. pathwise behaviour of X} ensures the condition $n \mapsto \int_n^{n+1}\sigma^2(s)ds \in \ell^{1/2}(\mathbb{N};\mathbb{R})$ is enough to force $|Y(t)| \to 0$ as $t \to \infty$ almost surely. However, it was shown in \cite{ACR:2011(DCDS)} that $Y$ converging to zero almost surely is equivalent to
\begin{equation} \label{eq. S(e)}
    \sum_{n=0}^\infty \sqrt{\int_n^{n+1}\sigma^2(s)ds}\exp\left(\frac{-\varepsilon}{\int_n^{n+1}\sigma^2(s)ds}\right)< \infty,
\end{equation}
for all $\varepsilon >0$. But
\[
 \sum_{n=0}^\infty \sqrt{\int_n^{n+1}\sigma^2(s)ds}\exp\left(\frac{-\varepsilon}{\int_n^{n+1}\sigma^2(s)ds}\right)\leq \sum_{n=0}^\infty \sqrt{\int_n^{n+1}\sigma^2(s)ds},
\]
uniformly in $\varepsilon$. Hence if one thinks about convergence to zero in terms of condition \eqref{eq. S(e)}, $Y$ being almost surely integrable is most definitely sufficient but nowhere near necessary. Reintroducing the deterministic perturbation $f$ makes the situation even worse. Theorem \ref{thm. pathwise behaviour of X} and Corollary \ref{cor. pathwise behaviour of X p=1} demonstrate integrability is no longer sufficient, and in fact one needs to impose side conditions on $f$ to ensure convergence to zero. Next we provide a convergence characterisation in the special case when $\sigma$ is a diagonal matrix.

\begin{theorem} \label{thm. characterising convergence of X a.s. diagonal sigma}
Let $r$ obey \eqref{assum. continuous resolvent is L1}, $X$ be the solution of  \eqref{eq. continuous SVE}, $m=d$, and $\sigma$ be a diagonal matrix. Then the following are equivalent:
\begin{enumerate}
    \item[(i)] Each component of $f$ and $\sigma$ satisfies
    \begin{align} \label{eq. conditions on f sigma for asymptotic convergence}
        \int_t^{t+\theta}f_i(s)ds \overset{t \to \infty}{\longrightarrow} 0 \text{ for all } \theta >0; \quad    \sum_{n=0}^\infty \sqrt{\int_n^{n+1}\sigma_{ii}^2(s)ds}\exp\left(\frac{-\varepsilon}{\int_n^{n+1}\sigma_{ii}^2(s)ds}\right)< \infty
    \end{align}
    for all $\varepsilon >0$,
    \item[(ii)] $\|X(t)\| \longrightarrow 0 $ as $t \to \infty$, almost surely.
\end{enumerate}
\end{theorem}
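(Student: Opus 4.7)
The plan is to reduce the vector Volterra equation to an uncoupled system of scalar Ornstein--Uhlenbeck equations, separate the deterministic and stochastic contributions in each component, and then invoke already available scalar characterisations. Concretely, I would first show that $\|X(t)\|\to 0$ a.s.\ is equivalent to $\|Y(t)\|\to 0$ a.s., where $Y$ solves \eqref{eq. continuous OU process} with the same $f$ and $\sigma$. The two directions are read off from the identities used in the proof of Lemma \ref{lem. L^p of SVE and OU are equivalent}: on one side
\[
X_i(t)-Y_i(t)=\xi_i e^{-t}+\int_0^t e^{-(t-s)}Q_i(s)\,ds,\qquad Q=X+\nu\ast X,
\]
and on the other the pathwise splitting $X=Z+Y$ with $Z_i(t)=\sum_j r_{ij}(t)\xi_j+\sum_j(r_{ij}\ast g_j)(t)$ and $g=Y+\nu\ast Y$. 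Since $\nu$ is a finite signed measure and $r\in L^1(\mathbb{R}_+;\mathbb{R}^{d\times d})\cap BC_0(\mathbb{R}_+;\mathbb{R}^{d\times d})$, in each direction pathwise boundedness together with pointwise vanishing of the driving process passes through dominated convergence against $\nu$, and through the standard fact that an $L^1$ kernel convolved with a $BC_0$ function gives a $BC_0$ function, yielding the equivalence.

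Because $m=d$ and $\sigma$ is diagonal, the components of $Y$ decouple into $dY_i=(f_i-Y_i)dt+\sigma_{ii}\,dB_i$, each depending only on $B_i$, so $\|Y(t)\|\to 0$ a.s.\ is equivalent to $Y_i(t)\to 0$ a.s.\ for every $i$. I would then split
\[
Y_i=D_i+S_i,\qquad D_i(t)=\int_0^t e^{-(t-s)}f_i(s)\,ds,\quad S_i(t)=\int_0^t e^{-(t-s)}\sigma_{ii}(s)\,dB_i(s),
\]
so that $Y_i(t)\sim\mathcal{N}\bigl(D_i(t),v_i(t)\bigr)$ with $v_i(t)=\int_0^t e^{-2(t-s)}\sigma_{ii}^2(s)\,ds$.

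The step I expect to be the main obstacle is showing that $Y_i(t)\to 0$ a.s.\ forces both $D_i(t)\to 0$ and $S_i(t)\to 0$ a.s. Almost-sure convergence gives convergence in distribution to $\delta_0$, and hence pointwise convergence of characteristic functions; taking absolute values first yields $e^{-\theta^2 v_i(t)/2}\to 1$ for all $\theta$, so $v_i(t)\to 0$, and then $e^{i\theta D_i(t)}\to 1$ for all $\theta$ forces the deterministic quantity $D_i(t)\to 0$. Since $D_i$ is non-random, $S_i(t)=Y_i(t)-D_i(t)\to 0$ a.s.\ follows immediately; the reverse implication is trivial. Once this separation is in hand, \cite{AL:2023(AppliedMathLetters)} identifies $D_i(t)\to 0$ with $\int_t^{t+\theta}f_i(s)\,ds\to 0$ for every $\theta>0$, and the result of \cite{ACR:2011(DCDS)} quoted just before the theorem identifies $S_i(t)\to 0$ a.s.\ with the exponential-sum condition on $\sigma_{ii}$ in \eqref{eq. conditions on f sigma for asymptotic convergence}. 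Assembling these reductions yields the claimed equivalence.
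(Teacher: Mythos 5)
Your reduction of the Volterra equation to the Ornstein--Uhlenbeck process $Y$ is exactly the paper's: both directions go through the processes $Z=X-Y$ and $Q=X+\nu\ast X$, using that a finite measure convolved with a $BC_0$ function is $BC_0$ and that the $L^1$ resolvent kernel preserves $BC_0$ under convolution. Where you diverge is the scalar step. The paper treats the componentwise statement ``$Y_i(t)\to 0$ a.s.\ $\iff$ \eqref{eq. conditions on f sigma for asymptotic convergence}'' as a single black box, citing Theorem~11 of \cite{AL:2024(Pantograph)}; you instead re-derive the key separation yourself: writing $Y_i=D_i+S_i$ with $D_i$ deterministic and $S_i\sim\mathcal{N}(0,v_i(t))$, you use almost sure convergence $\Rightarrow$ convergence in law, factor the characteristic function as $e^{i\theta D_i(t)}e^{-\theta^2 v_i(t)/2}\to 1$, extract $v_i(t)\to 0$ from the modulus and then $D_i(t)\to 0$ from L\'evy continuity, and conclude $S_i=Y_i-D_i\to 0$ a.s.; the two pieces are then matched to \cite{AL:2023(AppliedMathLetters)} and \cite{ACR:2011(DCDS)} respectively. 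This argument is correct (the only point worth being explicit about is that $e^{i\theta D_i(t)}\to 1$ for all $\theta$ forces $D_i(t)\to 0$ via weak convergence of $\delta_{D_i(t)}$ to $\delta_0$, which also rules out divergence of $D_i$), and it buys a more self-contained proof that does not lean on the pantograph preprint, at the cost of reproducing in-line what is presumably the substance of that cited theorem. Either route is acceptable; yours makes the logical dependencies of the equivalence more transparent.
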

\begin{proof}[Proof of Theorem \ref{thm. characterising convergence of X a.s. diagonal sigma}]
The process satisfying $dY(t)=(f(t)-Y(t))dt+\sigma(t)dB(t)$ is now just a concatenation of scalar processes in which almost sure convergence to zero for each component is equivalent to $(i)$ \cite[Theorem 11]{AL:2024(Pantograph)}. As in the proof of Lemma \ref{lem. L^p of SVE and OU are equivalent}, consider the process $Z(t)=X(t)-Y(t)$ and use the fact $Y \in BC_0(\mathbb{R}_+;\mathbb{R}^d)\, a.s.$  and $\nu \in M(\mathbb{R}_+;\mathbb{R}^{d \times d})$ to conclude $Z \in BC_0(\mathbb{R}_+;\mathbb{R}^d)$. As $X(t)=Z(t)+Y(t)$ we have $X \in BC_0(\mathbb{R}_+;\mathbb{R}^d)\, a.s.$  and the forward implication is proven. Now we need only show $X \in BC_0(\mathbb{R}_+;\mathbb{R}^d) \quad a.s. $ implies $ Y \in BC_0(\mathbb{R}_+;\mathbb{R}^d) \quad a.s.$, after which we can conclude by applying Theorem 11 in \cite{AL:2024(Pantograph)}. We once again continue as in the proof of Lemma \ref{lem. L^p of SVE and OU are equivalent} and consider     
\[
  dX(t)=\left(f(t)+Q(t)-X(t)\right)dt+\sigma(t)dB(t),
\]
where $Q(t)=X(t)+\int_{[0,t]}\nu(ds)X(t-s)$. As $\nu \in M(\mathbb{R}_+;\mathbb{R}^{d \times d})$ and $X \in BC_0(\mathbb{R}_+;\mathbb{R}^d)$ almost surely, we must have $ Q \in BC_0(\mathbb{R}_+;\mathbb{R}^d)$ almost surely. We obtain for each component
\[
Y_i(t)=X_i(t)-x_i e^{-t}-\int_0^te^{-(t-s)}Q_i(s)ds
\]
from which it is clear $Y_i \in BC_0(\mathbb{R}_+;\mathbb{R})$ almost surely for each component. The theorem is proven.
    
\end{proof}
We conclude this section with a proof of Theorem \ref{thm. pathwise behaviour of X}. We need the following lemma which is proven in the appendix.
\begin{lemma} \label{lem. integrability condition implies summability condition}
Let $f\in L^1_{loc}(\mathbb{R}_+;\mathbb{R}_+)$ and $p \in[1,\infty)$. Assume

\[
\int_0^\infty \left(\int_t^{t+\theta}f(s)ds\right)^p dt < \infty \quad \text{ for all } \theta >0. 
\]
Then for each sequence $(a_n)_{n\geq0}$ with $a_0=0$ and $0<\alpha\leq a_{n+1}-a_n\leq \beta$ where $\alpha,\beta>0
$, we have

\[
\sum_{n=0}^\infty \left(\int_{a_n}^{a_{n+1}}f(s)ds\right)^p < \infty.
\]
    
\end{lemma}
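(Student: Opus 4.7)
The plan is to bound each summand $\bigl(\int_{a_n}^{a_{n+1}}f(s)ds\bigr)^p$ pointwise by a suitable integrand $\bigl(\int_t^{t+\theta}f(s)ds\bigr)^p$ for $t$ ranging over an interval of length at least $\alpha$, and then sum over $n$ to recover the hypothesised $L^p$ integral.

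First I would pick $\theta=2\beta$ and verify the key geometric inclusion: for any $n\geq 1$ and any $t\in[a_{n-1},a_n]$,
\[
[a_n,a_{n+1}] \subseteq [t,t+2\beta].
\]
The left endpoint is clear since $t\leq a_n$, and for the right endpoint one uses $a_{n+1}\leq a_n+\beta$ together with $a_n\leq a_{n-1}+\beta\leq t+\beta$, giving $a_{n+1}\leq t+2\beta$. Since $f\geq 0$ this yields
\[
\int_{a_n}^{a_{n+1}}f(s)\,ds \leq \int_t^{t+2\beta}f(s)\,ds\qquad\text{for all } t\in[a_{n-1},a_n].
\]

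Next I would raise to the $p$-th power and integrate $t$ over $[a_{n-1},a_n]$, whose length is at least $\alpha$ by hypothesis. This yields
\[
\alpha\left(\int_{a_n}^{a_{n+1}}f(s)\,ds\right)^p \leq \int_{a_{n-1}}^{a_n}\left(\int_t^{t+2\beta}f(s)\,ds\right)^p dt.
\]
Summing over $n\geq 1$ and using the disjointness of the intervals $[a_{n-1},a_n]$ together with $a_0=0$ gives
\[
\alpha\sum_{n=1}^{\infty}\left(\int_{a_n}^{a_{n+1}}f(s)\,ds\right)^p \leq \int_0^\infty\left(\int_t^{t+2\beta}f(s)\,ds\right)^p dt,
\]
which is finite by hypothesis (applied with $\theta=2\beta$). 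The $n=0$ term is handled separately: since $a_1\leq\beta$ and $f\in L^1_{loc}$, $\bigl(\int_0^{a_1}f(s)\,ds\bigr)^p<+\infty$ is a single finite quantity, so adding it to the tail preserves summability.

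The only subtle point is choosing $\theta$ large enough that the covering inclusion above holds uniformly in $n$ and $t$; $\theta=2\beta$ does the job precisely because both $a_n-a_{n-1}$ and $a_{n+1}-a_n$ are bounded by $\beta$. There is no real analytic obstacle here, just a careful bookkeeping of the $\alpha$ (from below) and $2\beta$ (from above) scales.
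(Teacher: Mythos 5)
Your proof is correct and is essentially the paper's own argument up to a shift of index: both choose $\theta=2\beta$, bound the $n$-th summand by $\int_t^{t+2\beta}f(s)\,ds$ for $t$ in an adjacent partition interval of length at least $\alpha$, integrate, and sum, with the first term of the series handled trivially via local integrability.
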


\begin{proof}[Proof of Theorem \ref{thm. pathwise behaviour of X}]  Introduce the $\mathbb{R}^d$-valued process with dynamics $dM(t)=-M(t)dt + \sigma(t)dB(t)$ with zero initial condition. By Theorem \ref{thm. L^p theorem continuous SVE} and Lemma \ref{lem. L^p of SVE and OU are equivalent}, $X \in L^p(\mathbb{R}_+;\mathbb{R}^d)$ forces $M \in L^p(\mathbb{R}_+;\mathbb{R}^d)$ and in particular $\mathbb{E}\left[\left|M(t)\right|^p\right] \in L^1(\mathbb{R}_+;\mathbb{R}^d)$. As $t \mapsto \mathbb{E}\left[\left|M_i(t)\right|^p\right]$ is continuous and integrable, for each $i \in \{1,\cdots, d\}$ there exists a sequence $a_n \nearrow \infty $ with $0\leq a_{n+1}-a_n \leq 1$ such that $\sum_{n=0}^\infty \mathbb{E}\left[\left|M_i(a_n)\right|^p\right] < \infty$. To see this note,

    \begin{align*}
        \infty > \int_0^\infty \mathbb{E}&\left[\left|M_j(s)\right|^p\right]ds  \\
        & = \sum_{n=0}^\infty \int_{n}^{n+1} \mathbb{E}\left[\left|M_j(s)\right|^p\right]ds \geq \sum_{n=0}^\infty \inf_{s\in [n,n+1]} \mathbb{E}\left[\left|M_j(s)\right|^p\right] = \sum_{n=0}^\infty \mathbb{E}\left[\left|M_j(a_n)\right|^p\right],
    \end{align*}
    where for each $n \in \mathbb{N}$,
    \[
    a_n \coloneqq \min\left\{ x \in [n,n+1] : \mathbb{E}\left[\left|M_j(x)\right|^p\right] = \inf_{s\in [n,n+1]} \mathbb{E}\left[\left|M_j(s)\right|^p\right]\right\}.
    \]
    The continuity of $t \mapsto \mathbb{E}\left[\left|M_j(t)\right|^p\right]$ ensures the infimum over compact intervals is always attained and by construction $0\leq a_{n+1}-a_n \leq 1$ and $a_n \nearrow \infty $. For $t \in [a_n,a_{n+1}]$ each component satisfies
    \[
    M_i(t)e^{t}=e^{t}M_i(a_n)+ \sum_{j=1}^m\int_{a_n}^te^s\sigma_{ij}(s)dB_j(s).
    \]
    Using standard estimates we obtain the following for some $C>0$,
    \begin{equation} \tag{$\ast$} \label{line before BDG inequality}
          \mathbb{E}\left[\sup_{a_n \leq t \leq a_{n+1}}\left|e^t M_i(t)\right|^p\right] \leq Ce^{pa_{n+1}}\mathbb{E}|M_i(a_n)|^p+C\sum_{j=1}^m \mathbb{E}\left[\sup_{a_n \leq t \leq a_{n+1}}\left|\int_{a_n}^te^s\sigma_{ij}(s)dB_j(s)\right|^p\right].  
    \end{equation}
    We apply the Burkholder-Davis-Gundy Inequality to each term in the finite sum on the right hand side; this yields for some $C>0$,
    \begin{align*}
        \mathbb{E}\left[\sup_{a_n \leq t \leq a_{n+1}}\left|\int_{a_n}^te^s\sigma_{ij}(s)dB_j(s)\right|^p\right] & \leq C \left(\int_{a_n}^{a_{n+1}}e^{2s}\sigma_{ij}^2(s)ds\right)^{p/2}\\
        & \leq Ce^{p(a_{n+1})}\left(\int_{a_n}^{a_{n+1}}\sigma_{ij}^2(s)ds\right)^{p/2}.
    \end{align*}
    Plugging this estimate back into \eqref{line before BDG inequality}, we obtain
    \begin{align*}
        \mathbb{E}\left[\sup_{a_n \leq t \leq a_{n+1}}\left|e^t M_i(t)\right|^p\right] \leq C'e^{p(a_{n+1})}\mathbb{E}|M_i(a_n)|^p+C'e^{p(a_{n+1})}\left(\int_{a_n}^{a_{n+1}}\sigma_{ij}^2(s)ds\right)^{p/2},
    \end{align*}
    for some new constant $C'>0$. Dividing by $e^{p(a_{n+1})}$ on both sides yields
    \begin{align*}
        e^{-p(a_{n+1}-a_n)}\mathbb{E}\left[\sup_{a_n \leq t \leq a_{n+1}}\left|M_i(t)\right|^p\right] \leq C'\mathbb{E}|M_i(a_n)|^p+C'\left(\int_{a_n}^{a_{n+1}}\sigma_{ij}^2(s)ds\right)^{p/2}.
    \end{align*}
    The first term on the right hand side is summable by construction of the sequence $a_n$. To show the second term is summable we distinguish two cases. The first case is $p \in [1,2)$. By the construction of the sequence $a_n$
    \[
    \left(\int_{a_n}^{a_{n+1}}\sigma_{ij}^2(s)ds\right)^{p/2} \leq \left(\int_{n}^{n+1}\sigma_{ij}^2(s)ds+\int_{n+1}^{n+2}\sigma_{ij}^2(s)ds\right)^{p/2}.
    \]
    Once again we use the fact that norms on finite dimensional vector spaces are equivalent to obtain the bound
    \[
    \left(\int_{n}^{n+1}\sigma_{ij}^2(s)ds+\int_{n+1}^{n+2}\sigma_{ij}^2(s)ds\right)^{p/2} \leq C^{1/p}\left(\left(\int_{n}^{n+1}\sigma_{ij}^2(s)ds\right)^{1/2}+\left(\int_{n+1}^{n+2}\sigma_{ij}^2(s)ds\right)^{1/2}\right)^{p},
    \]
    for some $C>0$. As $p\geq 1$, we use a standard estimate to obtain for some new constant $C'>0$
    \[
    \left(\int_{a_n}^{a_{n+1}}\sigma_{ij}^2(s)ds\right)^{p/2} \leq C'\left(\int_{n}^{n+1}\sigma_{ij}^2(s)ds\right)^{p/2}+C'\left(\int_{n+1}^{n+2}\sigma_{ij}^2(s)ds\right)^{p/2}.
    \]
    Both terms on the right are summable by virtue of Theorem \ref{thm. L^p theorem continuous SVE}. For the case when $p\geq 2$ we invoke Theorem \ref{thm. L^p theorem continuous SVE} and apply Lemma \ref{lem. integrability condition implies summability condition}. Thus,
    \begin{align*}
         \infty > \sum_{n=0}^\infty e^{-p(a_{n+1}-a_n)}\mathbb{E}\left[\sup_{a_n \leq t \leq a_{n+1}}\left|M_i(t)\right|^p\right] > e^{-p}\sum_{n=0}^\infty \mathbb{E}\left[\sup_{a_n \leq t \leq a_{n+1}}\left|M_i(t)\right|^p\right].
    \end{align*}
    Because the right hand side is finite, we can apply the Kolmogorov two series test, yielding
    \[
    \sum_{n=0}^\infty \sup_{a_n \leq t \leq a_{n+1}}\left|M_i(t)\right|^p < \infty.
    \]
    Thus, necessarily
    \[
    \sup_{a_n \leq t \leq a_{n+1}}\left|M_i(t)\right|^p \longrightarrow 0 \text{ as } n \to \infty \quad \text{a.s}.
    \]
    This clearly yields $M_i(t) \to 0 $ almost surely for each $i$. As in the proof of Lemma \ref{lem. L^p of SVE and OU are equivalent} we introduce the process $Z(t)=X(t)-M(t)$, which yields
    \[
    \dot{Z}(t)=(\nu \ast Z )(t) + f(t)+(\nu \ast M)(t)+M(t).
    \]
    By variation of constants
    \[
    Z(t)-(r\ast f)(t)=r(t)\xi +(r\ast \nu \ast M)(t)+(r \ast M)(t).
    \]
    As $r \in L^1(\mathbb{R}_+;\mathbb{R}^{d\times d})$, $\nu \in M(\mathbb{R}_+;\mathbb{R}^{d\times d} )$ and $M \in BC_0(\mathbb{R}_+;\mathbb{R}^d)$, the right hand side will converge to the zero vector upon sending $t \to \infty$. Thus,
    \[
    \lim_{t\to \infty} \left\|X(t)-(r\ast f)(t)\right\| \leq \lim_{t\to \infty} \left(\left\|Z(t)-(r\ast f)(t)\right\| + \|M(t)\|\right)=0 \quad \text{a.s}.
    \]
    
\end{proof}

\section{Stochastic Functional Differential Equations} \label{sec. SFDEs}
Once again, consider a complete filtered probability space $(\Omega,\mathcal{F},\mathcal{F}_t,\mathbb{P})$ supporting an $m$-dimensional standard Brownian motion, $(B_t)_{t\geq0}$. We now study the following $\mathbb{R}^d$-valued stochastic functional differential equation (for some fixed delay parameter $\tau >0$) given by
\begin{align} \label{eq. SFDE}
    & dX(t,\psi)  = \left(f(t)+\int_{[-\tau,0]}\mu(ds)X(t+s,\psi)\right)dt+\sigma(t)dB(t), & \quad t \geq 0;\nonumber \\
     & X(t,\psi)  = \psi(t)  & t \in[-\tau,0].
\end{align}
Here $\psi$ is a $C([-\tau,0];\mathbb{R}^d)$-valued $\mathcal{F}_0$-measurable random variable, $f \in C(\mathbb{R}_+;\mathbb{R}^d)$, $\mu \in M([-\tau,0];\mathbb{R}^{d\times d})$ and $\sigma \in C(\mathbb{R}_+;\mathbb{R}^{d\times m})$. We make the standing assumption
\begin{equation} \label{eq. psi has finite sup norm}
\mathbb{E}\left[\sup_{t \in [-\tau,0]}\|\psi(t)\|^2\right]<\infty,
\end{equation}
to ensure there exists a unique solution of \eqref{eq. SFDE}, see \cite[Chapter 5]{Mao:2008(Book)} for details.
As in the previous section, $(B_t)_{t\geq0}$ is a standard  $m$-dimensional Brownian motion where $ \langle B(t),\textbf{e}_i \rangle$ and $ \langle B(t),\textbf{e}_j \rangle$ are independent for all $i\neq j$. All results from Section \ref{sec. Continuous results} extend to the finite memory problem with only minor modifications, and in fact lead to significant improvements. In the context of functional equations, it is possible to remove any auxiliary assumptions on the resolvent, leading to substantially stronger results. We denote by $r_{\tau}$ the differential resolvent of the measure $\mu$ which solves the matrix equation

\begin{equation} \label{eq. functional resolvent}
  r'_{\tau}(t)=\int_{[-\tau,0]}\mu(ds)r_{\tau}(t+s), \quad t>0; \quad r_{\tau}(0)=I_{d\times d}. 
\end{equation}
Unlike in the case of Volterra equations, we do not need to impose the standing assumption that $r_\tau \in L^1(\mathbb{R}_+;\mathbb{R}^{d\times d})$. We note, however, that this is equivalent to $r_\tau$ obeying the estimate $\|r_\tau(t)\| \leq Ce^{-\alpha t}$, for some $C,\alpha>0$ and an arbitrary norm on $\mathbb{R}^{d \times d}$. We state the following lemma whose proof is identical to that of Lemma \ref{lem. L^p of SVE and OU are equivalent}.
\begin{lemma} \label{lem. L^p SFDE and OU are equivalent}
Let $p \in [1,\infty)$, $X$ be the solution of  \eqref{eq. SFDE}, and $Y$ be the solution of \eqref{eq. continuous OU process}. Assume $r_\tau \in L^1(\mathbb{R}_+;\mathbb{R}^{d\times d})$ and $\psi \in L^p(\Omega;C([-\tau,0];\mathbb{R}^d))$. Then the following statements are true:
\begin{itemize}
    \item[(i)] $\mathbb{E}\|X(\cdot,\psi)\|^p \in L^1([-\tau,\infty);\mathbb{R}) \iff \mathbb{E}\|Y(\cdot)\|^p \in L^1(\mathbb{R}_+;\mathbb{R})$,
    \item[(ii)] $X(\cdot,\psi) \in L^p([-\tau,\infty);\mathbb{R}^d)$ almost surely $\iff$ $Y(\cdot) \in L^p(\mathbb{R}_+;\mathbb{R}^d)$ almost surely.
\end{itemize}
\end{lemma}
\begin{remark}
    Lemma \ref{lem. L^p SFDE and OU are equivalent} provides further evidence to the conjecture that memory has no effect on the integrability of trajectories. In particular, equations with vastly different memory structures (namely, \eqref{eq. continuous SVE} and \eqref{eq. SFDE}) are equivalent at the level of $p$-integrable trajectories.
\end{remark}

\begin{theorem} \label{thm. L^p theorem for continuous SFDE}
    Let $p \in [1,\infty)$, $X$ be the solution of \eqref{eq. SFDE}, and $\psi \in L^p(\Omega;C([-\tau,0];\mathbb{R}^d))$. Then the following statements are true:
    \begin{enumerate}
        \item[(\textbf{A})] If $p \in [2,\infty)$, the following are equivalent:
    \begin{itemize}
        \item[(i)] For each initial function $\psi$, $\mathbb{E}\|X(\cdot,\psi)\|^p \in L^1([-\tau,\infty);\mathbb{R})$,
        \item[(ii)] For each initial function $\psi$, $X(\cdot,\psi) \in L^p([-\tau,\infty);\mathbb{R}^d)$ almost surely,
        \item[(iii)]$r_\tau \in L^1(\mathbb{R}_+;\mathbb{R}^{d\times d})$, $f$ obeys \eqref{cond. f} and $\sigma$ obeys \eqref{cond. sigma p geq 2}.
    \end{itemize}

    \item[(\textbf{B})] If $p \in [1,2)$, the following are equivalent:
    \begin{itemize}
        \item[(i)] For each initial function $\psi$,  $\mathbb{E}\|X(\cdot,\psi)\|^p \in L^1([-\tau,\infty);\mathbb{R})$,
        \item[(ii)]For each initial function $\psi$, $X(\cdot,\psi) \in L^p([-\tau,\infty);\mathbb{R}^d)$ almost surely,
        \item[(iii)]$r_\tau \in L^1(\mathbb{R}_+;\mathbb{R}^{d\times d})$, $f$ obeys \eqref{cond. f} and $\sigma$ obeys \eqref{cond. sigma p<2}.
    \end{itemize}
    \end{enumerate}
\end{theorem}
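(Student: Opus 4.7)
The plan is to establish the forward implications (iii)$\Rightarrow$(i)$\Rightarrow$(ii) using the tools already developed in the paper and then to address the more delicate converse (ii)$\Rightarrow$(iii), where the novelty is that $r_\tau \in L^1$ now appears as a \emph{necessary} condition rather than a standing assumption.

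For (iii)$\Rightarrow$(i), assuming $r_\tau \in L^1$, Lemma \ref{lem. L^p SFDE and OU are equivalent} transfers the integrability question from $X(\cdot,\psi)$ to the Ornstein--Uhlenbeck process $Y$ defined in \eqref{eq. continuous OU process}, and Theorem \ref{thm. L^p theorem for continuous OU} shows that the conditions on $f$ and $\sigma$ listed in (iii) are exactly those that force $\mathbb{E}[\|Y(\cdot)\|^p] \in L^1(\mathbb{R}_+;\mathbb{R})$. The step (i)$\Rightarrow$(ii) is a direct Tonelli/Fubini argument, identical in spirit to the corresponding step in Theorem \ref{thm. L^p theorem continuous SVE}.

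For the converse (ii)$\Rightarrow$(iii) the new ingredient is extracting $r_\tau \in L^1$ from the hypothesis. Linearity provides the crux: given two deterministic initial functions $\psi_1, \psi_2 \in C([-\tau,0];\mathbb{R}^d)$, the pathwise difference $Z := X(\cdot,\psi_1) - X(\cdot,\psi_2)$ solves the fully homogeneous deterministic FDE
\[
\dot Z(t) = \int_{[-\tau,0]}\mu(ds)Z(t+s), \quad Z(t) = \psi_1(t) - \psi_2(t) \text{ for } t \in [-\tau,0],
\]
so $Z$ is a deterministic function; since each $X(\cdot,\psi_i)$ lies in $L^p([-\tau,\infty);\mathbb{R}^d)$ almost surely by (ii), $Z$ does also, hence in the ordinary (non-random) sense. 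As $\phi := \psi_1 - \psi_2$ ranges over all of $C([-\tau,0];\mathbb{R}^d)$, every solution of the homogeneous equation belongs to $L^p([-\tau,\infty);\mathbb{R}^d)$. By the classical spectral theory of linear autonomous finite-delay FDEs (Datko's theorem applied to the solution $C_0$-semigroup on $C([-\tau,0];\mathbb{R}^d)$, or equivalently an analysis of the zeros of $z \mapsto \det(zI - \hat\mu(z))$ where $\hat\mu(z) = \int_{[-\tau,0]} e^{zs}\mu(ds)$), this forces all characteristic roots to lie in a half-plane $\{\operatorname{Re} z < -\alpha\}$ for some $\alpha > 0$, which yields the exponential bound $\|r_\tau(t)\| \leq Ce^{-\alpha t}$ and in particular $r_\tau \in L^1(\mathbb{R}_+;\mathbb{R}^{d\times d})$. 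Once $r_\tau \in L^1$ is secured, Lemma \ref{lem. L^p SFDE and OU are equivalent} and Theorem \ref{thm. L^p theorem for continuous OU} supply the required conditions on $f$ and $\sigma$.

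The main obstacle is the deterministic spectral step: it is the single place where the finite-delay hypothesis is truly essential, since the analogous implication (all deterministic solutions in $L^p$ $\Rightarrow r \in L^1$) is known to fail for general Volterra kernels, as flagged in the discussion following Theorem \ref{thm. SVE r L1 converse}. All remaining pieces of the argument are direct adaptations of the Volterra analysis and should require only cosmetic modifications to account for the $[-\tau,0]$ support of $\mu$ and the initial segment $\psi$.
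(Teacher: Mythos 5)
Your proposal is correct and follows essentially the same route as the paper: reduce (iii)$\Rightarrow$(i)$\Rightarrow$(ii) to the Volterra/OU machinery, and for the converse extract $r_\tau\in L^1$ by observing that differences of solutions with distinct deterministic initial segments are deterministic homogeneous solutions lying in $L^p$, which forces $v_0(\mu)<0$ and hence exponential decay of $r_\tau$. The paper implements the spectral step concretely by taking $X(\cdot,\psi)-X(\cdot,0)=x_0(\cdot,\psi)$ and exhibiting a non-$L^p$ eigensolution $e^{\lambda t}v$ for a characteristic root with $\operatorname{Re}\lambda=v_0(\mu)\ge 0$ (via Diekmann, Exercise I.3.8), whereas you invoke Datko's theorem, but these are interchangeable.
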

\begin{remark}
    The reader should note how much stronger Theorem \ref{thm. L^p theorem for continuous SFDE} is compared to Theorems \ref{thm. L^p theorem continuous SVE} and \ref{thm. SVE r L1 converse} for the Volterra equation. For the SFDE, we obtain a significantly cleaner characterisation and a substantially stronger converse regarding the necessity of the condition $r_\tau \in L^1(\mathbb{R}_+;\mathbb{R}^{d\times d})$. A similar phenomenon occurs when analysing the asymptotic behaviour of the paths; see Theorem \ref{thm. SFDE almost sure convergence} below.
\end{remark}

Before proving Theorem \ref{thm. L^p theorem for continuous SFDE} we recall some common notation in the field of functional differential equations. Setting $\sigma=0$ in \eqref{eq. SFDE} yields
\begin{align} \label{eq. Functional x }
\dot{x}(t) =\int_{[-\tau,0]}\mu(du)x(t+u)+f(t), \quad t\geq 0; \quad x(t)=\psi(t), \quad t\leq0,
\end{align}
whose solution is given by
\begin{equation} \label{eq. VOC functional x}
    x(t,\psi)=r_{\tau}(t)\psi(0) + \int_{[-\tau,0]}\mu(ds)\left(\int_s^0r_{\tau}(t+s-u)\psi(u)du\right)+\int_0^t r_{\tau}(t-s)f(s)ds. 
\end{equation}
Let
\begin{equation} \label{eq. x0 functional equation}
    x_0(t,\psi) \coloneqq r_{\tau}(t)\psi(0) + \int_{[-\tau,0]}\mu(ds)\left(\int_s^0r_{\tau}(t+s-u)\psi(u)du\right),
\end{equation}
so we can rewrite \eqref{eq. VOC functional x} as $ x(t,\psi)=x_0(t,\psi)+(r_\tau \ast f)(t)$. Thus, $x_0$ solves equation \eqref{eq. Functional x } with the perturbation term switched off. The so-called characteristic matrix is given by $\Delta(\lambda)=\lambda I_{d \times d} -  \int_{[-\tau,0]} \mu(ds)e^{\lambda s}$ wherein the asymptotic behaviour of $r_\tau$ is governed by the roots of the characteristic equation $\text{det} \Delta(\lambda)=0$. Let $\Lambda = \{ \lambda \in \mathbb{C} : \text{det} \Delta(\lambda)=0 \}$ and define $v_0(\mu) \coloneqq \sup \{ \text{Re}(\lambda): \lambda \in \Lambda\}$ with $\Lambda' = \{ \lambda \in \Lambda : \text{Re}(\lambda) = v_0(\mu) \}$.
 \begin{proof}[Proof of Theorem \ref{thm. L^p theorem for continuous SFDE}]
     An argument identical to that in the proof of Theorem \ref{thm. L^p theorem continuous SVE} gives $(iii) \implies (i)$ for both (\textbf{A}) and (\textbf{B}). In order to show $(i) \implies (ii)$ we need to show that $(i) \implies r_\tau \in L^1(\mathbb{R}_+;\mathbb{R}^{d\times d})$ to which we could then follow the line of proof once again from Theorem \ref{thm. L^p theorem continuous SVE}. Thus we assume $(i)$ and aim to show $r_\tau \in L^1(\mathbb{R}_+;\mathbb{R}^{d\times d})$. First we note that for an arbitrary deterministic $\psi$ we have $X(t,\psi)-X(t,0)=x_0(t,\psi)$ and hence
     \[
     \int_0^\infty\mathbb{E}\|x_0(t,\psi)\|^pdt \leq C\int_0^\infty\mathbb{E}\|X(t,0)\|^pdt+ C\int_0^\infty\mathbb{E}\|X(t,\psi)\|^pdt<\infty,
     \]
     for some constant $C>0$. As $\psi$ is deterministic we must have $x_0(\cdot,\psi) \in L^p([-\tau,\infty);\mathbb{R}^d)$. Let $\lambda \in \Lambda'$, then by exercise I.3.8 in \cite{Diekmann} we have for some $v \in \mathbb{R}^d\backslash \{0\}$, $x_0(t,\psi)=\exp({\text{Re}(\lambda)t})  v$. If $v_0(\mu)\geq 0$ this contradicts $x_0(\cdot,\psi) \in L^p([-\tau,\infty);\mathbb{R}^d)$ and so we must have $v_0(\mu)< 0$, but this yields the exponential estimate $\|r_\tau(t)\| \leq Ce^{-\alpha t}$, for some $C,\alpha>0$ and so $r_\tau \in L^1(\mathbb{R}_+;\mathbb{R}^{d\times d})$ as required. Thus, we follow the proof of Theorem \ref{thm. L^p theorem continuous SVE} verbatim to claim $(i) \implies (ii)$ for both (\textbf{A}) and (\textbf{B}). In the same manner one can show $(ii) \implies r_\tau \in L^1(\mathbb{R}_+;\mathbb{R}^{d\times d})$ and once again follow the proof of Theorem \ref{thm. L^p theorem continuous SVE} to claim $(ii) \implies (iii)$ for both (\textbf{A}) and (\textbf{B}).
 \end{proof}
 Our final theorem regarding SFDEs is concerned with almost sure convergence to zero in the regime where $\sigma$ is diagonal and once again, we improve significantly on the Volterra case by removing all assumptions on the resolvent. We note that a version of Theorem \ref{thm. pathwise behaviour of X} also holds for the functional equation but we do not state this as a theorem.

 \begin{theorem} \label{thm. SFDE almost sure convergence}
Let $\psi$ obey \eqref{eq. psi has finite sup norm}, $X$ be the solution of  \eqref{eq. continuous SVE}, $m=d$, and $\sigma$ be a diagonal matrix. Then the following are equivalent:
\begin{itemize}
%     \item[(\textbf{A})]The following are equivalent,
% \begin{enumerate} 

    \item[(i)] $r_\tau \in L^1(\mathbb{R}_+;\mathbb{R}^{d\times d})$, each component of $f$ and $\sigma$ satisfy \eqref{eq. conditions on f sigma for asymptotic convergence},
    \item[(ii)] For each initial function $\psi$, $\|X(t,\psi)\| \longrightarrow 0 $ as $t \to \infty$, almost surely.

\end{itemize}
\end{theorem}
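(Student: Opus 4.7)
The plan is to mirror the structure of the proof of Theorem \ref{thm. characterising convergence of X a.s. diagonal sigma}, reducing the almost-sure convergence of $X$ to that of the auxiliary OU-type process $Y$ from \eqref{eq. continuous OU process}. The one essential new ingredient (absent in the Volterra setting) is that $r_\tau \in L^1(\mathbb{R}_+;\mathbb{R}^{d\times d})$ is no longer a standing assumption, so I will need to recover it from the hypothesis (ii) before the Volterra-style argument can be applied.

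For $(i)\Rightarrow(ii)$: because $m=d$ and $\sigma$ is diagonal, $Y$ decouples into $d$ independent scalar OU processes, each of which converges to zero almost surely by Theorem 11 of \cite{AL:2024(Pantograph)} together with \eqref{eq. conditions on f sigma for asymptotic convergence}. Hence $Y \in BC_0(\mathbb{R}_+;\mathbb{R}^d)$ a.s. Setting $Z(t) \coloneqq X(t,\psi) - Y(t)$, a pathwise computation shows that $Z$ solves the deterministic functional equation $\dot Z(t) = \int_{[-\tau,0]}\mu(ds) Z(t+s) + g(t)$ with forcing $g(t) = Y(t) + \int_{[-\tau,0]}\mu(ds)Y(t+s)$; the finiteness of $\mu$ gives $g \in BC_0$ a.s., and since (i) supplies $r_\tau \in L^1$, variation of constants yields $Z \in BC_0$ a.s. Combining with $Y \in BC_0$ a.s. gives $\|X(t,\psi)\| \to 0$ a.s.

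For $(ii)\Rightarrow(i)$, the first task is to derive $r_\tau \in L^1$ by exploiting linearity with additive noise. For any deterministic $\psi$ the difference $X(t,\psi) - X(t,0) = x_0(t,\psi)$ (cf. \eqref{eq. x0 functional equation}) is itself deterministic, and by (ii) both summands tend to zero a.s., so $x_0(t,\psi) \to 0$ pointwise for every deterministic $\psi \in C([-\tau,0];\mathbb{R}^d)$. If $v_0(\mu) \geq 0$, I select $\lambda \in \Lambda'$ and, exactly as in the proof of Theorem \ref{thm. L^p theorem for continuous SFDE}, appeal to the spectral construction from Exercise I.3.8 of \cite{Diekmann} to produce a deterministic $\psi$ for which $x_0(\cdot,\psi)$ does not decay, a contradiction. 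Hence $v_0(\mu) < 0$, yielding the exponential bound $\|r_\tau(t)\| \leq Ce^{-\alpha t}$ and thus $r_\tau \in L^1$. With this in hand, I replay the argument of Theorem \ref{thm. characterising convergence of X a.s. diagonal sigma}: introduce $Q(t) = X(t,\psi) + \int_{[-\tau,0]}\mu(ds)X(t+s,\psi)$, note that $Q \in BC_0$ a.s. since $\mu$ is finite, rewrite each component as $Y_i(t) = X_i(t) - \psi_i(0) e^{-t} - \int_0^t e^{-(t-s)}Q_i(s)\,ds$, and conclude $Y \in BC_0$ a.s. A componentwise application of Theorem 11 of \cite{AL:2024(Pantograph)} then delivers \eqref{eq. conditions on f sigma for asymptotic convergence}.

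The main obstacle will be the preliminary step of extracting $r_\tau \in L^1$ in the reverse implication: this cannot be done by direct analysis of the noisy paths and relies on the observation that linearity together with additive noise makes the $\psi$-dependence of $X$ deterministic, so that the spectral obstruction used in the $L^p$-theorem for SFDEs transfers cleanly to the pointwise convergence setting. Once this step is secured, everything else is a routine adaptation of the Volterra proof.
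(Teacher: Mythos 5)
Your proposal is correct and follows essentially the same route as the paper, which proves $(i)\Rightarrow(ii)$ by adapting Theorem \ref{thm. characterising convergence of X a.s. diagonal sigma}, and proves $(ii)\Rightarrow(i)$ by first extracting $r_\tau \in L^1(\mathbb{R}_+;\mathbb{R}^{d\times d})$ via the deterministic difference $X(t,\psi)-X(t,0)=x_0(t,\psi)$ and the spectral obstruction from the proof of Theorem \ref{thm. L^p theorem for continuous SFDE}, before replaying the Volterra argument. You in fact supply more detail than the paper's own (very terse) proof, and the details you give are accurate.
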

\begin{proof}[Proof of Theorem \ref{thm. SFDE almost sure convergence}]
The proof of $(i) \implies (ii)$ follows directly from Theorem \ref{thm. characterising convergence of X a.s. diagonal sigma} with only trivial modifications.  For the reverse implication, first one needs to show $(ii) \implies r_\tau \in L^1(\mathbb{R}_+;\mathbb{R}^{d\times d})$, which follows from identical arguments as in the proof of Theorem \ref{thm. L^p theorem for continuous SFDE}. Thus we can once again (with only trivial modifications) follow the proof of Theorem \ref{thm. characterising convergence of X a.s. diagonal sigma} to yield the claim. 
\end{proof}

\section{Examples} \label{sec. Examples}
This section is devoted to providing explicit examples of ill-behaved perturbation functions which highlight the utility of the theory presented. This dispels any thought that the conditions \eqref{cond. f} and \eqref{cond. sigma p geq 2} are superfluous and in fact just equivalent to imposing that $f,\sigma^2 \in L^p$ in the appropriate sense. Indeed, this is not the case. All functions presented in this section will be scalar-valued. In \cite{AL:2023(AppliedMathLetters)} the authors provided the example
\[
f(t)=e^{\alpha t}\sin(e^{\beta t}),
\]
where $0< \alpha < \beta$ and show $\int_\cdot^{\cdot+\theta}f(s)ds \in L^p(\mathbb{R}_+;\mathbb{R})$ despite the fact that $\int_0^\infty|f(s)|^pds=\infty$ for any $p\geq1.$ This takes care of the deterministic perturbation, however, for the stochastic perturbation we are concerned with $\sigma^2$ rather than $\sigma$ itself. Thus, positivity means such oscillatory functions as above cannot be considered. Instead, one must think about functions that have very large deviations over very small intervals, we construct such a function below.

For each $n\in \mathbb{N}$, let $a_n<1/2$ and $h_n$ be positive sequences and suppose that $g(t)=0$ for $[n,n+a_n]$ and $[n+1-a_n, n+1]$ and on $[n+a_n,n+1/2]$, $g$ is linear with $g(n+a_n)=0$ 
and $g(n+1/2)=h_n$, while on $[n+1/2,n+1-a_n]$, $g$ is linear with $g(n+1-a_n)=0$. Then $g$ is continuous, has a spike of width $1-2a_n$ and maximal height $h_n$. An explicit example is
\[
g(t) \coloneqq 
\begin{cases}
    0,& t \in [0,2],\\
    0,& t \in [n,n+a_n],\\
    \frac{h_n}{(\frac{1}{2}-a_n)} t-\frac{h_n(n+a_n)}{(\frac{1}{2}-a_n)},& t \in [n+a_n,n+\frac{1}{2}],\\
    \frac{-h_n}{(\frac{1}{2}-a_n)} t+\frac{h_n(n+1-a_n)}{(\frac{1}{2}-a_n)},& t \in [n+\frac{1}{2},n+1-a_n],\\
    0,& t \in [n+1-a_n,n+1], \\
\end{cases}
\]
$n\geq2,$ with $h_n \coloneqq n^\beta$ and $a_n \coloneqq \frac{1}{2}-\frac{1}{n^{\beta +1}}$, where $\beta>0$.

The following proposition shows this function has the desired behaviour.
\begin{prop} \label{prop. g spikey example}
    Let $g: \mathbb{R}_+\to \mathbb{R}_+$ be defined as above, then for each $p \in (1,\infty)$ we have:
    \begin{align*}
        \int_0^\infty\left|\int_t^{t+1}g(s)ds\right|^pdt < \infty; \qquad  \int_0^\infty |g(s)|^pds = \infty.
    \end{align*}
\end{prop}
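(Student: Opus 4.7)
The plan is to exploit the explicit triangular structure of each spike and reduce both integrals to elementary sums in $n$. First I would observe that on $[n,n+1]$ (for $n\geq 2$) the graph of $g$ is an isoceles triangle with base $2(1/2-a_n)=2/n^{\beta+1}$ and peak height $h_n=n^{\beta}$, so by the area-of-a-triangle formula
\[
\int_n^{n+1} g(s)\,ds \;=\; \tfrac{1}{2}\cdot \tfrac{2}{n^{\beta+1}}\cdot n^{\beta} \;=\; \tfrac{1}{n}.
\]

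For the first claim, since each spike is supported strictly inside $[n,n+1]$, for any $t\in[n,n+1]$ the window $[t,t+1]$ meets only the $n$-th and $(n+1)$-st spikes, giving the pointwise estimate
\[
\int_t^{t+1} g(s)\,ds \;\leq\; \int_n^{n+2} g(s)\,ds \;=\; \frac{1}{n}+\frac{1}{n+1} \;\leq\; \frac{2}{n}.
\]
Raising to the $p$-th power and summing,
\[
\int_0^\infty \left|\int_t^{t+1} g(s)\,ds\right|^p dt \;\leq\; C + \sum_{n\geq 2} \left(\tfrac{2}{n}\right)^p,
\]
which is finite since $p>1$.

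For the second claim I would compute $\int_n^{n+1} g(s)^p\,ds$ from the explicit linear form of $g$. Setting $L_n := 1/2-a_n = 1/n^{\beta+1}$, a change of variables on the ascending leg $[n+a_n,n+1/2]$ gives
\[
\int_{n+a_n}^{n+1/2} g(s)^p\,ds \;=\; \int_0^{L_n} \!\left(\tfrac{h_n}{L_n}u\right)^p du \;=\; \frac{h_n^p L_n}{p+1}.
\]
By symmetry the descending leg contributes the same, so
\[
\int_n^{n+1} g(s)^p\,ds \;=\; \frac{2\,h_n^p L_n}{p+1} \;=\; \frac{2}{p+1}\, n^{\beta(p-1)-1}.
\]
Since $p>1$ and $\beta>0$ imply $\beta(p-1)-1 > -1$, the series $\sum_n n^{\beta(p-1)-1}$ diverges, whence $\int_0^\infty g(s)^p\,ds = +\infty$.

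There is no real obstacle; the calculation is entirely elementary. The only point requiring a little care is checking that the windowed average $\int_t^{t+1} g\,ds$ is controlled \emph{uniformly in} $t$, but this is immediate from the fact that each spike is contained in its own unit interval of integers, so a sliding window never captures more than two consecutive spike-masses $1/n$ and $1/(n+1)$.
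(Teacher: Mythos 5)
Your proof is correct. The first half (finiteness of $\int_0^\infty|\int_t^{t+1}g|^p\,dt$) is essentially identical to the paper's: both bound the sliding-window integral over $t\in[n,n+1]$ by $\int_n^{n+2}g = \tfrac1n+\tfrac1{n+1}$ and sum the resulting $O(n^{-p})$ terms. For the divergence of $\int_0^\infty g^p$, however, you take a genuinely different and arguably cleaner route: you compute $\int_n^{n+1}g(s)^p\,ds$ exactly from the linear legs of the triangle, obtaining $\frac{2}{p+1}h_n^pL_n=\frac{2}{p+1}n^{\beta(p-1)-1}$, and note the exponent exceeds $-1$ because $\beta>0$ and $p>1$. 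The paper instead avoids integrating the $p$-th power directly: it uses $g^p\geq g$ on the set $\{g\geq 1\}$ (valid since $p>1$), computes $\int_n^{n+1}g\,\chi_{\{g\geq1\}}\,ds\sim \tfrac1n$, and compares with the harmonic series. Your exact computation buys the precise divergence rate (showing the blow-up actually worsens as $p$ grows) and would even cover $p=1$ for that half of the claim, while the paper's lower bound sidesteps the explicit antiderivative of $g^p$ at the cost of giving only a $1/n$ lower bound. Both arguments are elementary and complete; there is no gap in yours.
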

\begin{proof}[Proof]
    We calculate the integral of $g$ over the interval $[n,n+1]$ for some fixed $n\geq2$. By definition
    \begin{align*}
       \int_{n}^{n+1}g(s)ds& = \int_{n+a_n}^{n+\frac{1}{2}}\frac{h_n}{(\frac{1}{2}-a_n)} s-\frac{h_n(n+a_n)}{(\frac{1}{2}-a_n)}ds+\int_{n+\frac{1}{2}}^{n+1-a_n}\frac{-h_n}{(\frac{1}{2}-a_n)} s+\frac{h_n(n+1-a_n)}{(\frac{1}{2}-a_n)}ds\\
       & = \frac{h_n(n+\frac{1}{2})^2}{2(\frac{1}{2}-a_n)} -\frac{h_n(n+a_n)(n+\frac{1}{2})}{(\frac{1}{2}-a_n)}+\frac{h_n(n+a_n)^2}{2(\frac{1}{2}-a_n)} \\
       & \quad +\frac{h_n(n+1-a_n)^2}{2(\frac{1}{2}-a_n)} +\frac{h_n(n+\frac{1}{2})^2}{2(\frac{1}{2}-a_n)} -\frac{h_n(n+1-a_n)(n+\frac{1}{2})}{(\frac{1}{2}-a_n)}\\
       & = \frac{h_n}{2(\frac{1}{2}-a_n)}\left((n+\frac{1}{2}-n-a_n)^2+(n+1-a_n-n-\frac{1}{2})^2\right)\\
       & = h_n(\frac{1}{2}-a_n).
    \end{align*}
    Alternatively, the integral over $[n,n+1]$ is exactly the area of a triangle of height $h_n$ and base $1-2a_n$. Hence the familiar formula for the area of a triangle yields $h_n(\frac{1}{2}-a_n)$, confirming the above calculations. Recall, $h_n=n^{\beta}$ and $a_n=\frac{1}{2}-\frac{1}{n^{\beta+1}}$. Thus,
    \begin{align*}
        \int_{n}^{n+1}g(s)ds= h_n(\frac{1}{2}-a_n)=\frac{n^{\beta}}{n^{\beta+1}}=\frac{1}{n}.
    \end{align*}
    % One can easily verify that
    % \[
    % \int_{n}^{n+1}g(s)ds=\frac{1}{n}.
    % \]
    % To see this, 
    Next, consider
    \begin{align*}
        \int_0^\infty\left|\int_t^{t+1}g(s)ds\right|^pdt & = \sum_{n=2}^\infty\int_{n}^{n+1}\left(\int_{t}^{t+1}g(s)ds\right)^pdt\\
        & \leq \sum_{n=2}^\infty\int_{n}^{n+1}\left(\int_{n}^{n+2}g(s)ds\right)^pdt\\
        & = \sum_{n=2}^\infty \left(\int_{n}^{n+1}g(s)ds+\int_{n+1}^{n+2}g(s)ds\right)^p\\
        & \leq C_p \left(\sum_{n=2}^{\infty}\frac{1}{n^p}+\frac{1}{(n+1)^p}\right) < \infty,
    \end{align*}
where $C_p>0$ is a constant depending on $p$. Hence the first assertion is proven. For the second assertion consider the estimation
\begin{align*}
    \int_0^{\infty}g(s)^pds \geq \sum_{n=1}^{\infty}\int_{n}^{n+1}g(s)^p \chi_{\{g(s) \geq 1\}}(s)ds.
\end{align*}
Thus, we need only show the series on the right is divergent and then we are done. As $p >1$ we have
\[
\int_{n}^{n+1}g(s)^p \chi_{\{g(s) \geq 1\}}(s)ds \geq \int_{n}^{n+1}g(s) \chi_{\{g(s) \geq 1\}}(s)ds.
\]
As $\beta >0 $ we have $\limsup_{t\to \infty}g(t)=\infty$, moreover the running maximum is non-decreasing and hence there exists an $m \in \mathbb{N}$ such that $\sup_{t \in [n,n+1]}g(t)>1$ for all $n\geq m$. Fix such an $m$ (recall $g$ is continuous, so the supremum is always attained). One need not evaluate the above integral directly but rather observe it is exactly $\frac{1}{n}$ minus the area of two identical triangles of height $1$ and base $\frac{1}{n^{2\beta+1}}$ (because the slope of $g(t)$ for $t \in [n+a_n,n+\frac{1}{2}]$ is $n^{2\beta+1}$). Hence for $n\geq m$
\[
\int_{n}^{n+1}g(s) \chi_{\{g(s) \geq 1\}}(s)ds= \frac{1}{n}-\frac{1}{n^{2\beta+1}}.
\]
Because $\beta>0$, the above is not a summable sequence. Thus,
\[
\int_0^{\infty}g(s)^pds\geq \sum_{n=m}^{\infty}\int_{n}^{n+1}g(s)^p \chi_{\{g(s) \geq 1\}}(s)ds \geq \sum_{n=m}^{\infty}\left(\frac{1}{n}-\frac{1}{n^{2\beta+1}}\right) = \infty,
\]
as required.
\end{proof}
Consider the following scalar equation
\[
dX(t)=\left(e^{\alpha t}\sin(e^{\beta t})+\int_{[0,t]}\nu(ds)X(t-s)\right)dt+\sigma(t)dB(t),
\]
with $\sigma(t) \coloneqq \sqrt{g(t)}$ where $g(t)$ is defined as in Proposition \ref{prop. g spikey example}. Theorem \ref{thm. L^p theorem continuous SVE} implies $X \in L^{p}(\mathbb{R}_+;\mathbb{R})$ almost surely despite the fact that $\int_{0}^\infty|f(s)|^pds=\infty$ and $\int_0^\infty |\sigma^2(s)|^{p/2}ds=\infty$ for any $p\geq 2$.
% \begin{wrapfigure}{2}{0.5\linewidth}
%     \includegraphics[width=\linewidth]{desmos-graph (4).png}
%     \caption{Graph of $g(t)/1.7^{\beta}$ with $\beta=1.001$}
%     \label{fig:enter-label}
% \end{wrapfigure}

% \begin{figure}
%     \centering
%     \includegraphics[width=0.5\linewidth]{desmos-graph (4).png}
%     \caption{Graph of $g(t)/1.7^{\beta}$ with $\beta=1.001$}
%     \label{fig:enter-label}
% \end{figure}

\section{Conclusion} \label{sec. Conclusions}
In this article, we provided characterisations of when perturbed integro-differential Volterra equations with additive noise admit almost surely $p$-integrable trajectories, when the associated $p$-th mean process belongs to $L^1(\mathbb{R}_+;\mathbb{R}_+)$, and when solutions to the analogous discrete-time equations are almost surely $p$-summable.

The main driver of these results is the connection between the Volterra equation \eqref{eq. continuous SVE} and the associated SDE \eqref{eq. continuous OU process}. The results of this paper suggest that the qualitative (and possibly quantitative) behaviour of such Volterra equations is equivalent to that of a considerably simpler SDE, and we conjecture that this equivalence may extend to the identification of the top Lyapunov exponent.

In addition to integrability properties, we also established almost sure convergence to zero in various regimes and provided a characterisation of such convergence in a special case. As outlined in Section \ref{sec. SFDEs}, the relationship between Volterra equations and SFDEs is strong. Furthermore, when passing to the finite memory setting, one obtains substantially stronger results than in the Volterra case.

\begin{appendices}

\section{Appendix}

\begin{proof}[Proof of Proposition \ref{prop. examples of random variables}]
    Suppose the distribution of $\xi$ has two isolated atoms at $\alpha$ and $\beta$; then we can find epsilon balls such that $\mathbb{E}\xi \chi_{\{\xi \in B_\varepsilon(\alpha)\}}=\alpha \mathbb{P}(\xi =\alpha)$ and $\mathbb{E}\xi \chi_{\{\xi \in B_\varepsilon(\beta)\}}=\beta \mathbb{P}(\xi =\beta)$. Assume
    \[
    \mathbb{P}(\xi =\alpha) \mathbb{E}\xi \chi_{\{\xi \in B_\varepsilon(\beta)\}}= \mathbb{P}(\xi =\beta)  \mathbb{E}\xi \chi_{\{\xi \in B_\varepsilon(\alpha)\}}.
    \]
    Clearly this forces $\beta=\alpha$ which is a contradiction. Thus, we take the two bounded Borel sets to be the epsilon balls defined above and we have $\xi \in \mathbb{D}$. Now assume the distribution of $\xi$ has an absolutely continuous part, and denote its density by $f$. Fix an arbitrary non-zero $t$ in the interior of the support of $f$ and choose $a<b$ such that $\mathbb{P}(\xi \in (a,t))=\mathbb{P}(\xi \in (t,b))$\footnote{This can always be done due to the continuity of the distribution function restricted to the support of $f$.}. Assume
    \[
    \mathbb{P}(\xi \in (a,t))   \mathbb{E}\xi \chi_{\{\xi \in (t,b)\}}= \mathbb{P}(\xi \in (t,b))  \mathbb{E}\xi \chi_{\{\xi \in (a,t)\}}, 
    \]
    which yields
    \[
    \int_t^b x  f(x)dx=\int_a^tx  f(x)dx.
    \]
    A simple estimate yields
    \[
    t\int_t^bf(x)dx\leq \int_a^txf(x)dx.
    \]
    But by construction, we must have
    \[
    t\int_a^tf(x)dx\leq \int_a^txf(x)dx.
    \]
    This yields the inequality
    \[
    0\leq \int_a^t(t-x)f(x)dx\leq 0.
    \]
    Thus, the integrand must be zero. But this is impossible, hence we have the desired contradiction. Thus, we choose the Borel sets to be $(a,t)$ and $(t,b)$ which gives $\xi \in \mathbb{D}$ as required.
    
\end{proof}
\begin{proof}[Proof of Lemma \ref{lem. integrability condition implies summability condition} ]
Let $\theta=2\beta$. For all $t \in [a_n,a_{n+1}]$ we have $[a_{n+1},a_n+2\beta] \subset [t,t+2\beta]$. Thus,
\begin{align*}
    \infty > \int_0^\infty \left(\int_t^{t+2\beta}f(s)ds\right)^p dt & = \sum_{n=0}^\infty \int_{a_n}^{a_{n+1}}\left(\int_t^{t+2\beta}f(s)ds\right)^p dt\\
    & \geq \sum_{n=0}^\infty \int_{a_n}^{a_{n+1}}\left(\int_{a_{n+1}}^{a_{n}+2\beta}f(s)ds\right)^p dt\\
    & =\sum_{n=0}^\infty (a_{n+1}-a_n)\left(\int_{a_{n+1}}^{a_{n}+2\beta}f(s)ds\right)^p\\
    & \geq \alpha \sum_{n=0}^\infty \left(\int_{a_{n+1}}^{a_{n}+2\beta}f(s)ds\right)^p\\
    & \geq \alpha \sum_{n=0}^\infty \left(\int_{a_{n+1}}^{a_{n+2}}f(s)ds\right)^p,
\end{align*}
where the last inequality comes from the fact $a_{n+2}\leq \beta+a_{n+1}\leq 2\beta + a_{n}$.
    
\end{proof}
\end{appendices}
\bibliographystyle{unsrt}
\bibliography{SS_Characterisation_SVE_additive_noise_Lp}

\end{document}